\documentclass[reqno]{amsart}
\usepackage{amsmath}
\usepackage{amsthm}
\usepackage{amssymb}
\usepackage{stmaryrd}
\usepackage{afterpage}
\usepackage{rotating}    
\usepackage{adjustbox}   
\usepackage{booktabs}
\usepackage[utf8]{inputenc}
\usepackage[T1]{fontenc}
\usepackage{lmodern}
\usepackage{eucal}
\pdfoutput=1 
\usepackage{microtype}
\usepackage{url}
\usepackage{enumitem}
\usepackage{tikz}
\usetikzlibrary{arrows.meta}
\usepackage{tikz-cd}
\usepackage{bbm}
\usepackage{longtable,booktabs}
\usepackage[all]{xy}
\usepackage{leftindex}
\usepackage{float}

\usepackage[letterpaper, left = 1.5in, right = 1.5in, top = 1.5in, bottom = 1.5in]{geometry}

\usepackage{pdflscape}

\usepackage{hyperref}

\usepackage{xcolor}
\hypersetup{
    colorlinks,
    linkcolor={red!50!black},
    citecolor={blue!50!black},
    urlcolor={blue!80!black}
}
\definecolor{chartgray}{gray}{0.4}
\definecolor{darkcyan}{rgb}{0, 0.7, 0.7}
\definecolor{darkblue}{rgb}{0, 0.65, 0}
\definecolor{truemagenta}{rgb}{1, 0, 1}

\usepackage{graphicx}

\usepackage{lipsum}     
\usepackage{booktabs} 
\usepackage{rotating}
\usepackage{tabularx}

\usepackage[colorinlistoftodos]{todonotes}

\usepackage{aliascnt}
\usepackage[capitalise,nameinlink]{cleveref}
\crefname{section}{Section}{Sections}
\crefname{subsection}{Subsection}{Subsections}

\theoremstyle{definition}
\newtheorem{theorem}{Theorem}[section]

\newaliascnt{prop}{theorem}
\newtheorem{prop}[prop]{Proposition}
\aliascntresetthe{prop}

\crefname{prop}{Proposition}{Propositions}
\Crefname{prop}{Proposition}{Propositions}

\newaliascnt{lemma}{theorem}
\newtheorem{lemma}[lemma]{Lemma}
\aliascntresetthe{lemma}

\crefname{lemma}{Lemma}{Lemmas}
\Crefname{lemma}{Lemma}{Lemmas}

\newaliascnt{cor}{theorem}
\newtheorem{cor}[cor]{Corollary}
\aliascntresetthe{cor}

\crefname{cor}{Corollary}{Corollaries}
\Crefname{cor}{Corollary}{Corollaries}

\newaliascnt{ex}{theorem}
\newtheorem{ex}[ex]{Example}
\aliascntresetthe{ex}

\crefname{ex}{Example}{Examples}
\Crefname{ex}{Example}{Examples}

\newtheorem{rmk}[theorem]{Remark}

\newtheorem{conjecture}[theorem]{Conjecture}

\newtheorem*{rmk*}{Remark}
\newtheorem*{ex*}{Example}
\newtheorem*{theorem*}{Theorem}
\newtheorem*{defn*}{Definition}
\newtheorem*{question*}{Question}
\newtheorem{thmx}{Theorem}

\crefformat{questionmanual}{#2Question#3}
\Crefformat{questionmanual}{#2Question#3}

\newcommand{\bfu}{\mathbf{u}}
\newcommand{\bfv}{\mathbf{v}}

\newcommand{\bbZ}{\mathbb{Z}}

\newcommand{\bbF}{\mathbb{F}}

\newcommand{\bbR}{\mathbb{R}}
\newcommand{\bbS}{\mathbb{S}}

\newcommand{\bbC}{\mathbb{C}}

\newcommand{\calA}{\mathcal{A}}

\newcommand{\Indet}{\mathrm{Indet}}

\newcommand{\Fib}{\operatorname{Fib}}

\newcommand{\Ext}{\operatorname{Ext}}
\newcommand{\Cof}{\operatorname{Cof}}

\newcommand{\im}{\operatorname{im}}

\newcommand\xqed[1]{%
  \leavevmode\unskip\penalty9999 \hbox{}\nobreak\hfill
  \quad\hbox{#1}}
\newcommand\tqed{\xqed{$\triangleleft$}}

\makeatletter
\DeclareRobustCommand{\tvdots}{%
  \vbox{\baselineskip4\p@\lineskiplimit\bbZ@\kern0\p@\hbox{.}\hbox{.}\hbox{.}}}
\makeatother

\makeatletter
\@namedef{subjclassname@2020}{\textup{2020} Mathematics Subject Classification}
\makeatother

\makeatletter
\newcommand{\raisemath}[1]{\mathpalette{\raisem@th{#1}}}
\newcommand{\raisem@th}[3]{\raisebox{#1}{$#2#3$}}
\makeatother

\begin{document}

\title[A Hurewicz theorem for equivariant homology by vector fields on spheres]{A Hurewicz theorem for $RO(C_2)$-graded equivariant homology governed by vector fields on spheres}

\author[M.~Guo]{Manyi Guo}
\address{Department of Mathematics, University of Washington, Seattle, WA 98195, USA}
\email{manyiguo@uw.edu}

\author[G.~Li]{Guchuan Li}
\address{School of Mathematical Sciences, Peking University, Beijing 100871, China}
\email{liguchuan@math.pku.edu.cn}

\author[Y.~Lu]{Yunze Lu}
\address{Shanghai Center for Mathematical Sciences, Fudan University, Shanghai, China}
\email{luyunze@fudan.edu.cn}

\author[S.~Ma]{Sihao Ma}
\address{Department of Mathematics, UCLA, Los Angeles, CA 90095-1555, USA}
\email{masihao@math.ucla.edu}

\author[Y.~Wu]{Yuchen Wu}
\address{Department of Mathematics, University of California San Diego, La Jolla, CA 92093, USA}
\email{yuw181@ucsd.edu}

\author[Z.~Xu]{Zhouli Xu}
\address{Department of Mathematics, UCLA, Los Angeles, CA 90095-1555, USA}
\email{xuzhouli@ucla.edu}

\author[A. J.~Yang]{Albert Jinghui Yang}
\address{Department of Mathematics, University of Pennsylvania, Philadelphia, PA 19104, USA}
\email{yangjh@sas.upenn.edu}

\author[S.~Zhang]{Shangjie Zhang}
\address{Department of Mathematics, University of California San Diego, La Jolla, CA 92093, USA}
\email{shz046@ucsd.edu}

\begin{abstract}
  We determine the $RO(C_2)$-graded Hurewicz images of the $C_2$-equivariant Eilenberg--MacLane spectra $H\underline{\mathbb F_2}$, $H\underline{\mathbb Z}$ and $H\underline{A}$, where $\underline{\mathbb F_2}$ and $\underline{\mathbb Z}$ denote the constant Mackey functors with values in $\mathbb F_2$ and $\mathbb Z$, respectively, and $\underline A$ denotes the Burnside Mackey functor. 
  
  Surprisingly, the answer is closely tied to the problem of vector fields on spheres \cite{adams1962vector}: the element $\frac{\theta}{\rho^k\tau^n}$ in the negative cone of the homotopy groups of $H\underline{\mathbb F_2}$ lies in the Hurewicz image if and only if $S^n$ admits $k$ linearly independent vector fields.

  Moreover, using the Generalized Leibniz Rule and the Generalized Mahowald Trick introduced by \cite{linwangxu2025lastkervaire}, we show that there are nonzero Adams differentials of arbitrary length supported by filtration-$0$ elements in the genuine $C_2$-equivariant Adams spectral sequence.
\end{abstract}

\maketitle

\tableofcontents

\section{Introduction}

In equivariant homotopy theory, Mackey functors $\underline{M}$ play a fundamental role analogous to that of abelian groups in classical algebraic topology. For a finite group $G$, they provide the natural coefficient systems for equivariant homology, and the associated Eilenberg--MacLane spectra $H\underline{M}$ represent homology theories graded not merely by integers, but by the \emph{real representation ring} $RO(G)$. This representation grading of $H\underline{M}$ is one of the key features of equivariant homotopy theory and is essential in the proof of the Gap Theorem and Periodicity Theorem of Hill--Hopkins--Ravenel's solution to the Kervaire invariant one problem \cite{hillhopkinsravenel2016nonexistence}. 

Given a multiplicative homology theory $E$, a natural and fundamental problem is to determine the image of the Hurewicz map
\[\iota_E \colon \pi_* S^0 \longrightarrow \pi_* E,\] 
or equivalently, which maps between spheres induce nontrivial maps on $E$-homology. In the $G$-equivariant setting, one similarly studies the $RO(G)$-graded Hurewicz map
\[\iota_{E'} \colon \pi^{G}_{\star} S^0 \longrightarrow \pi^{G}_{\star} E',\]
which encodes the $G$-equivariant maps between \emph{representation spheres} detected by $E'$. One of the objectives of this paper is to answer the following natural question:

\begin{question*}\phantomsection
\label[questionmanual]{mainquestion}
    Let $C_2$ be the cyclic group of order 2. What are the Hurewicz images of the $C_2$-equivariant Eilenberg--MacLane spectra $H\underline{M}$, where $\underline{M} = \underline{\mathbb F_2}, \  \underline{\mathbb Z}, \  \underline{A}$?
    \tqed
\end{question*}

Here $\underline{\mathbb F_2}$ and $\underline{\mathbb Z}$ are the constant Mackey functors with values in $\mathbb F_2$ and $\mathbb Z$, respectively, and $\underline A$ denotes the Burnside Mackey functor.

We begin with the case $\underline{\mathbb F_2}$. Recall from \cite{hukriz2001real} that the $RO(C_2)$-graded homotopy groups of $H\underline{\mathbb F_2}$ are given by
 \[\pi_\star^{C_2}H\underline{\bbF_2} =\bbF_2[\rho, \tau]\oplus NC, \qquad NC = \bigoplus_{k, n\geq 0}\bbF_2\left\{\frac{\theta}{\rho^k\tau^n}\right\}.\]
The summand $NC$ is commonly referred to as the \emph{negative cone}. 

To our great surprise, the equivariant \cref{mainquestion} is governed by the classical problem in differential topology that concerns the maximal number of linearly independent vector fields on a sphere. Building on lower bounds given by Clifford algebras \cite{eckmann1942gruppen}, Adams completely resolved this problem using stable homotopy theory \cite{adams1962vector}. Our first main result of this paper is that this classical geometric result appears naturally, and in an exact form, in the computation of the $RO(C_2)$-graded Hurewicz image of $H\underline{\bbF_2}$. 

\begin{thmx}\label{thmA}
    The element $\frac{\theta}{\rho^k\tau^n}$ lies in the Hurewicz image of $H\underline{\bbF_2}$ if and only if $S^n$ admits $k$ linearly independent vector fields.
    \tqed
\end{thmx}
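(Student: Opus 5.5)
The strategy is to translate the Hurewicz question, through the isotropy separation cofiber sequence, into a question about the top cell of a stunted real projective space. Then I would invoke the classical reformulation of the vector field problem due to James and Adams: $S^n$ admits $k$ linearly independent vector fields if and only if the stunted projective space $\mathbb{RP}^{n}/\mathbb{RP}^{n-k-1}$ is coreducible, i.e.\ its top cell splits off stably.

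First, I would pin down the degree of $\frac{\theta}{\rho^k\tau^n}$. It is some $V=a+b\sigma$ with $a\le 0$, and with $b$ depending linearly on $k$ and $n$ (with Hu--Kriz conventions, roughly $a=-(n+2)$ and $b=n+k+2$). I would then describe $\pi^{C_2}_V H\underline{\bbF_2}$ and $\pi^{C_2}_V S^0$ using the cofiber sequence
\[
S(m\sigma)_+\longrightarrow S^0\longrightarrow S^{m\sigma},
\]
with $m$ chosen large relative to $b$, together with its colimit $EC_{2+}\to S^0\to \widetilde{EC_2}$.

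In the negative-cone range, $\widetilde{EC_2}\wedge H\underline{\bbF_2}$ contributes nothing. Consequently $\frac{\theta}{\rho^k\tau^n}$ is the image of a class in the homotopy of the Borel part $(EC_{2+}\wedge H\underline{\bbF_2})$, in the appropriate degree. Via the Adams isomorphism and the Thom spectrum identification $(S(m\sigma)_+\wedge S^{a+b\sigma})_{hC_2}\simeq \Sigma^{a}\mathbb{RP}^{b+m-1}_{b}$ (a Thom spectrum of a multiple of the tautological line bundle), this Borel-part class corresponds to a mod $2$ homology class of a stunted projective space. The parameters $\rho^k$ and $\tau^n$ record which cell is the bottom cell and which the top cell, so $\theta/\rho^k\tau^n$ corresponds to the fundamental class of the top cell of $\mathbb{RP}^{n}_{n-k}$, up to suspension. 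On the sphere side, the same sequence identifies the relevant part of $\pi^{C_2}_V S^0$ with stable cohomotopy/homotopy of the same stunted projective space. The Hurewicz map then becomes the ordinary mod $2$ Hurewicz map, evaluated on the top cell.

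Second, I would argue that the top-cell class lies in the image of stable homotopy precisely when there is a stable map from a sphere onto the top cell with degree one, i.e.\ when the top cell splits off. Here I may need to dualize: Atiyah duality turns reducibility of $\mathbb{RP}^{n+k}_{n}$-type spaces into coreducibility of $\mathbb{RP}^{n}_{n-k}$. Combining this with the James/Adams theorem identifies the condition with $S^n$ admitting $k$ vector fields, i.e.\ with $k\le\rho(n+1)-1$ for the Radon--Hurwitz number $\rho(n+1)$.

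The main obstacle is the careful bookkeeping in the identification. One must verify that the negative-cone class corresponds exactly to the top cell and not to some other cell, and that the indexing, including the off-by-one between $k$ and $n$, matches the vector field condition. One must also check that the connecting maps do not introduce indeterminacy, i.e.\ that any preimage on the sphere side really factors through the Borel part. I would attack this by first computing small cases ($k=0,1$, where the condition is respectively vacuous and equivalent to $n$ odd) to calibrate the conventions, and then running the general argument.
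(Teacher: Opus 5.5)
Your strategy works, and it is a genuinely different route from the paper's main proof, though one step of your bookkeeping is wrong as written.

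\textbf{Comparison of routes.} The paper argues inside Adams spectral sequences. It computes the $0$-line of $\Ext_{C_2}$ by the $\rho$-Bockstein spectral sequence. It then uses Ma's genuine/Borel/classical comparison to identify $\frac{\theta}{\rho^k\tau^n}$ with $1[-(n+2)]$ in the Adams spectral sequence of $\bbR P^\infty_{-(n+k+2)}$. Survival is decided by Adams's attaching-map results plus James periodicity (\cref{prop:RPattachingmap}). Its second, geometric proof uses Hurwitz--Radon quadratic maps for existence, and Hauschild's suspension theorem plus orbit spaces for the converse.

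Your isotropy-separation argument is essentially the homotopy-level shadow of Ma's comparison, and it needs no spectral sequences. Put $V=-(n+2)+(n+k+2)\sigma$. Then $\pi^{C_2}_V\widetilde{EC_2}=\pi_{-(n+2)}\bbS=0$ and $\pi^{C_2}_{V+1}\widetilde{EC_2}=\pi_{-(n+1)}\bbS=0$, and $\Phi^{C_2}H\underline{\bbF_2}$ is connective. Hence for both $E=S^0$ and $E=H\underline{\bbF_2}$ the map $\pi^{C_2}_V(EC_{2+}\wedge E)\to\pi^{C_2}_V E$ is an isomorphism. Via the Adams isomorphism, the Hurewicz map in this degree is then exactly the classical map $\pi_{-(n+2)}\bbR P^\infty_{-(n+k+2)}\to H_{-(n+2)}(\bbR P^\infty_{-(n+k+2)};\bbF_2)\cong\bbF_2$.

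This removes your indeterminacy worry. It also settles ``which cell'', since the target group is one-dimensional. What your route does not give is the finer information the paper needs later: the $0$-line of $\Ext_{C_2}$ and the Adams differentials on it, which feed Theorem~B.

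\textbf{The bookkeeping error.} Cellular approximation compresses the relevant maps into $\bbR P^{-(n+2)}_{-(n+k+2)}$, so the class is the top cell of \emph{that} spectrum. It is not a suspension of $\bbR P^n_{n-k}$ in general. For example, when $n=k=2$, $Sq^2\colon H^{-6}\to H^{-4}$ is nonzero on $\bbR P^{-4}_{-6}$, whereas $Sq^2$ kills the bottom class of $\bbR P^2_0$.

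The correct finish is Spanier--Whitehead duality. The dual of $\bbR P^{-(n+2)}_{-(n+k+2)}$ is $\Sigma\bbR P^{n+k+1}_{n+1}$, the Thom spectrum of $(n+1)\xi$ over $\bbR P^k$. Splitting of its bottom cell means $J((n+1)\xi)=0$ in $\widetilde J(\bbR P^k)$, i.e.\ $k\le\psi(n+1)-1$ by Adams. This is the vector-field condition, and it is exactly where the paper's geometric converse (\cref{thm:geoproof}) lands. Alternatively, apply James periodicity and \cref{prop:RPattachingmap} directly.

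Your stated criterion happens to be equivalent anyway. The dual of $\bbR P^n_{n-k}$ is a suspension of the Thom spectrum of $-(n+1)\xi$ over $\bbR P^k$, and $J(-(n+1)\xi)=0$ if and only if $J((n+1)\xi)=0$. So your final answer comes out right, but not through any suspension equivalence. Also, a complex whose top cell splits off is usually called \emph{reducible}, not coreducible.
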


A complete description of the Hurewicz image of $H\underline{\bbF_2}$ is given in \cref{mainthm} and \cref{fig:hurewiczim}.

We provide two proofs for \cref{thmA}. The first, given in \cref{sec:geoconstr}, is a direct geometric construction of explicit unstable $C_2$-equivariant maps between representation spheres. These maps are detected by $H\underline{\mathbb F_2}$-homology and cannot be further desuspended.

The second proof is more closely tied to the computational theme of the paper. It uses the recent result of the fourth author \cite{ma2026borel}, which establishes a systematic correspondence among the genuine $C_2$-equivariant Adams spectral sequence, the Borel $C_2$-equivariant Adams spectral sequence, and the classical Adams spectral sequences of stunted real projective spectra. From this perspective, \cref{thmA} characterizes the permanent cycles of filtration 0 in the negative cone of the \emph{genuine $C_2$-equivariant Adams spectral sequence}.

Determining the filtration-$0$ permanent cycles naturally leads to a finer analysis of the remaining classes that do not survive on the $0$-line. Our second main theorem proves the existence of genuine $C_2$-equivariant Adams differentials of arbitrary length supported by such classes.

\begin{thmx}\label{thmB}
    For \emph{any} $r\geq 2$, there exist nontrivial $d_r$-differentials supported by filtration-$0$ elements in the genuine $C_2$-equivariant Adams spectral sequence.
    \tqed
\end{thmx}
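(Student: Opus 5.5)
The plan is to work inside the negative cone. Take a class $\frac{\theta}{\rho^k\tau^n}$ with $k$ larger than the vector field number of $S^n$. By \cref{thmA} this class is not a permanent cycle, so it supports some differential $d_r$. The task is to show that, as $(k,n)$ vary, the resulting length $r$ is unbounded; we then also have to realize every particular $r\ge 2$.

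\textbf{Step 1: translate to projective spaces.} Using the correspondence of \cite{ma2026borel}, the filtration-$0$ class $\frac{\theta}{\rho^k\tau^n}$ corresponds to the bottom class of a stunted projective spectrum $\mathbb{RP}^{n+k}_{n}$, up to a shift or Spanier--Whitehead dual. Its Adams differentials in the genuine $C_2$-equivariant Adams spectral sequence match those of the bottom cell in the classical Adams spectral sequence for that stunted projective spectrum. The question then becomes a classical one: when is the bottom cell of $\mathbb{RP}^{n+k}_{n}$ (or its dual) split off, and what is the precise Adams differential recording its attaching map?

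\textbf{Step 2: locate the differentials.} Take $n+1 = 2^j m$ with $m$ odd. Once $k$ exceeds the Radon--Hurwitz number, the top cell attaches nontrivially to the bottom cell, and the obstruction is detected by an element of $\operatorname{Im} J$ in the appropriate stem. I would pick $k$ just one more than the vector field number, so that the obstruction is a generator of the image of $J$, such as $\eta$, $\nu$, $\sigma$, or the $v_1$-periodic families $P^i h_1$, $P^i h_2$, and so on.

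\textbf{Step 3: compute the lengths.} Compute the Adams filtration of this obstruction class. Image-of-$J$ elements in stem $8i-1$ have filtration growing roughly like $4i$. Hence the target of the differential, and therefore the length $r$, grows linearly with $j$.

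\textbf{Step 4: fill in every $r$.} To obtain every $r \ge 2$ and not just an unbounded set, I would use the Generalized Leibniz Rule and the Generalized Mahowald Trick of \cite{linwangxu2025lastkervaire}. Multiplying a source by $\rho$, or by suitable powers of $\tau$, and tracking how the differential's target changes under these multiplications shifts the length by controlled amounts. Together with the $v_1$-periodic families (for example $8k+1$ and $8k+3$ stems with intermediate filtrations), this should cover each $r$.

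\textbf{Main obstacle.} The hardest step is proving that the differential is exactly $d_r$ and not shorter: the target must be nonzero on the $E_r$-page and must not be hit earlier. For this I would first try the Generalized Mahowald Trick. It compares the $d_r$ in the stunted projective spectrum with a known Adams differential, or with an algebraic Atiyah--Hirzebruch/cell-filtration computation in $\Ext$, such as the Mahowald--Lin description of $\Ext$ for $\mathbb{RP}^\infty_{-\infty}$. That comparison should show the detecting $\Ext$-class survives to $E_r$ and is the image of the bottom cell's attaching obstruction.
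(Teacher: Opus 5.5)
Your route matches the paper's for the long differentials. You pass through \cite{ma2026borel} to the class $1[b]$, $b=-n-2$, in the classical Adams spectral sequence of $\bbR P^\infty_{b-c}$ with $c=\psi(n+1)$, and you use that the $b$-cell attaches to the $(b-c)$-cell by an image-of-$J$ generator $j_{c-1}$. But Step 3, where the theorem actually lives, is wrong as stated, because the length of the differential is not the Adams filtration of the attaching map.

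Naturality along $\Sigma^{b-c}\Cof(j_{c-1})\hookrightarrow\bbR P^\infty_{b-c}$ gives only $r\ge m$, where $m$ is the Adams filtration of $j_{c-1}$. The differential can never get shorter. So the real danger is the opposite of your ``main obstacle'': it can get longer, and sometimes it does.
\begin{itemize}
\item For $v_2(n+1)=5$, $j_9=\eta^2\sigma$ has filtration $3$, but the differential is $d_4$. The relation $\nu^3=\eta^2\sigma+\eta\epsilon$ and an Atiyah--Hirzebruch $d_4$ let one replace the attaching map by $\eta\epsilon$.
\item For $v_2(n+1)=4k+3$, the target $a_{8k+7}[b-c]=h_0a'_{8k+7}[b-c]$ is killed by the algebraic Atiyah--Hirzebruch $d_1$ from the $(b-c+1)$-cell, which is attached by degree $2$. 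The true differential is $d_{4k+2}(1[b])=h_2P^kh_2[b-c+1]$, strictly longer than $m=4k+1-v_2(k+1)$.
\end{itemize}
The second case needs a genuinely new classical input: the differential $d_{v_2(k+1)+2}(a'_{8k+7})=h_2P^kh_2$ (\cref{diffofimj}, proved via the vanishing line for $\Cof(2)$ and a synthetic lift). This is fed into the Generalized Mahowald Trick for the $3$-cell complex $T$, followed by the Generalized Leibniz Rule (\cref{lem:v2=4k+3}).

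Even where $r=m$ does hold (residues $0,1,2 \pmod 4$ with $v_2(n+1)\ge 6$), you still need Lambda-complex and algebraic Atiyah--Hirzebruch computations. These show that the target survives in $\Ext(\bbR P^\infty_{b-c})$, or that $\Ext$ vanishes above it.

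Your examples $\eta,\nu,\sigma$ produce no Adams differentials at all. For $v_2(n+1)\le 3$ one has $\psi(n+1)=2^{v_2(n+1)}$, and the class already dies in the $\rho$-Bockstein spectral sequence. Adams differentials on the $0$-line begin only at $v_2(n+1)=4$.

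Done correctly, the length is exactly $r=v_2(n+1)-1$ for every $v_2(n+1)\ge 5$ (\cref{longestdiff}). So every $r\ge 4$ occurs directly, and nothing needs ``filling in''. Your filtration count instead skips, e.g., $r=6,10,14$ and wrongly predicts a $d_3$ at $v_2(n+1)=5$.

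Step 4 could not repair this anyway. There is no $\tau$-multiplication on the genuine spectral sequence: no positive power of $\tau$ survives the $\rho$-Bockstein spectral sequence, since for instance $d_1(\tau)=\rho h_0$. The Leibniz rule for $\rho$ only shows that the length is weakly decreasing in $k$ within a fixed coweight, not that every intermediate value is attained.

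The small cases need their own arguments.
\begin{itemize}
\item $d_2$ does occur at $k=\psi(16)=9$, $n=15$, but its proof rests on Lin's machine computation $d_2(1[-17])=h_1h_3[-26]$ (\cref{ex:d2}).
\item $d_3$ never occurs at $k=\psi(n+1)$, since the lengths there are $2$ and then $\ge 4$. It must come from deeper in a coweight. The paper proves it for $\frac{\theta}{\rho^{32}\tau^{63}}$ (where $\psi(64)=12$) by a separate Generalized Mahowald Trick argument along $\bbR P^{-98}_{-99}\to\bbR P^\infty_{-99}\to\bbR P^\infty_{-97}$ (\cref{ex:d3}).
\end{itemize}
So the tools you list in Step 4 are the right ones. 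They are needed in Step 3 for $v_2(n+1)\equiv 3\pmod 4$ and for this isolated $d_3$, not for shifting lengths by multiplication.
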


As a general structural result about the equivariant Adams spectral sequence, \cref{thmB} is surprising for at least four reasons.
\begin{enumerate}
    \item Classically, the $0$-line of the Adams spectral sequence of the sphere consists only of the unit, which is a permanent cycle and detects the identity map. In contrast, in the genuine $C_2$-equivariant Adams spectral sequence of the sphere, filtration-$0$ classes already support nontrivial differentials. This phenomenon is not observed in simpler $C_2$-spectra, such as $ko_{C_2}$ \cite{guillouhillisaksenravenel2020cohomology}.

    \item Evidence suggests that classical Adams differentials supported by elements of a fixed filtration (at least within every $Sq^0$-family) tend to have uniformly bounded length \cite{adams1960nonexistence,lishiwangxu2019hurewicz, lili2026adams}. The analogous behavior breaks down $C_2$-equivariantly: \cref{thmB} gives genuine $C_2$-equivariant Adams differentials of arbitrary length already at the lowest filtration.

    \item Arbitrarily long Adams differentials also occur in classical settings, but the proven cases are at odd primes with sources of increasing Adams filtrations \cite{andrews2015v1}. In \cref{cor:arblongdiffofass}, we show the analogous statement in the 2-primary classical Adams spectral sequence. By contrast, \cref{thmB} is different in nature: the differentials of increasing length have their sources all lie in filtration $0$.

    \item The most extensive existing computation of the genuine $C_2$-equivariant Adams spectral sequence due to Guillou--Isaksen \cite{guillouisaksen2024c2} covers \emph{stems} between $0$ and $25$ and \emph{coweights} between $-9$ and $7$. Within this range, every filtration-$0$ element is a permanent cycle, and the longest differentials that appear are $d_4$-differentials occurring in higher filtrations. This stands in sharp contrast to the phenomenon established in \cref{thmB}.
\end{enumerate}

\cref{thmB} follows as a corollary of \cref{arbitrarylongdiff}, which specifies the explicit sources of certain genuine $C_2$-equivariant Adams $d_r$-differentials for each $r\geq 4$. 
A key step in the proof of \cref{arbitrarylongdiff} uses the techniques of the Generalized Leibniz Rule and the Generalized Mahowald Trick, developed by Lin--Wang and the sixth author in their recent solution to the last Kervaire invariant one problem in dimension 126 \cite{linwangxu2025lastkervaire}.

We start by giving more context for the \cref{mainquestion} and stating the complete description of the Hurewicz images of $H\underline{\bbF_2}, H\underline{\bbZ}$, and $H\underline{A}$ in \cref{subsec:1.1}. In \cref{subsec:1.2}, we turn to the remaining filtration-$0$ classes in the genuine $C_2$-equivariant Adams spectral sequence and state the results from which \cref{thmB} follows. 

\subsection{The equivariant Hurewicz images}\label{subsec:1.1}

For a ring spectrum $E$, the Hurewicz map
\[
\iota_E \colon \pi_*S^0 \longrightarrow \pi_*E
\]
encodes the information in $\pi_*S^0$ that is \emph{detected} by $E$-homology, thereby providing a first approximation to the stable stems \cite{mahowald1982imageofj, douglasfrancishenriqueshill2014topological,hillhopkinsravenel2016nonexistence}. Consequently, determining the Hurewicz images of ring spectra has long been a central theme in homotopy theory. Recent progress in this direction includes computations for topological modular forms \cite{behrenshillhopkinsmahowald2020detecting,  brunerrognes2021adams, behrensmahowaldquigley2023hurewicz, bhattacharya2024new}, as well as for the real bordism theory and real Johnson--Wilson theories \cite{lishiwangxu2019hurewicz, hahnshi2020real}.

This paper studies the Hurewicz maps in the $C_2$-equivariant setting. Recall that the real representation ring $RO(C_2)$ has a basis consisting of the trivial representation and the sign representation $\sigma$. We use the bi-grading $(s, w)$ to indicate the $RO(C_2)$-degree $(s-w)+w\sigma$. We refer to $s$ as the \textit{stem}, $w$ as the \textit{weight}, and $s-w$ as the \textit{coweight}. 

Naturally graded by $RO(C_2)$, $C_2$-equivariant homology theories exhibit substantially richer structure than their classical counterparts; yet they remain computationally tractable \cite{greenlees1988stable, greenlees1988borelhomology, hukriz2001real, szymik2007equivariant}. Given a $C_2$-Mackey functor $\underline{M}$, there is an associated genuine $C_2$-equivariant Eilenberg--MacLane spectrum $H\underline{M}$. If $\underline{M}=\underline{\bbF_2}$, $H\underline{\bbF_2}$ is the base for the genuine $C_2$-equivariant Adams spectral sequence, which has been studied extensively by \cite{belmontguillouisaksen2021c2, guillouisaksen2020bredon, guillouhillisaksenravenel2020cohomology} and has been the primary tool in recent progress on computing the $RO(C_2)$-graded equivariant stable stems in a substantial range \cite{guillouisaksen2024c2}. If $\underline{M}=\underline{\bbZ}$, the constant Mackey functor with value $\bbZ$, $H\underline{\bbZ}$ is a key input to the slice spectral sequence and, in particular, to Hill--Hopkins--Ravenel's solution to the Kervaire invariant problem \cite{hillhopkinsravenel2016nonexistence}. If $\underline{M}=\underline{A}$, the Burnside Mackey functor, $H\underline{A}$ is the monoidal unit in the category of Green functors, which are equivariant analogs of rings. However, despite a wide range of applications in homotopy theory, it remains unclear which elements in the equivariant stable stems are detected by these Eilenberg--MacLane spectra.

The initial goal of this paper is to explicitly compute the Hurewicz image of $H\underline{\bbF_2}$. Specifically, recall that 
\[\pi_{*, *}^{C_2}H\underline{\bbF_2} =\bbF_2[\rho, \tau]\oplus NC, \qquad NC = \bigoplus_{n, k\geq 0}\bbF_2\left\{\frac{\theta}{\rho^k\tau^n}\right\}\]
where $|\rho| = (-1,-1)$, $|\tau| = (0,-1)$, and $|\theta| = (0,2)$. The first summand equals the coefficient ring of the $\bbR$-motivic Eilenberg–MacLane spectrum \cite{Voevodsky2003motivic}
\[{H\bbF_2^\bbR}_{*, *}=\bbF_2[\rho, \tau].\]
The generator $\frac{\theta}{\rho^k\tau^n}$ in the \emph{negative cone} has degree $(k, k+n+2)$. The $\bbF_2[\rho, \tau]$-module structure of $NC$ can be read off from the notation, together with the relations 
\[\rho \cdot \theta = \tau \cdot \theta =\theta \cdot \theta =0.\] 
\cref{HF2figure} depicts the additive structure of $H\underline{\bbF_2}_{*, *}$, where each black dot represents a copy of $\bbF_2$. 

\begin{figure}[H]
        \centering
        \begin{tikzpicture}[scale=0.5]
\draw[thick,dashed, ->] (-6,0) -- (6,0);
\draw[thick,dashed, ->] (0,-5.5) -- (0, 7.5);

\node at (-5, 7){$\bullet= \bbF_2$};

\node at (6.5,0){s};
\node at (-0.5, 7.5){w};

\node at (-0.4, 0.3)[scale=0.75]{0};
\node at (-0.4, 2)[scale=0.75]{2};
\node at (-0.4, 4)[scale=0.75]{4};
\node at (-0.4, 6)[scale=0.75]{6};
\node at (0.4, -2)[scale=0.75]{-2};
\node at (0.4, -4)[scale=0.75]{-4};

\node at (2, -0.4)[scale=0.75]{2};
\node at (4, -0.4)[scale=0.75]{4};
\node at (-2, 0.4)[scale=0.75]{-2};
\node at (-4, 0.4)[scale=0.75]{-4};

\node at (0,2){$\bullet$};
\node at (0.3, 1.7)[scale=0.75]{$\theta$};
\node at (1,3){$\bullet$};
\node at (2,4){$\bullet$};
\node at (3,5){$\bullet$};
\node at (4,6){$\bullet$};
\node at (5,7){$\bullet$};

\node at (1.3, 2.7)[scale=0.75]{$\frac{\theta}{\rho}$};
\node at (2.3, 3.7)[scale=0.75]{$\frac{\theta}{\rho^2}$};
\node at (3.3, 4.7)[scale=0.75]{$\frac{\theta}{\rho^3}$};
\node at (4.3, 5.7)[scale=0.75]{$\frac{\theta}{\rho^4}$};
\node at (5.3, 6.7)[scale=0.75]{$\frac{\theta}{\rho^5}$};

\node at (0,3){$\bullet$};
\node at (1,4){$\bullet$};
\node at (2,5){$\bullet$};
\node at (3,6){$\bullet$};
\node at (4,7){$\bullet$};

\node at (0,4){$\bullet$};
\node at (1,5){$\bullet$};
\node at (2,6){$\bullet$};
\node at (3,7){$\bullet$};

\node at (0,5){$\bullet$};
\node at (1,6){$\bullet$};
\node at (2,7){$\bullet$};

\node at (0,6){$\bullet$};
\node at (1,7){$\bullet$};

\node at (0,7){$\bullet$};

\node at (0.3, 2.7)[scale=0.75]{$\frac{\theta}{\tau}$};
\node at (0.3, 3.7)[scale=0.75]{$\frac{\theta}{\tau^2}$};
\node at (0.3, 4.7)[scale=0.75]{$\frac{\theta}{\tau^3}$};
\node at (0.3, 5.7)[scale=0.75]{$\frac{\theta}{\tau^4}$};
\node at (0.3, 6.7)[scale=0.75]{$\frac{\theta}{\tau^5}$};

\node at (0,0){$\bullet$};
\node at (-1,-1){$\bullet$};
\node at (-2,-2){$\bullet$};
\node at (-3,-3){$\bullet$};
\node at (-4,-4){$\bullet$};
\node at (-5,-5){$\bullet$};

\node at (-1.3, -0.7)[scale=0.75]{$\rho$};
\node at (-2.3, -1.7)[scale=0.75]{$\rho^2$};
\node at (-3.3, -2.7)[scale=0.75]{$\rho^3$};
\node at (-4.3, -3.7)[scale=0.75]{$\rho^4$};
\node at (-5.3, -4.7)[scale=0.75]{$\rho^5$};

\node at (0,-1){$\bullet$};
\node at (-1,-2){$\bullet$};
\node at (-2,-3){$\bullet$};
\node at (-3,-4){$\bullet$};
\node at (-4,-5){$\bullet$};

\node at (0,-2){$\bullet$};
\node at (-1,-3){$\bullet$};
\node at (-2,-4){$\bullet$};
\node at (-3,-5){$\bullet$};

\node at (0,-3){$\bullet$};
\node at (-1,-4){$\bullet$};
\node at (-2,-5){$\bullet$};

\node at (0,-4){$\bullet$};
\node at (-1,-5){$\bullet$};

\node at (0,-5){$\bullet$};

\node at (-0.3, -0.7)[scale=0.75]{$\tau$};
\node at (-0.3, -1.7)[scale=0.75]{$\tau^2$};
\node at (-0.3, -2.7)[scale=0.75]{$\tau^3$};
\node at (-0.3, -3.7)[scale=0.75]{$\tau^4$};
\node at (-0.3, -4.7)[scale=0.75]{$\tau^5$};

\end{tikzpicture}
        \caption{$\pi_{s, w}^{C_2}H\underline{\bbF_2}$ }
        \label{HF2figure}
    \end{figure}
    
\begin{rmk}
The notation $\theta$ is also used in \cite{lishiwangxu2019hurewicz, behrensshah2020c2, may2020structure}. In other references, the same element is sometimes denoted by $\frac{\gamma}{\tau}$; see, for example, \cite{guillouhillisaksenravenel2020cohomology, guillouisaksen2024c2, belmontguillouisaksen2021c2}.
\end{rmk}

Furthermore, for any integer $n\geq 0$, write $n=(2a+1)2^{c+4d}$ with $a, c, d \in \bbZ$ and $0\leq c\leq 3$. Let 
\[\psi(n):=2^c+8d\] denote the $n$-th Radon--Hurwitz number \cite{eckmann1942gruppen}; in particular, Adams proved that the maximal number of continuous linearly independent vector fields on $S^{n-1}$ is $\psi(n)-1$ in \cite{adams1962vector}. We list the first few values of $\psi(n)$ for reference. 
\[\begin{tabular}{@{}c|c|c|c|c|c|c|c|c|c|c|c|c|c|c|c|c@{}}

$n=$ &  2 & 4 &6 &8 &10 &12 &14 &16 &18 &20 &22 &24 &26 &28 &30 &32                                                                      \\\hline 
$\psi(n)$  & 2 &4&2&8&2&4&2&9&   2 &4&2&8&2&4&2&10   
\end{tabular}\]

\begin{figure}
    \centering
    \begin{tikzpicture}[scale=0.195, h0/.style={draw={gray}},]
    \draw[h0, ->] (0,-2) -- (65, -2);
    \node at (66, -2){$s$};
    \draw[h0, ->] (0,-2) -- (0, 65);
    \node at (0, 66){$w$};

    \foreach \z in {0}
		\draw (\z, \z) node[scale=0.5]{\color{blue}{$\bullet$}};
    \foreach \z in {1, ..., 63}
		\draw (\z, \z) node[scale=0.4]{$\color{gray}{\circ}$};
        
    \foreach \z in {0, 1}
		\draw (\z, \z+1) node[scale=0.5]{\color{blue}{$\bullet$}};
    \foreach \z in {2, ..., 62}
		\draw (\z, \z+1) node[scale=0.4]{$\color{gray}{\circ}$};
        
    \foreach \z in {0}
		\draw (\z, \z+2) node[scale=0.5]{\color{blue}{$\bullet$}};
    \foreach \z in {1, ..., 61}
		\draw (\z, \z+2) node[scale=0.4]{$\color{gray}{\circ}$};

    \foreach \z in {0, 1, 2, 3}
		\draw (\z, \z+3) node[scale=0.5]{\color{blue}{$\bullet$}};
    \foreach \z in {4, ..., 60}
		\draw (\z, \z+3) node[scale=0.4]{$\color{gray}{\circ}$};

    \foreach \z in {0}
		\draw (\z, \z+4) node[scale=0.5]{\color{blue}{$\bullet$}};
    \foreach \z in {1, ..., 59}
		\draw (\z, \z+4) node[scale=0.4]{$\color{gray}{\circ}$};

    \foreach \z in {0, 1}
		\draw (\z, \z+5) node[scale=0.5]{\color{blue}{$\bullet$}};
    \foreach \z in {2, ..., 58}
		\draw (\z, \z+5) node[scale=0.4]{$\color{gray}{\circ}$};
        
    \foreach \z in {0}
		\draw (\z, \z+6) node[scale=0.5]{\color{blue}{$\bullet$}};
    \foreach \z in {1, ..., 57}
		\draw (\z, \z+6) node[scale=0.4]{$\color{gray}{\circ}$};

    \foreach \z in {0, ..., 7}
		\draw (\z, \z+7) node[scale=0.5]{\color{blue}{$\bullet$}};
    \foreach \z in {8, ..., 56}
		\draw (\z, \z+7) node[scale=0.4]{$\color{gray}{\circ}$};

    \foreach \z in {0}
		\draw (\z, \z+8) node[scale=0.5]{\color{blue}{$\bullet$}};
    \foreach \z in {1, ..., 55}
		\draw (\z, \z+8) node[scale=0.4]{$\color{gray}{\circ}$};
        
    \foreach \z in {0, 1}
		\draw (\z, \z+9) node[scale=0.5]{\color{blue}{$\bullet$}};
    \foreach \z in {2, ..., 54}
		\draw (\z, \z+9) node[scale=0.4]{$\color{gray}{\circ}$};
        
    \foreach \z in {0}
		\draw (\z, \z+10) node[scale=0.5]{\color{blue}{$\bullet$}};
    \foreach \z in {1, ..., 53}
		\draw (\z, \z+10) node[scale=0.4]{$\color{gray}{\circ}$};

    \foreach \z in {0, 1, 2, 3}
		\draw (\z, \z+11) node[scale=0.5]{\color{blue}{$\bullet$}};
    \foreach \z in {4, ..., 52}
		\draw (\z, \z+11) node[scale=0.4]{$\color{gray}{\circ}$};

    \foreach \z in {0}
		\draw (\z, \z+12) node[scale=0.5]{\color{blue}{$\bullet$}};
    \foreach \z in {1, ...,51}
		\draw (\z, \z+12) node[scale=0.4]{$\color{gray}{\circ}$};

    \foreach \z in {0, 1}
		\draw (\z, \z+13) node[scale=0.5]{\color{blue}{$\bullet$}};
    \foreach \z in {2, ..., 50}
		\draw (\z, \z+13) node[scale=0.4]{$\color{gray}{\circ}$};
        
    \foreach \z in {0}
		\draw (\z, \z+14) node[scale=0.5]{\color{blue}{$\bullet$}};
    \foreach \z in {1, ..., 49}
		\draw (\z, \z+14) node[scale=0.4]{$\color{gray}{\circ}$};

    \foreach \z in {0, ..., 8}
		\draw (\z, \z+15) node[scale=0.5]{\color{blue}{$\bullet$}};
    \foreach \z in {9, ..., 15}
		\draw (\z, \z+15) node[scale=0.5]{\color{red}{$\bullet$}};
    \foreach \z in {16, ..., 48}
		\draw (\z, \z+15) node[scale=0.4]{$\color{gray}{\circ}$};

    \foreach \z in {0}
		\draw (\z, \z+16) node[scale=0.5]{\color{blue}{$\bullet$}};
    \foreach \z in {1, ..., 47}
		\draw (\z, \z+16) node[scale=0.4]{$\color{gray}{\circ}$};
        
    \foreach \z in {0, 1}
		\draw (\z, \z+17) node[scale=0.5]{\color{blue}{$\bullet$}};
    \foreach \z in {2, ..., 46}
		\draw (\z, \z+17) node[scale=0.4]{$\color{gray}{\circ}$};
        
    \foreach \z in {0}
		\draw (\z, \z+18) node[scale=0.5]{\color{blue}{$\bullet$}};
    \foreach \z in {1, ..., 45}
		\draw (\z, \z+18) node[scale=0.4]{$\color{gray}{\circ}$};

    \foreach \z in {0, 1, 2, 3}
		\draw (\z, \z+19) node[scale=0.5]{\color{blue}{$\bullet$}};
    \foreach \z in {4, ..., 44}
		\draw (\z, \z+19) node[scale=0.4]{$\color{gray}{\circ}$};

    \foreach \z in {0}
		\draw (\z, \z+20) node[scale=0.5]{\color{blue}{$\bullet$}};
    \foreach \z in {1, ..., 43}
		\draw (\z, \z+20) node[scale=0.4]{$\color{gray}{\circ}$};

    \foreach \z in {0, 1}
		\draw (\z, \z+21) node[scale=0.5]{\color{blue}{$\bullet$}};
    \foreach \z in {2, ..., 42}
		\draw (\z, \z+21) node[scale=0.4]{$\color{gray}{\circ}$};
        
    \foreach \z in {0}
		\draw (\z, \z+22) node[scale=0.5]{\color{blue}{$\bullet$}};
    \foreach \z in {1, ..., 41}
		\draw (\z, \z+22) node[scale=0.4]{$\color{gray}{\circ}$};

    \foreach \z in {0, ..., 7}
		\draw (\z, \z+23) node[scale=0.5]{\color{blue}{$\bullet$}};
    \foreach \z in {8, ..., 40}
		\draw (\z, \z+23) node[scale=0.4]{$\color{gray}{\circ}$};

    \foreach \z in {0}
		\draw (\z, \z+24) node[scale=0.5]{\color{blue}{$\bullet$}};
    \foreach \z in {1, ..., 39}
		\draw (\z, \z+24) node[scale=0.4]{$\color{gray}{\circ}$};
        
    \foreach \z in {0, 1}
		\draw (\z, \z+25) node[scale=0.5]{\color{blue}{$\bullet$}};
    \foreach \z in {2, ..., 38}
		\draw (\z, \z+25) node[scale=0.4]{$\color{gray}{\circ}$};
        
    \foreach \z in {0}
		\draw (\z, \z+26) node[scale=0.5]{\color{blue}{$\bullet$}};
    \foreach \z in {1, ..., 37}
		\draw (\z, \z+26) node[scale=0.4]{$\color{gray}{\circ}$};

    \foreach \z in {0, 1, 2, 3}
		\draw (\z, \z+27) node[scale=0.5]{\color{blue}{$\bullet$}};
    \foreach \z in {4, ..., 36}
		\draw (\z, \z+27) node[scale=0.4]{$\color{gray}{\circ}$};

    \foreach \z in {0}
		\draw (\z, \z+28) node[scale=0.5]{\color{blue}{$\bullet$}};
    \foreach \z in {1, ..., 35}
		\draw (\z, \z+28) node[scale=0.4]{$\color{gray}{\circ}$};

    \foreach \z in {0, 1}
		\draw (\z, \z+29) node[scale=0.5]{\color{blue}{$\bullet$}};
    \foreach \z in {2, ..., 34}
		\draw (\z, \z+29) node[scale=0.4]{$\color{gray}{\circ}$};
        
    \foreach \z in {0}
		\draw (\z, \z+30) node[scale=0.5]{\color{blue}{$\bullet$}};
    \foreach \z in {1, ..., 33}
		\draw (\z, \z+30) node[scale=0.4]{$\color{gray}{\circ}$};

    \foreach \z in {0, ...,9}
		\draw (\z, \z+31) node[scale=0.5]{\color{blue}{$\bullet$}};
    \foreach \z in {10, ..., 31}
		\draw (\z, \z+31) node[scale=0.5]{\color{red}{$\bullet$}};
    \foreach \z in {32}
		\draw (\z, \z+31) node[scale=0.4]{$\color{gray}{\circ}$};
    
    \foreach \z in {0}
		\draw (\z, \z+32) node[scale=0.5]{\color{blue}{$\bullet$}};
    \foreach \z in {1, ..., 31}
		\draw (\z, \z+32) node[scale=0.4]{$\color{gray}{\circ}$};
        
    \foreach \z in {0, 1}
		\draw (\z, \z+33) node[scale=0.5]{\color{blue}{$\bullet$}};
    \foreach \z in {2, ..., 30}
		\draw (\z, \z+33) node[scale=0.4]{$\color{gray}{\circ}$};
        
    \foreach \z in {0}
		\draw (\z, \z+34) node[scale=0.5]{\color{blue}{$\bullet$}};
    \foreach \z in {1, ..., 29}
		\draw (\z, \z+34) node[scale=0.4]{$\color{gray}{\circ}$};

    \foreach \z in {0, 1, 2, 3}
		\draw (\z, \z+35) node[scale=0.5]{\color{blue}{$\bullet$}};
    \foreach \z in {4, ..., 28}
		\draw (\z, \z+35) node[scale=0.4]{$\color{gray}{\circ}$};

    \foreach \z in {0}
		\draw (\z, \z+36) node[scale=0.5]{\color{blue}{$\bullet$}};
    \foreach \z in {1, ..., 27}
		\draw (\z, \z+36) node[scale=0.4]{$\color{gray}{\circ}$};

    \foreach \z in {0, 1}
		\draw (\z, \z+37) node[scale=0.5]{\color{blue}{$\bullet$}};
    \foreach \z in {2, ..., 26}
		\draw (\z, \z+37) node[scale=0.4]{$\color{gray}{\circ}$};
        
    \foreach \z in {0}
		\draw (\z, \z+38) node[scale=0.5]{\color{blue}{$\bullet$}};
    \foreach \z in {1, ..., 25}
		\draw (\z, \z+38) node[scale=0.4]{$\color{gray}{\circ}$};

    \foreach \z in {0, ..., 7}
		\draw (\z, \z+39) node[scale=0.5]{\color{blue}{$\bullet$}};
    \foreach \z in {8, ..., 24}
		\draw (\z, \z+39) node[scale=0.4]{$\color{gray}{\circ}$};

    \foreach \z in {0}
		\draw (\z, \z+40) node[scale=0.5]{\color{blue}{$\bullet$}};
    \foreach \z in {1, ..., 23}
		\draw (\z, \z+40) node[scale=0.4]{$\color{gray}{\circ}$};
        
    \foreach \z in {0, 1}
		\draw (\z, \z+41) node[scale=0.5]{\color{blue}{$\bullet$}};
    \foreach \z in {2, ..., 22}
		\draw (\z, \z+41) node[scale=0.4]{$\color{gray}{\circ}$};
        
    \foreach \z in {0}
		\draw (\z, \z+42) node[scale=0.5]{\color{blue}{$\bullet$}};
    \foreach \z in {1, ..., 21}
		\draw (\z, \z+42) node[scale=0.4]{$\color{gray}{\circ}$};

    \foreach \z in {0, 1, 2, 3}
		\draw (\z, \z+43) node[scale=0.5]{\color{blue}{$\bullet$}};
    \foreach \z in {4, ..., 20}
		\draw (\z, \z+43) node[scale=0.4]{$\color{gray}{\circ}$};

    \foreach \z in {0}
		\draw (\z, \z+44) node[scale=0.5]{\color{blue}{$\bullet$}};
    \foreach \z in {1, ..., 19}
		\draw (\z, \z+44) node[scale=0.4]{$\color{gray}{\circ}$};

    \foreach \z in {0, 1}
		\draw (\z, \z+45) node[scale=0.5]{\color{blue}{$\bullet$}};
    \foreach \z in {2, ..., 18}
		\draw (\z, \z+45) node[scale=0.4]{$\color{gray}{\circ}$};
        
    \foreach \z in {0}
		\draw (\z, \z+46) node[scale=0.5]{\color{blue}{$\bullet$}};
    \foreach \z in {1, ..., 17}
		\draw (\z, \z+46) node[scale=0.4]{$\color{gray}{\circ}$};

    \foreach \z in {0, ..., 8}
		\draw (\z, \z+47) node[scale=0.5]{\color{blue}{$\bullet$}};
    \foreach \z in {9, ..., 15}
		\draw (\z, \z+47) node[scale=0.5]{\color{red}{$\bullet$}};
    \foreach \z in {16}
		\draw (\z, \z+47) node[scale=0.4]{$\color{gray}{\circ}$};

    \foreach \z in {0}
		\draw (\z, \z+48) node[scale=0.5]{\color{blue}{$\bullet$}};
    \foreach \z in {1, ..., 15}
		\draw (\z, \z+48) node[scale=0.4]{$\color{gray}{\circ}$};
        
    \foreach \z in {0, 1}
		\draw (\z, \z+49) node[scale=0.5]{\color{blue}{$\bullet$}};
    \foreach \z in {2, ..., 14}
		\draw (\z, \z+49) node[scale=0.4]{$\color{gray}{\circ}$};
        
    \foreach \z in {0}
		\draw (\z, \z+50) node[scale=0.5]{\color{blue}{$\bullet$}};
    \foreach \z in {1, ..., 13}
		\draw (\z, \z+50) node[scale=0.4]{$\color{gray}{\circ}$};

    \foreach \z in {0, 1, 2, 3}
		\draw (\z, \z+51) node[scale=0.5]{\color{blue}{$\bullet$}};
    \foreach \z in {4, ..., 12}
		\draw (\z, \z+51) node[scale=0.4]{$\color{gray}{\circ}$};

    \foreach \z in {0}
		\draw (\z, \z+52) node[scale=0.5]{\color{blue}{$\bullet$}};
    \foreach \z in {1, ..., 11}
		\draw (\z, \z+52) node[scale=0.4]{$\color{gray}{\circ}$};

    \foreach \z in {0, 1}
		\draw (\z, \z+53) node[scale=0.5]{\color{blue}{$\bullet$}};
    \foreach \z in {2, ..., 10}
		\draw (\z, \z+53) node[scale=0.4]{$\color{gray}{\circ}$};
        
    \foreach \z in {0}
		\draw (\z, \z+54) node[scale=0.5]{\color{blue}{$\bullet$}};
    \foreach \z in {1, ..., 9}
		\draw (\z, \z+54) node[scale=0.4]{$\color{gray}{\circ}$};

    \foreach \z in {0, ..., 7}
		\draw (\z, \z+55) node[scale=0.5]{\color{blue}{$\bullet$}};
    \foreach \z in {8}
		\draw (\z, \z+55) node[scale=0.4]{$\color{gray}{\circ}$};

    \foreach \z in {0}
		\draw (\z, \z+56) node[scale=0.5]{\color{blue}{$\bullet$}};
    \foreach \z in {1, ..., 7}
		\draw (\z, \z+56) node[scale=0.4]{$\color{gray}{\circ}$};
        
    \foreach \z in {0, 1}
		\draw (\z, \z+57) node[scale=0.5]{\color{blue}{$\bullet$}};
    \foreach \z in {2, ..., 6}
		\draw (\z, \z+57) node[scale=0.4]{$\color{gray}{\circ}$};
        
    \foreach \z in {0}
		\draw (\z, \z+58) node[scale=0.5]{\color{blue}{$\bullet$}};
    \foreach \z in {1, ..., 5}
		\draw (\z, \z+58) node[scale=0.4]{$\color{gray}{\circ}$};

    \foreach \z in {0, 1, 2, 3}
		\draw (\z, \z+59) node[scale=0.5]{\color{blue}{$\bullet$}};
    \foreach \z in {4}
		\draw (\z, \z+59) node[scale=0.4]{$\color{gray}{\circ}$};

    \foreach \z in {0}
		\draw (\z, \z+60) node[scale=0.5]{\color{blue}{$\bullet$}};
    \foreach \z in {1, ..., 3}
		\draw (\z, \z+60) node[scale=0.4]{$\color{gray}{\circ}$};

    \foreach \z in {0,1}
		\draw (\z, \z+61) node[scale=0.5]{\color{blue}{$\bullet$}};
    \foreach \z in {2}
		\draw (\z, \z+61) node[scale=0.4]{$\color{gray}{\circ}$};
        
    \foreach \z in {0}
		\draw (\z, \z+62) node[scale=0.5]{\color{blue}{$\bullet$}};
    \foreach \z in {1}
		\draw (\z, \z+62) node[scale=0.4]{$\color{gray}{\circ}$};

    \foreach \z in {0}
		\draw (\z, \z+63) node[scale=0.5]{\color{blue}{$\bullet$}};

    \draw (-0.8, 0) -- (-2.2, 0);
    \node at (-3.5,0){$\theta$};
    
    \draw (-0.8, 7) -- (-2.2, 7);
    \node at (-3.5,7){$\frac{\theta}{\tau^7}$};

    \draw (-0.8, 15) -- (-2.2, 15);
    \node at (-3.5, 15){$\frac{\theta}{\tau^{15}}$};

    \draw (-0.8, 23) -- (-2.2, 23);
    \node at (-3.5,23){$\frac{\theta}{\tau^{23}}$};

    \draw (-0.8, 31) -- (-2.2, 31);
    \node at (-3.5,31){$\frac{\theta}{\tau^{31}}$};

    \draw (-0.8, 39) -- (-2.2, 39);
    \node at (-3.5,39){$\frac{\theta}{\tau^{39}}$};

    \draw (-0.8, 47) -- (-2.2, 47);
    \node at (-3.5,47){$\frac{\theta}{\tau^{47}}$};

    \draw (-0.8, 55) -- (-2.2, 55);
    \node at (-3.5,55){$\frac{\theta}{\tau^{55}}$};

    \node at (55, 7)[scale=0.82]{$\color{blue}{\bullet}$ --- Hurewicz image};

    \node at (53.5, 9)[scale=0.82]{$\color{red}{\bullet}\ \color{black}{\&}\ \color{blue}{\bullet}$ --- $\Ext_{C_2}^{*, 0, *}$};

\end{tikzpicture}
    \caption{The Hurewicz image of $H\underline{\bbF_2}$ in the negative cone}
    \label{fig:hurewiczim}
\end{figure}

The Hurewicz image of $H\underline{\bbF_2}$ can be described as follows.

\begin{theorem}[\cref{homotopyHurewiczim}]\label{mainthm}
    The $RO(C_2)$-graded Hurewicz image of $H\underline{\bbF_2}$ consists of
\[\im (\iota_{H\underline{\bbF_2}})_{s, w}=\begin{cases}
        \bbF_2\{\rho^{-s}\} \quad &\mathrm{if}\ s=w\leq 0,\\
        \bbF_2\{\frac{\theta}{\rho^s \tau^{w-s-2} }\} \quad &\mathrm{if}\ 0\leq s< \psi(w-s-1) \ \mathrm{and}\ s-w\leq -2,\\
        0\quad &\mathrm{otherwise. \rlap{\hspace{15em} \tqed}}
        \end{cases} \]

\end{theorem}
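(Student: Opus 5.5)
We plan to identify the Hurewicz image with the filtration-$0$ permanent cycles of the genuine $C_2$-equivariant Adams spectral sequence. The composite $\pi^{C_2}_{\star}S^0\to \pi^{C_2}_\star H\underline{\bbF_2}$ factors through the edge map onto the $0$-line $\Ext^{0,*,*}_{C_2}$ of that spectral sequence. Hence a class of $\pi_{\star}H\underline{\bbF_2}$ lies in the image exactly when it is a primitive (an $\calA^{C_2}$-comodule primitive, i.e. lies in $\Ext^0$) that survives as a permanent cycle. The proof therefore splits into two steps: compute $\Ext^0$, then decide which of its classes are permanent cycles.

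\textbf{Step 1: computing $\Ext^0$.} We read off the primitives from the $\calA^{C_2}$-action on $\pi_\star H\underline{\bbF_2}$.

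In the positive cone $\bbF_2[\rho,\tau]$, $\rho$ is primitive. It is realized by the Euler class $a_\sigma\colon S^0\to S^\sigma$, so every $\rho^{m}$ with $m\geq 0$ is in the image. This gives the case $s=w\leq 0$.

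Any monomial $\rho^a\tau^b$ with $b>0$ is not primitive, since $Sq^1\tau=\rho$ and the Cartan formula gives a nonzero $Sq^{2^j}$ on $\tau^b$ (hitting $\rho^{a+2^j}\tau^{b-2^{j-1}}$-type terms). Alternatively, one can see directly that these are not in the image by restricting to underlying and geometric fixed points: $\tau$ restricts to $1$ underlying but dies geometrically, which is incompatible with coming from a map of spheres in that degree.

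In the negative cone, the dual action on $\theta/(\rho^k\tau^n)$ is determined by $\rho$- and $\tau$-divisibility. We expect $\frac{\theta}{\rho^k\tau^n}$ to be primitive iff $\binom{n+1+j}{j}$-type binomial coefficients vanish for $1\leq j\leq k$, i.e. iff $k<2^{\nu_2(n+1)}$. This is the combined red and blue region of \cref{fig:hurewiczim}, and is the mod-$2$ cohomological condition for a stunted projective space to have a trivial top cell attaching.

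\textbf{Step 2: permanent cycles via Ma's correspondence.} We use \cite{ma2026borel}, which identifies the negative-cone portion of the genuine $C_2$-Adams spectral sequence with the Borel spectral sequence. That in turn is identified with the classical Adams spectral sequence of a stunted projective spectrum $\bbR P^{a}_{b}$ (suitably Thom-shifted). Under this identification, $\frac{\theta}{\rho^k\tau^n}$ corresponds to the filtration-$0$ class detecting the top cell of $\bbR P^{n}_{n-k}$ (up to an index shift we would fix by matching degrees $(k,k+n+2)$). It is a permanent cycle iff the top cell splits off stably, i.e. $\bbR P^{n}_{n-k}$ is coreducible. By James duality and Adams' theorem \cite{adams1962vector}, this holds iff $k<\psi(n+1)$, equivalently iff $S^n$ has $k$ independent vector fields. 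Substituting $n=w-s-2$ and $k=s$ yields the middle case of the statement.

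\textbf{Main obstacle and fallback.} The main obstacle is the bookkeeping of Step 2: pinning down precisely which stunted projective space and which cell each $\frac{\theta}{\rho^k\tau^n}$ matches, and checking that "permanent cycle" corresponds to coreducibility and not merely to the vanishing of some finite-stage attaching map. If that bookkeeping resists, we would prove the two directions separately. For sufficiency, we would use the explicit Clifford-module constructions of \cref{sec:geoconstr}, which give unstable equivariant maps of representation spheres with nonzero $H\underline{\bbF_2}$-Hurewicz image. For necessity, we would argue directly: a preimage, after passing to geometric fixed points/Borel completion, produces a coreduction of $\bbR P^{n}_{n-k}$, which contradicts Adams when $k\geq\psi(n+1)$.
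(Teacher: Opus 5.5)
Your strategy is the paper's. You identify the Hurewicz image with the $0$-line permanent cycles and compute $\Ext^{*,0,*}_{C_2}$. You then use Ma's genuine $\to$ Borel $\to$ classical comparison to identify $\frac{\theta}{\rho^s\tau^{w-s-2}}$ with $1[s-w]$ in the Adams spectral sequence of $\bbR P^\infty_{-w}$, and conclude with Adams' description of the attaching maps. The bookkeeping you worry about is harmless. With $k=s$ and $n=w-s-2$, the relevant complex is $\bbR P^{-n-2}_{-n-k-2}$. This is not a suspension of $\bbR P^n_{n-k}$ in general, but by James periodicity its top cell splits iff $k<\psi(n+1)$, since $v_2(-n-1)=v_2(n+1)$; this is exactly part (2) of the paper's attaching-map proposition. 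Moreover, a map $S^{s-w}\to\bbR P^\infty_{-w}$ compresses into $\bbR P^{s-w}_{-w}$, so $1[s-w]$ is a permanent cycle exactly when that top cell splits off. The paper phrases the non-splitting direction via the Atiyah--Hirzebruch differential on $1[s-w]$ in the Mahowald square, which amounts to the same thing.

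The step that is genuinely unproved is the negative-cone part of Step 1. ``We expect'' is not an argument, and the criterion you wrote is wrong as stated. For $n$ odd, the $j=1$ coefficient $\binom{n+2}{1}$ is odd, so your test would exclude $\frac{\theta}{\rho\tau}$, which lies in $\Ext^0$ (indeed in the image, since $1<\psi(2)$). The paper computes $\Ext^{s,0,w}_{C_2}$ with the $\rho$-Bockstein spectral sequence. It uses the Guillou--Isaksen differentials $d_{2^m}\bigl(\frac{\theta}{\rho^{2^m}\tau^{2^m-1}}\bigr)=\frac{\theta}{\tau^{2^m+\lceil 2^{m-1}\rceil-1}}h_m$ together with their $\rho$- and $\tau^{2^{m+1}}$-periodicity. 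Within your own framework there is a cheaper fix. In the range $s\ge 0$, $w\ge s+2$, Ma's results give $\Ext^{s,0,w}_{C_2}\cong\Ext^{s-w,0}(\bbR P^\infty_{-w})$. Since $\pi_{s,w}H\underline{\bbF_2}$ is spanned by $\frac{\theta}{\rho^s\tau^{w-s-2}}$, the $0$-line is nonzero exactly when no $Sq^j$ with $1\le j\le s$ hits the $(s-w)$-cell. For the Hurewicz statement alone you do not even need the exact bound $2^{v_2(n+1)}$: classes outside $\Ext^0$ are automatically not in the image.

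Also drop the ``alternative'' positive-cone argument. The class $\tau$ does not die geometrically: $\pi_\star H\underline{\bbF_2}[\rho^{-1}]=\bbF_2[\rho^{\pm1},\tau]$, and $\Phi^{C_2}$ sends $\tau$ to the generator $\tau/\rho$ of $\pi_1\Phi^{C_2}H\underline{\bbF_2}$. Keep the non-primitivity argument instead. It is the comodule form of the paper's $\rho$-Bockstein differentials $d_1(\tau)=\rho h_0$ and $d_{2^j}(\tau^{2^j})=\rho^{2^j}\tau^{2^{j-1}}h_j$.
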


For $s=w\leq 0$, $\rho$ is the image of $a_\sigma$, the Euler class of the $C_2$-sign representation $\sigma$. \cref{thmA} follows immediately from the case $s-w\leq -2$ (i.e. in the negative cone). Note that the bidegree of $\frac{\theta}{\rho^s\tau^{w-s-2}}$ is exactly $(s,w)$. A diagram of the Hurewicz image of $H\underline{\bbF_2}$ in the negative cone is shown in \cref{fig:hurewiczim}.

We prove \cref{mainthm} by first computing $\Ext_{C_2}^{*, 0, *}$, the $0$-line of the $E_2$-page of the genuine $C_2$-equivariant Adams spectral sequence, via the \emph{$\rho$-Bockstein spectral sequence}. The elements at degree $(s, 0, w)$ of $\Ext_{C_2}$ with $s-w\leq -2$ are represented by the solid (red and blue) dots in \cref{fig:hurewiczim}. The Hurewicz image of $H\underline{\bbF_2}$ consists precisely of surviving permanent cycles on the $0$-line.  Using the correspondence between the genuine Adams spectral sequence, the Borel Adams spectral sequence, and the classical Adams spectral sequence of stunted real projective spaces \cite{ma2026borel}, we then determine which $0$-line elements support Adams differentials. It turns out that these differentials are governed by the attaching maps of the corresponding stunted real projective spaces, and the necessary information follows as a corollary of Adams' work on the vector field problem \cite{adams1962vector}. This explains the appearance of the Radon--Hurwitz number $\psi(n)$ in \cref{mainthm}. The strategy of the proof of \cref{mainthm} is summarized in \cref{fig:roadmap}.

\begin{figure}[H]
\centering
\begin{tikzpicture}[
    node distance=1.6cm and 1.4cm,
    box/.style={
        draw,
        rounded corners,
        align=center,
        inner sep=4pt,
        text width=2cm
    },
    ar/.style={
        -{Stealth[length=2.2mm]},
        thick
    },
    arr/.style={
    {Stealth[length=2.2mm]}-{Stealth[length=2.2mm]},
    thick
    }  
]
\small
\node[box] (genuine) {Genuine $C_2$-ASS};

\node[box, right=of genuine] (borel) {Borel $C_2$-ASS};

\node[box, right=of borel] (rp) {Classical ASS of $\bbR P^\infty_n$};

\node[box, right=of rp] (geom) {vector fields on spheres};

\draw[arr] (genuine) --  (borel);

\draw[arr] (borel) --  (rp);

\draw[ar] (geom) -- node[above, align=center, scale=0.7] {differentials} (rp) ;

\end{tikzpicture}

\caption{Roadmap for the proof of \cref{mainthm}}
\label{fig:roadmap}
\end{figure}

In \cref{fig:hurewiczim}, red dots are the classes that support Adams differentials, while blue dots are the elements in the Hurewicz image. Hollow circles are classes that do not survive to $\Ext_{C_2}$.

As a direct application of \cref{mainthm} and the results of Belmont, the sixth author and the eighth author \cite{belmontxuzhang2024reduced}, we also determine the Hurewicz image of $H\underline{\bbZ}$ and $H\underline{A}$.

\begin{theorem}[\cref{HurewiczofHZHA}]\label{mainHurewiczofHZHA}
    ~\begin{enumerate}
    \item The $RO(C_2)$-graded Hurewicz image of $H\underline{\bbZ}$ consists of
    \[\im (\iota_{H\underline{\bbZ}} )_{s, w}=\begin{cases}
        \bbZ \quad &\mathrm{if}\ s=0\ \mathrm{and}\ w=2k\ \mathrm{for}\ k \in \bbZ,\\
        \bbF_2 \quad &\mathrm{if}\ s=w<0,\\
        \bbF_2 \quad &\mathrm{if}\ 0\leq s< \psi(w-s-1) \ \mathrm{and}\ s-w=2k-1\ \mathrm{for\ } k\leq -1,\\
        0\quad &\mathrm{otherwise.}
    \end{cases}\]
    \item The $RO(C_2)$-graded Hurewicz image of $H\underline{A}$ consists of
    \[\im (\iota_{H\underline{A}} )_{s, w}=\begin{cases}
        A(C_2)\quad &\mathrm{if}\ s=w=0,\\
        \bbZ \quad &\mathrm{if}\ s=0\ \mathrm{and}\ w=-2k\ \mathrm{for}\ k\neq 0,\\
        \bbZ \quad &\mathrm{if}\ s=w\neq 0,\\
        \bbF_2 \quad &\mathrm{if}\ 0\leq s< \psi(w-s-1) \ \mathrm{and}\ s-w=2k-1\ \mathrm{for\ } k\leq -1,\\
        0\quad &\mathrm{otherwise. \rlap{\hspace{20.4em} \tqed}}
    \end{cases}\]
\end{enumerate}

\end{theorem}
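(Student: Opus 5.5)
We deduce \cref{mainHurewiczofHZHA} from \cref{mainthm} by comparing the three Eilenberg--MacLane spectra along the ring maps
\[
H\underline{A}\longrightarrow H\underline{\bbZ}\longrightarrow H\underline{\bbF_2}.
\]
These are induced by the Mackey functor maps $\underline A\to\underline\bbZ$ (augmentation) and $\underline\bbZ\to\underline{\bbF_2}$ (reduction). Both are maps of ring spectra, so the Hurewicz maps are compatible: $\iota_{H\underline{\bbF_2}}$ factors through $\iota_{H\underline\bbZ}$, which factors through $\iota_{H\underline A}$. We need the additive structure of $\pi_\star^{C_2}H\underline\bbZ$ and $\pi_\star^{C_2}H\underline A$, taken from \cite{belmontxuzhang2024reduced} and the standard computations, together with the maps on homotopy between them.

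\emph{Negative cone.} Here $\pi_{s,w}H\underline\bbZ$ is $\bbF_2$ exactly when $s-w$ is odd and $\le -3$, with classes $\frac{\theta}{\rho^s\tau^{n}}$ for $n=w-s-2$ odd. Reduction to $H\underline{\bbF_2}$ is injective on these classes and sends them to the corresponding $\frac{\theta}{\rho^s\tau^n}$. The same holds for $H\underline A$, whose negative cone agrees with that of $H\underline\bbZ$ in this region by \cite{belmontxuzhang2024reduced}.

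Hence an element of $\pi_\star S^0$ hits such a class in $H\underline\bbZ$ or $H\underline A$ if and only if its image in $H\underline{\bbF_2}$ is $\frac{\theta}{\rho^s\tau^n}$. By \cref{mainthm}, this happens exactly when $0\le s<\psi(n+1)=\psi(w-s-1)$. This gives the $\bbF_2$ rows; the condition $s-w=2k-1$ with $k\le-1$ is just the statement that $n$ is odd.

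The even-$n$ classes of $H\underline{\bbF_2}$ do not lift to $H\underline{\bbZ}$ at all, which is consistent with $\psi(w-s-1)=1$ for $w-s-1$ odd. The only such case in the image is $s=0$, and there the corresponding integral classes do not exist in these degrees. I will need to double-check this matching of parities carefully.

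\emph{Positive cone.} These are the classes $\rho$-divisible and $u$-type. For $H\underline\bbZ$, the integral classes in degrees $(0,2k)$ are $u_{2\sigma}^{-k}$-type generators, and also the classes $\frac{2}{u^k}$ for $k>0$. We realise them by explicit equivariant maps of representation spheres of degree $(0,2k)$, namely $S^{2k\sigma}\to S^{2k}$ and its reverse. Their fixed point and underlying degrees are computed directly, which gives $\bbZ$ in the image. The classes $\rho^{-s}=a_\sigma^{-s}$ for $s=w<0$ come from $a_\sigma$ and give $\bbF_2$, since $2a_\sigma=0$ in $H\underline\bbZ$. No other degree can be hit: elsewhere the target is either in a $\tau$-tower detected mod $2$ by $\tau^j$, $j>0$, which \cref{mainthm} excludes, or it is $2$-divisible in a way excluded by a degree count of fixed points.

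For $H\underline A$ the same arguments apply. The differences are that $\pi_{0,0}=A(C_2)$ is hit entirely by $\pi_0^{C_2}S^0=A(C_2)$, and $a_\sigma$ now has infinite order, giving $\bbZ$ for $s=w\ne 0$, including positive $s$ via the transfer/dual classes. The degrees $(0,-2k)$ use the analogous $u$-type maps.

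The main obstacle is this positive-cone and integral bookkeeping, not the negative cone. We have to identify precisely which multiples of the $\bbZ$-generators are realized by maps of spheres. The tool I will use is the geometric fixed point and underlying degree homomorphisms $\pi_{0,w}^{C_2}S^0\to\bbZ\times\bbZ$. I will compare them with the corresponding maps for $H\underline\bbZ$ and $H\underline A$, and check the sign and index conventions against \cite{belmontxuzhang2024reduced}.
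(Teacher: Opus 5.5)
Your treatment of the $\bbF_2$-degrees matches the paper. Reduction to $H\underline{\bbF_2}$ is injective on those classes, so \cref{mainthm} decides them, and $s-w=2k-1$ just encodes $n=w-s-2$ odd.

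The aside in that paragraph is wrong, though. For $s=0$ and $n$ even the integral classes do exist: $H\underline{\bbZ}_{0,n+2}\cong\bbZ$, and likewise for $H\underline{A}$. It is generated by the class reducing onto $\frac{\theta}{\tau^{n}}$, which is your $\frac{2}{u^k}$. These are exactly the $\bbZ$ rows of the statement. So that paragraph contradicts your next one, and it should simply defer those degrees to the integral analysis.

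The integral part has concrete failures.

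\emph{No $\bbZ\times\bbZ$ of degrees.} On $\pi^{C_2}_{0,w}S^0$ with $w\neq 0$, geometric fixed points land in $\pi_{-w}S^0$, which is $0$ or finite. Only the underlying degree is available. It suffices, because restricting $\iota(x)$ to $\pi^e_0H\bbZ=\bbZ$ gives the underlying degree of $x$. Since any nonzero subgroup of $\bbZ$ is $\bbZ$, you need not pin down multiples for this statement.

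\emph{Degrees $(0,2k)$, $k>0$.} The map $S^{2k\sigma}\to S^{2k\sigma}/C_2\simeq\Sigma\bbR P^{2k-1}\to S^{2k}$ has underlying degree $2$. This is the zero-vector-field case of \cref{thm:unstablegeoconstr}.

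\emph{Degrees $(0,-2k)$, $k>0$.} ``The reverse'' cannot be realized unstably: a $C_2$-map $S^{2k}\to S^{2k\sigma}$ lands in $(S^{2k\sigma})^{C_2}=S^0$ and is constant. You need a stable class, for example the transfer of $1\in\pi^e_{2k-2k\sigma}S^0\cong\bbZ$, which has underlying degree $2$. The paper instead uses the generators $\omega_{\pm k}$ of \cite{belmontxuzhang2024reduced}, with $\omega_k\omega_{-k}=2[C_2]$.

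\emph{$H\underline{A}$ in degrees $s=w>0$.} This is the more serious gap, and it is the one case needing a genuinely new input. ``Transfer/dual classes'' fails here. Transfers in this degree come from $\pi_sS^0$, which is finite for $s>0$, so they are torsion and have trivial geometric fixed points. Hence they map to $0$ in $H\underline{A}_{s,s}\cong\bbZ$.

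The missing idea is a non-torsion class in $\pi^{C_2}_{s,s}S^0$, namely $\eta_{C_2}^s$. In this degree $\Phi^{C_2}$ does land in $\pi_0S^0=\bbZ$, and $\Phi^{C_2}(\eta_{C_2})=\pm 2$, since its fixed points are the double cover $S^1\to S^1$. So your fixed-point tool works with this witness. Equivalently, $\eta\rho=2-[C_2]$ and $[C_2]\eta=\mathrm{tr}(\mathrm{res}\,\eta)=0$ give $\iota(\eta_{C_2}^s)=2^{s-1}\frac{\eta}{\rho^{s-1}}\neq 0$. The paper uses the torsion-free generators $\frac{\eta_{C_2}^s}{2^{n(s)}}$ from \cite{belmontxuzhang2024reduced} to obtain the exact image.
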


The generators of these Hurewicz images are specified in \cref{sec:HZHA}.

\subsection{Genuine $C_2$-Adams differentials on the $0$-line}\label{subsec:1.2}
After determining all the filtration-$0$ permanent cycles in the genuine $C_2$-equivariant Adams spectral sequence, we are naturally led to study which Adams differentials are supported by the non-survival classes on the $0$-line.

\begin{figure}
    \centering
    \begin{tikzpicture}[scale=0.5]
    \usetikzlibrary{arrows.meta}
    
    \node at (16, 0){$\color{red}{\bullet}$};
    \node at (20, 0){$\color{red}{\bullet}$};
    \draw[->] (19.8, 0.3) -- (18.5, 2.7);
    \node at (20, 1.5)[scale=0.7]{$d_2$};
    \draw[->] (15.8, 0.3) -- (14.5, 2.7);
    \node at (16, 1.5)[scale=0.7]{$d_2$};
    \draw (19.9,0) -- (16.1, 0);
    \node at (13.5, 0){$\dots$};
    \draw (15.9, 0) -- (15, 0);
    \node at (11, 0){$\color{red}{\bullet}$};
    \draw (11.1, 0) -- (12, 0);
    \draw[->] (10.8, 0.3) -- (9.5, 3.7);
    \node at (11, 2)[scale=0.7]{$d_{>2}$};
    \draw (10.9, 0) -- (10, 0);
    \node at (8.5, 0){$\dots$};
    \draw (6.1, 0) -- (7, 0);
    \node at (6, 0){$\color{red}{\bullet}$};
    \node at (2, 0){$\color{blue}{\bullet}$};
    \draw (5.9,0) -- (2.1, 0);
    \draw[->] (5.8, 0.3) -- (4.5, 4.7);
    \node at (5.7, 2.5)[scale=0.7]{$d_{r}$};

    \node at (21, -1)[scale=0.75]{$\frac{\theta}{\rho^{v_2(w-s-1)-1}\tau^{w-s-2}}$};
    \node at (16, -1)[scale=0.75]{$\frac{\theta}{\rho^{v_2(w-s-1)-2}\tau^{w-s-2}}$};
    \node at (11, -1)[scale=0.75]{$\frac{\theta}{\rho^{s}\tau^{w-s-2}}$};
    \node at (6, -1)[scale=0.75]{$\frac{\theta}{\rho^{\psi(w-s-1)}\tau^{w-s-2}}$};
    \node at (2, -1)[scale=0.75]{$\frac{\theta}{\rho^{\psi(w-s-1)-1}\tau^{w-s-2}}$};
    \node at (-3, -1)[scale=0.75]{$\frac{\theta}{\tau^{w-s-2}}$};

    \draw (-2.9, 0) -- (-2, 0);
    \draw (1.9, 0) -- (1.5, 0);
    \node at (-0.5, 0){$\dots$};
    \node at (-3, 0){$\color{blue}{\bullet}$};

    \draw (-3, 4) -- (-2, 4);
    \node at (0,4) [scale=0.75]{$\rho$-multiplication};

    \draw[red, double, -{Stealth}] (7, 4.6) --(6, 2.8);
    \node at (7.5, 5)[scale=0.7]{\cref{arbitrarylongdiff}};
    \end{tikzpicture}
    \caption{The genuine $C_2$-equivariant Adams spectral sequence at coweight $s-w$ and filtration 0}
    \label{fig:fil0diagram}
\end{figure}

The discussion in \cref{sec:genc2diff} indicates that, for a fixed coweight $s-w\leq -2$, when $s$ is the largest, the initial differentials supported by the filtration-$0$ elements are always $d_2$-differentials; as $s$ becomes smaller, longer differentials begin to appear. An illustrative diagram for the general pattern of the genuine $C_2$-equivariant Adams spectral sequence at coweight $s-w \leq -2$ and filtration 0 is shown in \cref{fig:fil0diagram}. In accordance with \cref{fig:hurewiczim}, the red dots indicate classes that support nontrivial Adams differentials, while the blue dots represent permanent cycles. The horizontal lines indicate $\rho$-extensions from right to left. In particular, \cref{thmA} ensures that the classes 
\[\frac{\theta}{\rho^{s}\tau^{w-s-2}}, \qquad 0\leq s\leq \psi(w-s-1)-1\] 
are all permanent cycles. For the next class \[\frac{\theta}{\rho^{\psi(w-s-1)}\tau^{w-s-2}},\] which is the first element at coweight $s-w$ that is not a permanent cycle, we prove that it supports an Adams differential of length depending on the \textit{2-adic valuation} of $w-s-1$.  

\begin{theorem}[\cref{longestdiff}]\label{arbitrarylongdiff}
    Let $v_2(\_)$ denote the \textit{2-adic valuation}. If $v_2(w-s-1)\geq 5$, the element $\frac{\theta}{\rho^{\psi(w-s-1)}\tau^{w-s-2}}$ survives to the $E_r$-page for $r=v_2(w-s-1)-1$, and it supports a nontrivial $d_r$-differential.
    \tqed
\end{theorem}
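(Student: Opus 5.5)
Set $n = w-s-1$, so that $v_2(n)\ge 5$, and write $n=(2a+1)2^{c+4d}$; then $\psi(n)=2^c+8d$. The plan is to move the question to a classical Adams spectral sequence using the correspondence of \cite{ma2026borel}, as in the proof of \cref{mainthm}. Under the chain genuine $C_2$-ASS $\leftrightarrow$ Borel $C_2$-ASS $\leftrightarrow$ classical ASS of stunted projective spectra, the filtration-$0$ class $\frac{\theta}{\rho^{k}\tau^{n-1}}$ should correspond to the bottom-cell class in $\Ext^{0}$ of a stunted projective spectrum of the form $\bbR P^{-n+k}_{-n}$ (up to suspension and Spanier--Whitehead duality). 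The question whether this class survives is then the question whether the top cell of $\bbR P_{-n}^{-n+k}$ splits off, that is, whether the relevant attaching map is null; this is precisely what produced $\psi(n)$ in \cref{mainthm}. I would first record exactly which stunted projective spectrum and which class correspond to $\frac{\theta}{\rho^{\psi(n)}\tau^{n-1}}$. I would also check that the correspondence identifies $d_r$-differentials with $d_r$-differentials, with no shift in length, at least in the relevant range.

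The second step is to identify the obstruction. By Adams' solution of the vector field problem, $\bbR P^{n-1}_{n-\psi(n)-1}$ is coreducible, while $\bbR P^{n-1}_{n-\psi(n)-2}$ is not. The obstruction to coreducibility at the next cell is detected in the image of $J$ in stem $\psi(n)$ (or a nearby stem), and it is hit by the attaching map. Concretely, the $d_r$ on the bottom class has target the Adams representative of this attaching map. Because $\psi(n)\equiv 0,1,3,7 \pmod 8$, that map lies in im$J$ of order $2^{m}$ in stem $8d-1$ or thereabouts. Through the James periodicity and $J$-homomorphism analysis, its image is a multiple $2^{j}$ of a generator, with $j$ governed by $v_2(n)$ relative to the order of im$J$.

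The higher the $2$-divisibility of an im$J$ element, the higher its Adams filtration: the $2^j$-multiple of the generator of im$J$ in stem $8d-1$ sits at filtration roughly $j+$const, on the $h_0$-tower beneath the top. The third step is therefore to compute that the target has Adams filtration exactly $v_2(n)-1$. I would do this by comparing with the known vanishing line and $h_0$-towers in the ASS for stunted projective spaces (Mahowald's computations). The hypothesis $v_2(n)\ge 5$ should be what guarantees that the target lies in the clean part of the im$J$ tower, avoiding low-stem exceptions such as $\eta$, $\nu$, $\sigma$-type phenomena.

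The main obstacle is proving that the differential really has length $v_2(n)-1$ and is nonzero. One must show that the class survives to $E_r$, so that no shorter differential kills it or hits it earlier, and that the target is not already a boundary. For this I would use the Generalized Leibniz Rule and the Generalized Mahowald Trick of \cite{linwangxu2025lastkervaire}. First I would exhibit the cell structure as a cofiber of a map that is $2^{j}$ times a generator. Then I would apply the Mahowald trick to convert the "multiplication by $2^j$" into a differential whose length equals the jump in filtration, namely from $h_0^{\,\cdot}$-towers. Survival to $E_r$ would be argued by sparseness: $E_2$ of the stunted projective spectrum in the relevant stem is concentrated in the $h_0$-tower, and the lower tower classes are permanent cycles.
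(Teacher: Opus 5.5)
Your outer framework matches the paper's. You pass via \cref{prop:1[s-w]} to the class $1[b]$, $b=s-w$, in the Adams spectral sequence of $\mathbb{R}P^\infty_{b-c}$ with $c=\psi(n)$, and read off the obstruction from Adams' vector field theorem. The gap is the mechanism you propose for the length, which is wrong.

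By \cref{prop:RPattachingmap}, the attaching map of the top cell to the $(b-c)$-cell is a \emph{generator} $j_{c-1}$ of $\pi_{c-1}J$, not a $2^j$-multiple of one. The valuation $v_2(n)$ enters only through the stem $c-1$, which is $8k$, $8k+1$, $8k+3$, $8k+7$ according as $v_2(n)=4k,4k+1,4k+2,4k+3$. (It is $\psi(n)-1$, not $\psi(n)$, that is $\equiv 0,1,3,7 \pmod 8$.)

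In the two-cell subcomplex $\Sigma^{b-c}\Cof(j_{c-1})$ this gives $d_m(1[b])=a_{c-1}[b-c]$, with $m$ the Adams filtration of $a_{c-1}$. But $m=v_2(n)-1$ holds only for $v_2(n)\equiv 0,1,2\pmod 4$ with $v_2(n)\neq 5$.

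Even in those cases, the real content is showing that the target survives the algebraic Atiyah--Hirzebruch spectral sequence of $\mathbb{R}P^\infty_{b-c}$, or that $\Ext^{b-1,>m}$ vanishes. That requires excluding specific sources such as $P^{k-1}h_1^3[b-c+7]$ by explicit Lambda-complex cycles. A sparseness claim about $h_0$-towers does not do this.

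For $v_2(n)=5$ the attaching map $\eta^2\sigma$ has filtration $3$, yet the differential is $d_4(1[b])=h_1c_0[b-10]$. This is because $\nu^3=\eta^2\sigma+\eta\epsilon$, and the $Sq^4$ on the $(b-10)$-class makes $\eta^2\sigma$ and $\eta\epsilon$ agree after composing into $\mathbb{R}P^{b-1}_{b-10}$.

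The decisive failure is $v_2(n)=4k+3$.
\begin{itemize}
\item There $a_{8k+7}$ has filtration $4k+1-v_2(k+1)<4k+2$.
\item Since $b-c$ is odd, the $(b-c+1)$-cell is attached to the $(b-c)$-cell by degree $2$. So $a_{8k+7}[b-c]$ and all its $h_0$-multiples (your ``$2^j$-multiples'') are killed by algebraic Atiyah--Hirzebruch $d_1$'s.
\item Hence the target cannot be a representative of the attaching map at all. The actual differential is $d_{4k+2}(1[b])=h_2P^kh_2[b-c+1]$, landing on the next cell up.
\end{itemize}

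Proving this needs an input your plan lacks: the classical differential $d_{r_k}(a'_{8k+7})=h_2P^kh_2$, where $h_0a'_{8k+7}=a_{8k+7}$ and $r_k=v_2(k+1)+2$. This is \cref{diffofimj}, proved with synthetic lifts and the $1/5$-vanishing line for $\Cof(2)$. The argument then runs as follows.
\begin{itemize}
\item The Generalized Mahowald Trick, applied on the three-cell complex $T=\Cof(S^{b-1}\to S^{b-c}\to\Sigma^{b-c}\Cof(2))$, converts that differential into an $(i,E_\infty)$-extension of length $r_k-1$ from $a_{8k+7}[b-c]$ to $h_2P^kh_2[b-c+1]$.
\item The Generalized Leibniz Rule composes this with $d_m(1[b])=a_{8k+7}[b-c]$ to give $d_{m+r_k-1}=d_{4k+2}$ in $T$.
\item This pushes forward to $\mathbb{R}P^\infty_{b-c}$ because $\Ext^{b-1,>4k+2}$ vanishes there.
\end{itemize}

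So you invoke the right tools, but for the wrong mechanism. Without the image-of-$J$ differential, your third step would produce the wrong length in this case.
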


If $v_2(w-s-1)\leq 3$, then $\psi(w-s-1)=2^{v_2(w-s-1)}$, and \cref{thmA} implies that all filtration-$0$ elements of this coweight are permanent cycles. If $v_2(w-s-1)=4$, then $\psi(w-s-1)=9$, and we will show in \cref{ex:d2} that $\frac{\theta}{\rho^s\tau^{w-s-2}}$ supports a $d_2$-differential for each $9\leq s \leq 15$. Together with \cref{arbitrarylongdiff} and an example of a $d_3$-differential in \cref{ex:d3}, this implies \cref{thmB}.

Based on low-dimensional calculations, we propose the following conjecture for some of the major intermediate differentials. 

\begin{conjecture}
    Let $M$ denote the algebraic Mahowald invariant for the 2-primary Adams spectral sequence \cite{mahowaldravenel1993root}. For $j \gg k> 0$, 
    \[d_{k+1}(\frac{\theta}{\rho^{2^{j-k}+s(k)}\tau^{2^j-1}})=\frac{\theta}{\tau^{2^j+2^{j-k-1}+c(k)-1}}M(h_0^k)h_{j-k},\]
    where $s(k)$ and $c(k)$ are the stem and coweight of $M(h_0^k)$, with values given in the table below.
    \begin{center}\begin{tabular}{>{$}l<{$}>{$}l<{$}>{$}l<{$}}
        \hline
        k & s(k) & c(k) \\
        \hline
        4n+1 & 8n+1 & 4n \\
        4n+2 & 8n+2 & 4n \\
        4n+3 & 8n+3 & 4n \\
        4n+4 & 8n+7 & 4n+3 \rlap{\hspace{13em} \tqed} \\
        \hline
    \end{tabular}   \end{center} 
\end{conjecture}

We prove \cref{arbitrarylongdiff} by analyzing differentials in the classical Adams spectral sequences of various stunted projective spectra. One of the key steps uses the techniques of the Generalized Leibniz Rule and the Generalized Mahowald Trick, which provide a framework for propagating Adams differentials and hidden extensions via synthetic homotopy. For concrete statements and a general background on synthetic homotopy theory, see, for example, \cite[Section~3, Section~6]{linwangxu2025lastkervaire}.

Another crucial lemma in the proof of \cref{arbitrarylongdiff}, which may be of independent interest, identifies the differential supported by the class one $h_0$-step below the class detecting the image-of-$J$ generator at stem $8k+7$ in the classical Adams spectral sequence of $S^0$. The order of the image-of-$J$ \cite{adams1966on} makes it immediate to locate its generator in the $h_0$-tower in $\Ext(S^0)$, and therefore the element admitting an $h_0$-extension to the generator must support an Adams differential. Evidence from both the computation below the 90-stem \cite{isaksenwangxu2023stable} and the Adams spectral sequence of image-of-$J$ \cite{bruner2022adams} indicates that the target should be $P^k h_2^2$, while a concrete proof does not seem to appear in the literature. We explicitly confirm this family of differentials.

\begin{theorem}[\cref{diffofimj}]\label{imjdiffintro}
    For $k\ge1$, let $r_k:= v_2(k+1)+2$. Let $a_{8k+7}$ denote the element in $\Ext(S^0)$ that detects the generator of $\pi_{8k+7}J$. Then there exists an element $a_{8k+7}'$ satisfying $a_{8k+7}'\cdot h_0=a_{8k+7}$ such that it survives to the $E_{r_k}$-page of the Adams spectral sequence and supports a nontrivial $d_{r_k}$-differential
    \[d_{r_k}(a_{8k+7}')=h_2\cdot P^kh_2. \tag*{$\triangleleft$}\]
\end{theorem}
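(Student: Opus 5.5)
The plan is to combine the known structure of the image-of-$J$ part of $\Ext(S^0)$ with the order of $\pi_{8k+7}J$ and a comparison to the $J$-spectrum (or $ko$/$j$) Adams spectral sequence. Write $8k+7$ as the stem. In $\Ext(S^0)$ the $h_0$-tower in stem $8k+7$ is generated by $P^k h_0^i h_3$-type classes (for $k\ge 1$ the tower has length $v_2(k+1)+4$, the bottom being $P^{k-1}h_0^{?}\cdots$, in the Adams periodicity range). By Adams' determination of $|\im J|=2^{v_2(8k+8)+1}=2^{v_2(k+1)+4}$, the generator of $\pi_{8k+7}J$ is detected exactly $r_k-1=v_2(k+1)+1$ steps... more precisely, comparing the length of the $h_0$-tower in Ext with $v_2(|\pi_{8k+7}J|)$ shows that exactly one element $a'_{8k+7}$ above the detecting class $a_{8k+7}$ (with $h_0a'_{8k+7}=a_{8k+7}$) cannot be a permanent cycle. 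It must therefore support a differential, since otherwise it would detect an element $x$ with $2x$ detected by $a_{8k+7}$, contradicting that $a_{8k+7}$ detects a generator of a cyclic group (using that the stem $8k+7$ is otherwise well understood in this filtration range, via Mahowald's vanishing line/$bo$-resolution).

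Next, identify the target. In stem $8k+6$ the only candidates above the filtration of $a'_{8k+7}$ in the relevant range are $h_2\cdot P^kh_2=P^kh_2^2$ and classes in its $h_1$-family; by sparseness of $\Ext$ above the vanishing-line-adjacent region (Adams periodicity $P$ is an isomorphism there), the filtration difference between $a'_{8k+7}$ and $P^kh_2^2$ is exactly $r_k=v_2(k+1)+2$, and there are no other nonzero classes in stem $8k+6$ between these filtrations. Hence it suffices to show $a'_{8k+7}$ survives to $E_{r_k}$ (automatic by degree reasons, since there are no possible earlier targets) and that $P^kh_2^2$ is not hit earlier or killed otherwise, so the differential must be $d_{r_k}(a'_{8k+7})=P^kh_2^2$.

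To pin this down rigorously I would map to $j$ (the connective image-of-$J$ spectrum) via the unit $S^0\to j$, whose Adams spectral sequence is completely known (cf.\ \cite{bruner2022adams}); there $P^kh_2^2$ is nonzero and the analogous differential $d_{r_k}$ is forced since $\pi_{8k+6}j=0$ for $k\geq1$ (well, $\pi_{8k+6}j$ vanishes, so $P^kh_2^2$ must be killed, and the only possible source is the image of $a'_{8k+7}$). Naturality then lifts this: the image of $P^kh_2^2$ in $\Ext(j)$ is nonzero, so if $d_{r_k}(a'_{8k+7})$ were zero in $S^0$ its image in $j$ would survive to $E_{r_k+1}$, contradiction. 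Alternatively, if $j$'s spectral sequence is less convenient, use the Generalized Leibniz Rule from \cite{linwangxu2025lastkervaire} with $d_{r_k}$ on a product $a'\cdot h_0^{m}$ propagated from a known low case ($k=1$: $d_3(h_0^2h_4)$-type; $h_0$-divided versions from the computations in \cite{isaksenwangxu2023stable}) through $P$-periodicity.

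The main obstacle is the naturality argument: ensuring the element $a'_{8k+7}$ of $\Ext(S^0)$ maps to the correct source in $\Ext(j)$ and that no intervening differential in $S^0$ (possibly from classes not in the image-of-$J$ pattern) could truncate it earlier or kill $P^kh_2^2$ first. I would handle this with the Adams vanishing/periodicity line: above slope roughly $1/5$, $\Ext(S^0)\to\Ext(j)$ is an isomorphism, and both $a'_{8k+7}$ and $P^kh_2^2$ lie in that region for $k\ge1$, so the spectral sequences agree there through the relevant pages.
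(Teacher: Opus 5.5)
Your outline matches the paper's: non-permanence of $a'_{8k+7}$ comes from the non-$2$-divisibility of $j_{8k+7}$, and the target is then read off. But the step that carries the real content, namely that $a'_{8k+7}$ survives to $E_{r_k}$, is not established.

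You say this is automatic ``by degree reasons'' because stem $8k+6$ has no classes strictly between the filtrations of $a'_{8k+7}$ and $P^kh_2^2$. That is false whenever $k$ is odd, i.e.\ whenever $r_k\ge 3$. The $d_0$-family sits exactly there: $P^{k-1}d_0$ in filtration $4k$, $h_0P^{k-1}d_0$ in filtration $4k+1$, and $h_0^2P^{k-1}d_0=P^kh_2^2$. Since $a'_{8k+7}$ has filtration $4k+2-r_k$, these are legitimate targets of a $d_{r_k-2}$ or a $d_{r_k-1}$.
\begin{itemize}
\item For $k=1$, $h_0d_0$ is a possible $d_2$-target of $h_0^2h_4$. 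It is in fact killed by the neighbouring tower class, $d_3(h_0h_4)=h_0d_0$.
\item For $k=3$, $P^2d_0$ and $h_0P^2d_0$ are possible $d_2$- and $d_3$-targets of $h_0^9h_5$.
\end{itemize}
Your degree count only works for even $k$, where $r_k=2$.

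A repair along your lines would use $h_0$-linearity. If $d_r(a'_{8k+7})=y\neq 0$ with $r<r_k$, then $h_0y=d_r(a_{8k+7})=0$ on $E_r$. But $h_0\cdot P^{k-1}d_0$ and $h_0\cdot h_0P^{k-1}d_0=P^kh_2^2$ are still nonzero on the relevant pages, a contradiction. This needs a complete description of $\Ext$ in stems $8k+6$ and $8k+7$ in filtrations $\ge 4k+2-r_k$, including that nothing but the permanent cycles $h_0^ia_{8k+7}$ can hit those classes early. You don't supply this, and you partly misstate it: the $h_0$-tower in stem $8k+7$ is longer than $v_2(k+1)+4$, since it must contain $a'_{8k+7}$ (stem $15$ has $h_4,\dots,h_0^7h_4$).

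Your fallbacks don't close the gap.
\begin{itemize}
\item In $j$, the vanishing $\pi_{8k+6}j=0$ only says the $d_0$-family must die. It does not say which tower class kills $P^kh_2^2$, or with which length. So ``the only possible source is the image of $a'$'' faces the identical problem.
\item Transporting a $j$-differential back to $S^0$ by naturality presupposes that $a'_{8k+7}$ already survives to $E_{r_k}$ in $S^0$, or else a precise page-by-page comparison that you haven't set up.
\item Propagation ``through $P$-periodicity'' cannot work, since $r_k$ depends on $v_2(k+1)$ and is not periodic in $k$.
\end{itemize}

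The paper gets survival from a different source. For $k\ge 4$, the $v_1$-banded vanishing line of $\Cof(2)$ forces the image of $j_{8k+7}$ in $\pi_{8k+7}\Cof(2)$ to have Adams filtration $\ge 4k+2$, a jump of $r_k-1$. Synthetically, $\hat g(\hat\jmath_{8k+7})$ is $\lambda^{r_k-1}$-divisible. So $\hat\jmath_{8k+7}$ lifts to the fiber of $S^{0,0}\to\nu\Cof(2)/\lambda^{r_k-1}$. The image of this lift in $S^{0,1}/\lambda$ is an $a'_{8k+7}$ with $h_0a'_{8k+7}=a_{8k+7}$ that lifts to $S^{0,1}/\lambda^{r_k-1}$, i.e.\ it is a $d_{r_k-1}$-cycle. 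Non-permanence, together with the fact that $P^kh_2^2$ is the only class of filtration $\ge 4k+2$ in stem $8k+6$, then forces $d_{r_k}(a'_{8k+7})=h_2P^kh_2$. This needs no analysis of the intermediate $d_0$-family classes. The cases $k=1,2,3$ are read off from known differentials.
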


As an immediate consequence, we obtain the following.

\begin{cor}\label{cor:arblongdiffofass}
    In the 2-primary classical Adams spectral sequence of $S^0$, there exist nontrivial $d_r$-differentials for any $r\ge 2$.
\tqed
\end{cor}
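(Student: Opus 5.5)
Fix $r\ge 2$. By \cref{imjdiffintro}, it suffices to exhibit some $k\ge 1$ with $v_2(k+1)+2=r$. Take $k=2^{r-2}-1$; then $v_2(k+1)=r-2$ and $r_k=r$. For $r=2$ this forces $k=0$, which is outside the range of \cref{imjdiffintro}, so we instead take $k=2$ (indeed any even $k\ge 2$), which gives $r_k=2$. Alternatively, one can cite the classical $d_2(h_4)=h_0h_3^2$.

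For each $r\ge 3$, take $k=2^{r-2}-1\ge 1$. \cref{imjdiffintro} then supplies an element $a_{8k+7}'$ that survives to $E_r$ and supports the nontrivial differential $d_r(a_{8k+7}')=h_2\cdot P^kh_2$. This is a nonzero $d_r$ in the $2$-primary Adams spectral sequence of $S^0$.

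For $r=2$, either the case $k=2$ of \cref{imjdiffintro} ($v_2(3)=0$, so $r_2=2$) or the classical Adams differential $d_2(h_4)=h_0h_3^2$ gives a nonzero $d_2$.

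The corollary itself involves no obstacle: all the work is contained in \cref{imjdiffintro}. The only point requiring care is that the parametrization by $v_2(k+1)+2$ reaches every $r\ge 2$ with $k\ge 1$, and the argument above checks exactly this.
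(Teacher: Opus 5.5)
Your proposal is correct and matches the paper, which states the corollary as an immediate consequence of \cref{imjdiffintro}. The only content is that $r_k=v_2(k+1)+2$ takes every value $r\ge 2$ for some $k\ge 1$. Your choices verify this: $k=2^{r-2}-1$ for $r\ge 3$, and $k=2$ for $r=2$, which is the paper's base case $d_2(h_0 i)=h_2P^2h_2$.
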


The analogous odd primary result was first proved by Andrews \cite{andrews2015v1}, by consulting the Miller square \cite{miller1981relations} and studying the $v_1$-periodic Adams spectral sequence. The proof of the 2-primary statement is more subtle, as the Cartan--Eilenberg spectral sequence does not collapse in this setting.

\subsection{Notation}
\begin{itemize}
    \item $\iota_E$: the Hurewicz map for a ring spectrum $E$;
    \item $\sigma$: the $C_2$-sign representation;
    \item $\pi_{*, *}^{C_2}(\_)$: the $RO(C_2)$-graded $C_2$-equivariant stable homotopy groups;
    \item $a_\sigma$: the Euler class of $\sigma$ in $\pi^{C_2}_{-1,-1}S^0$;
    \item $\underline{\bbF_2}:$ the constant Mackey functor over $\bbF_2$;
    \item $\underline{\bbZ}:$ the constant Mackey functor over $\bbZ$;
    \item $\underline{A}:$ the $C_2$-equivariant Burnside Mackey functor;
    \item $H \underline{\bbF_2}$: the genuine $C_2$-equivariant Eilenberg--MacLane spectrum valued in $\underline{\bbF_2}$;
    \item $H \underline{\bbZ}$: the genuine $C_2$-equivariant Eilenberg--MacLane spectrum valued in $\underline{\bbZ}$;
    \item $H \underline{A}$: the genuine $C_2$-equivariant Eilenberg--MacLane spectrum valued in $\underline{A}$;
    \item $H\bbF_2^\bbR$: the $\bbR$-motivic Eilenberg--MacLane spectrum with $\bbF_2$-coefficient;
    \item $NC$: the negative cone of $H \underline{\bbF_2}_{*, *}$;
    \item $\Ext^{*, *, *}_\bbC$: the $E_2$-page of the 2-primary $\bbC$-motivic Adams spectral sequence;
    \item $\Ext^{*, *, *}_\bbR$: the $E_2$-page of the 2-primary $\bbR$-motivic Adams spectral sequence;
    \item $\Ext_{C_2}^{*, *, *}$: the $E_2$-page of the 2-primary genuine $C_2$-equivariant Adams spectral sequence;
    \item $\Ext_{h}^{*, *, *}$: the $E_2$-page of the 2-primary Borel $C_2$-equivariant Adams spectral sequence;
    \item $\Ext^{*, *}$: the $E_2$-page of the classical 2-primary Adams spectral sequence;
    \item $\leftindex^{\bbR}E_r^{*, *, *}$: the $E_r$-page of the 2-primary $\bbR$-motivic Adams spectral sequence;
    \item $\leftindex^{C_2}E_r^{*, *, *}$: the $E_r$-page of the 2-primary genuine $C_2$-equivariant Adams spectral sequence;
    \item $\leftindex^{h}E_r^{*, *, *}$: the $E_r$-page of the 2-primary Borel $C_2$-equivariant Adams spectral sequence;
    \item $E_r^{*, *}$: the $E_r$-page of the classical 2-primary Adams spectral sequence;
    \item $v_2(\_)$: the 2-adic valuation function;
    \item $\psi(n)$: the $n$-th Radon--Hurwitz number;
    \item $J$: the image-of-$J$ spectrum.
\end{itemize}

\subsection{Organization}
In \cref{sec:prelim} we review some basics of $C_2$-equivariant Adams spectral sequences. We prove \cref{mainthm} in \cref{sec:hurewiczim} by determining the permanent cycles on the $0$-line of the genuine $C_2$-Adams spectral sequence. In \cref{sec:genc2diff}, we further explore the patterns of the genuine $C_2$-equivariant Adams differentials on the $0$-line elements and prove \cref{thmB} and \cref{arbitrarylongdiff}. The proof relies on \cref{imjdiffintro}, which we will discuss in \cref{sec:imjdiff}. The geometric proof of \cref{thmA} is given in \cref{sec:geoconstr}. In \cref{sec:HZHA}, we prove \cref{mainHurewiczofHZHA} as an application of \cref{mainthm}.

\subsection*{Acknowledgment} The authors thank the American Institute of Mathematics for their hospitality during the workshop, where the collaboration on this project started. The authors thank the other co-organizers of the AIM workshop on Computations in Stable Homotopy Theory, Eva Belmont, Hana Jia Kong, XiaoLin Danny Shi, for organizing an excellent meeting and for creating a stimulating and supportive environment that made this collaboration possible. The authors thank Bert Guillou and Dan Isaksen for helpful discussions at the AIM workshop and for comments on earlier drafts. The authors thank William Balderrama for helpful conversations. The sixth author is partially supported by the AMS Centennial Research Fellowship.

\section{Preliminaries on \texorpdfstring{$C_2$-equivariant}{C2 equivariant} homotopy}\label{sec:prelim}

We review the basics of the $\bbR$-motivic, genuine $C_2$-equivariant, and Borel $C_2$-equivariant Adams spectral sequences; references for this material may be found in \cite{duggerisaksen2010motivic,duggerisaksen2017low, guillouhillisaksenravenel2020cohomology, ma2026borel}.

Let $H\bbF_2^\bbR$ and $H\bbF_2^h$ denote the $\bbR$-motivic and Borel $C_2$-equivariant Eilenberg--MacLane spectra with $\bbF_2$-coefficients, respectively; let $\mathcal{A}^{\bbR}$, $\mathcal{A}^{C_2}$, and $\mathcal{A}^{h}$ denote the $\bbR$-motivic, genuine $C_2$-equivariant, and Borel $C_2$-equivariant Steenrod algebras, respectively. Then, the $\bbR$-motivic, genuine $C_2$-equivariant, and Borel $C_2$-equivariant Adams spectral sequences of the spheres are
\begin{align*}
\leftindex^{\bbR}E_2^{s, f, w}&=\Ext_{\bbR}^{s,f,w}:=\Ext_{\calA^{\bbR}}^{s,f,w}({H\bbF_2}^\bbR_{*, *}, {H\bbF_2}^\bbR_{*, *})\implies (\pi_{s,w}^\bbR)^\wedge_2\\
\leftindex^{C_2}E_2^{s, f, w}&= \Ext_{C_2}^{s,f,w}:=\Ext_{\calA^{C_2}}^{s,f,w}({H\underline{\bbF_2}}_{*, *}, {H\underline{\bbF_2}}_{*, *}) \implies (\pi_{s,w}^{C_2})^\wedge_2\\
\leftindex^{h}E_2^{s,f,w} &= \Ext_{h}^{s,f,w} := \Ext_{\calA^h}^{s,f,w}({H\bbF_2}^h_{*, *}, {H\bbF_2}^h_{*, *})\implies (\pi_{s,w}^{C_2})^\wedge_2
\end{align*}
where $f$ denotes the Adams filtration, and $(s,w)$ in the equivariant case aligns with the chosen $RO(C_2)$-grading, while $(s, w)$ in the $\bbR$-motivic case is the pair of stem and motivic weight. Similarly, we use the grading $\Ext^{s, f}$ and $\Lambda^{s, f}$ for the classical $\Ext$-group and the Lambda algebra, where $s$ is the stem and $f$ is the Adams filtration. Our grading convention follows that of \cite{duggerisaksen2010motivic, isaksenwangxu2023stable}.

\begin{prop}[\cite{guillouhillisaksenravenel2020cohomology, belmontxuzhang2024reduced}]\label{rhobockstein}
~\begin{enumerate}
    \item The splitting $H\underline{\bbF_2}_{*, *}  \cong {H\bbF_2}^\bbR_{*, *} \oplus NC$ induces a splitting
    \[\Ext_{C_2}=\Ext_\bbR\oplus \Ext_{NC}\]
    where $\Ext_{NC} = \Ext_{\mathcal{A}^{C_2}}(H\underline{\bbF_2}_{*, *} , NC)$.
    \item There is a $\rho$-Bockstein spectral sequence converging to $\Ext_{C_2}$ such that under the splitting in part (1), the spectral sequence decomposes as
    \[E_1^+=\Ext_\bbC[\rho]\implies \Ext_\bbR\]
    \[E_1^-\implies \Ext_{NC}\] 
    where $\Ext_\bbC$ is the $E_2$-page of the $\bbC$-motivic Adams spectral sequence. A Bockstein differential $d_r$ takes a class $x$ of degree $(s,f,w)$ to a class $d_r(x)$ of degree $(s-1, f+1, w)$.
    \item $E_1^-$ is generated over $\bbF_2$ by elements of the following two types:
    \begin{itemize}
        \item Elements of the form $\frac{\theta}{\rho^a \tau^b}x$ where $a, b \geq 0$, and $x$ is an element of $\Ext_\bbC$ that is $\tau$-free and not divisible by $\tau$. If $x$ has degree $(s, f, w)$ in $\Ext_\bbC$, then $\frac{\theta}{\rho^a \tau^b}x$ has degree $(s+a,f, w+a+b+2)$.
        \item Elements of the form $\frac{Q}{\rho^a \tau^b}x$ where $0\leq a, 0\leq b\leq k$, and $x$ is an element of $\Ext_\bbC$ that is $\tau$-torsion and divisible by $\tau^k$ but not $\tau^{k+1}$. If $x$ has degree $(s, f, w)$ in $\Ext_\bbC$, then $\frac{Q}{\rho^a \tau^b}x$ has degree $(s+a+1,f-1, w+a+b+1)$. \tqed
\end{itemize}
\end{enumerate}
\end{prop}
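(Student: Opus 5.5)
This proposition is cited from \cite{guillouhillisaksenravenel2020cohomology, belmontxuzhang2024reduced}, so the plan is to reconstruct their argument rather than to find a new one.

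\textbf{Part (1).} The first step is to check that $NC$ is an $\mathcal{A}^{C_2}$-submodule of $H\underline{\bbF_2}_{*,*}$. Its complement ${H\bbF_2}^\bbR_{*,*}=\bbF_2[\rho,\tau]$ should also be a submodule, not merely a quotient. Both facts follow from the explicit Steenrod action on the coefficients: $Sq^{2^i}$ acts on $\tau^n$ by the motivic formulas, and on $\frac{\theta}{\rho^k\tau^n}$ by the dual formulas. These operations preserve each summand because of degree reasons and the relations $\rho\theta=\tau\theta=0$.

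Alternatively, one can use the cofiber of $EC_{2+}\to S^0$. Recall that $\widetilde{E}C_2\wedge H\underline{\bbF_2}$ has homotopy $\bbF_2[\rho^{\pm1},\tau]$. The positive cone then maps isomorphically onto its image there, and this realizes the splitting as one of $\mathcal{A}^{C_2}$-modules. Since $\Ext$ is additive in the second variable, the splitting $\Ext_{C_2}=\Ext_\bbR\oplus\Ext_{NC}$ follows. Here one uses the identification $\Ext_{\mathcal{A}^{C_2}}(H\underline{\bbF_2}_{*,*},\bbF_2[\rho,\tau])\cong\Ext_\bbR$, which comes from the fact that $\mathcal{A}^{C_2}$ is the base change of $\mathcal{A}^\bbR$ along $\bbF_2[\rho,\tau]\to H\underline{\bbF_2}_{*,*}$. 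This requires flatness, which holds because $\mathcal{A}^{C_2}$ is free over the coefficients.

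\textbf{Part (2).} Filter the cobar complex by powers of $\rho$. Since $\rho$ is primitive, the $\rho$-adic filtration is a filtration by subcomplexes. The associated graded is computed by setting $\rho=0$, giving the $\bbC$-motivic Steenrod algebra with coefficients tensored up.
\begin{itemize}
\item On the positive summand this yields $\Ext_\bbC[\rho]$.
\item On $NC$ the filtration runs through the quotients $NC/\rho$ shifted along $\rho^{-1}$. Convergence holds because, in each fixed tridegree, only finitely many filtration steps are nonzero.
\end{itemize}
The stated degree of $d_r$ is forced by the grading conventions: $\rho$ has degree $(-1,0,-1)$, and a differential raises cohomological degree by one.

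\textbf{Part (3).} Here $E_1^-=\Ext_{\mathcal{A}^\bbC}(\bbF_2[\tau],\, \bigoplus_a \rho^{-a}\,\theta\,\bbF_2[\tau]^\vee)$, where $\bbF_2[\tau]^\vee=\bbF_2\{\tau^{-b}\}$ is the injective-hull type dual of $\bbF_2[\tau]$. Write $\bbF_2[\tau]^\vee$ via the short exact sequence
\[
0\to\bbF_2[\tau]\to\bbF_2[\tau^{\pm1}]\to \Sigma\,\bbF_2[\tau]^\vee\to0,
\]
suitably shifted by $\theta$, and pass to the long exact sequence in $\Ext_\bbC$. Using $\Ext_\bbC[\tau^{-1}]=\Ext[\tau^{\pm1}]$, the classes split into two types:
\begin{itemize}
\item $\tau$-free, non-$\tau$-divisible classes $x$ give the classes $\frac{\theta}{\tau^b}x$, for all $b\ge0$;
\item $\tau$-torsion classes $x$, divisible by exactly $\tau^k$, contribute through the connecting homomorphism the classes $\frac{Q}{\tau^b}x$ for $0\le b\le k$. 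These carry the shift $(+1,-1)$ in $(s,f)$.
\end{itemize}
The degrees are then read off from $|\theta|$, $|\rho|$, $|\tau|$ and the shift of the connecting map.

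\textbf{Main obstacle.} I expect the hardest step to be this last computation. It requires showing that the long exact sequence gives exactly the stated basis with no extra extension issues, i.e.\ making the range $0\le b\le k$ precise and identifying the degree of $Q$.
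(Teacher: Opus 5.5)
\textbf{Part (1): the splitting is not one of $\mathcal{A}^{C_2}$-modules.} You claim that the positive cone $\bbF_2[\rho,\tau]$ is an $\mathcal{A}^{C_2}$-submodule of $H\underline{\bbF_2}_{*,*}$. This is false. The algebra $\mathcal{A}^{C_2}$ contains the coefficients $H\underline{\bbF_2}_{*,*}$, acting by left multiplication, and $\theta\cdot 1=\theta\in NC$.

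Only $NC$ is a submodule: it is an ideal, and it is closed under the $Sq^i$ for degree reasons. The extension $0\to NC\to H\underline{\bbF_2}_{*,*}\to\bbF_2[\rho,\tau]\to 0$ does not split over $\mathcal{A}^{C_2}$. Any section must send $1\mapsto 1$, and then it cannot commute with $\theta$. Your $\widetilde{E}C_2$ remark only recovers the quotient map, not a section. So additivity of $\Ext_{\mathcal{A}^{C_2}}$ in the second variable cannot be applied.

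The missing step is the change of rings used in the cited sources. The dual Hopf algebroid $\mathcal{A}^{C_2}_*\cong H\underline{\bbF_2}_{*,*}\otimes_{\bbF_2[\rho,\tau]}\mathcal{A}^{\bbR}_*$ is the one attached to the $\mathcal{A}^{\bbR}_*$-comodule algebra $H\underline{\bbF_2}_{*,*}$. Hence the cobar complexes agree, and $\Ext_{\mathcal{A}^{C_2}}(H\underline{\bbF_2}_{*,*},M)\cong\Ext_{\mathcal{A}^{\bbR}}({H\bbF_2}^{\bbR}_{*,*},M)$ for any $\mathcal{A}^{C_2}$-module $M$ restricted to $\mathcal{A}^{\bbR}$. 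Only after this restriction does $H\underline{\bbF_2}_{*,*}$ split, as an $\mathcal{A}^{\bbR}$-module, and there your degree argument is correct.

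The flatness you cite is also the wrong one. Naive base change along $\bbF_2[\rho,\tau]\to H\underline{\bbF_2}_{*,*}$ would need $H\underline{\bbF_2}_{*,*}$ to be flat over $\bbF_2[\rho,\tau]$. That fails, because $NC$ is $\rho$- and $\tau$-torsion. What is actually used is that $\mathcal{A}^{\bbR}_*$ is free over $\bbF_2[\rho,\tau]$.

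\textbf{Part (2): the $\rho$-adic filtration is trivial on $NC$.} Filtering by powers of $\rho$ does nothing on the negative cone. $NC$ is $\rho$-divisible, so $\rho^a NC=NC$ and $NC/\rho=0$. The correct filtration is the increasing one $F_a=\ker(\rho^{a+1}\colon NC\to NC)$. It is a filtration by subcomodules because $\rho$ is primitive.

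Its subquotients are $\rho^{-a}\ker(\rho|_{NC})=\rho^{-a}\bbF_2\{\theta/\tau^b\}$. Since these are killed by $\rho$, their $\Ext$ over $\mathcal{A}^{\bbR}$ equals their $\Ext$ over $\mathcal{A}^{\bbC}$. This is exactly the $E_1^-$ you write down in Part (3). Convergence holds because, in each tridegree, $a$ is bounded by the stem ($\Ext_\bbC$ vanishes in negative stems).

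With these two repairs your outline matches the cited argument. Part (3), via the $\tau$-localization sequence and its connecting map, is the standard computation. No extension problem arises there, since the statement only asserts an $\bbF_2$-spanning set.
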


The relationship between the genuine and the Borel $C_2$-equivariant Adams spectral sequences is as follows.

\begin{prop}[{\cite[Theorem~6.2]{ma2026borel}}]\label{genuinevsborel}
    \[\Ext_{h}^{s,f,w}\cong H\left(\Ext_{\bbR}^{s,f,w}\oplus\Ext_{NC}^{s,f-1,w},\delta\right),\]
    where $\delta$ is the composition map
    \[\Ext_{NC}^{s,f-1,w}\hookrightarrow\Ext_{C_2}^{s,f-1,w}\xrightarrow{d_2}\Ext_{C_2}^{s-1,f+1,w}\to\Ext_{\bbR}^{s-1,f+1,w}. \tag*{$\triangleleft$}\]
\end{prop}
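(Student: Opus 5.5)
The plan is to compare the two Adams towers through the Borel completion map $H\underline{\bbF_2}\to H\bbF_2^h=F(EC_{2+},H\underline{\bbF_2})$, and to realize the Borel $E_2$-page as the homology of a two-term complex built from genuine data.

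\emph{Coefficients.} On coefficients the map $H\underline{\bbF_2}_{*,*}=\bbF_2[\rho,\tau]\oplus NC\to \bbF_2[\rho,\tau^{\pm1}]$ is the inclusion on the positive cone and kills $NC$. Its cokernel is $\bbF_2[\rho,\tau^{\pm1}]/\bbF_2[\rho,\tau]$, which as a module over the positive cone is $NC$ shifted by one, with $\frac{\theta}{\rho^k\tau^n}$ corresponding to $\rho^{-k}\tau^{-n-1}$. So there is a cofiber sequence
\[
\Sigma^{-1}\text{``}NC\text{''}\longrightarrow H\bbF_2^{\bbR}\text{-part}\longrightarrow H\bbF_2^h .
\]

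\emph{Algebraic step.} Using $\calA^h\cong \calA^{C_2}\otimes_{H\underline{\bbF_2}_{*,*}}H\bbF_2^h{}_{*,*}$ and the splitting of \cref{rhobockstein}(1), I would write $H\bbF_2^h{}_{*,*}$ as an $\calA^{C_2}$-module extension of ${H\bbF_2}^\bbR_{*,*}$ by a desuspension of $NC$. Applying $\Ext_{\calA^{C_2}}(H\underline{\bbF_2}_{*,*},-)$, together with a change-of-rings identification of $\Ext_h$ with this Ext, gives a long exact sequence
\[
\cdots\to \Ext_{NC}^{s,f-1,w}\xrightarrow{\ \delta\ }\Ext_\bbR^{s-1,f+1,w}\to\Ext_h^{s-1,f+1,w}\to\cdots.
\]
The resulting filtration shift of two is exactly the shape of a $d_2$. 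Because of the grading shift, this long exact sequence can be repackaged as the homology of the two-term complex $\bigl(\Ext_\bbR\oplus\Ext_{NC}[f-1],\delta\bigr)$, i.e.\ a two-stage filtration of the Borel $E_1$-page whose associated spectral sequence collapses after one differential. It remains to identify $\delta$.

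\emph{Identifying $\delta$ (main obstacle).} The claim is that $\delta$ is the genuine $d_2$ restricted to $\Ext_{NC}$ and projected to $\Ext_\bbR$. To prove it I would work synthetically, or equivalently with explicit Adams towers. Take the $H\underline{\bbF_2}$-Adams tower of $S^0$ and smash it with $EC_{2+}\to S^0\to\widetilde{E}C_2$. This produces a comparison map of Adams towers whose cofiber is concentrated in the $NC$ summand. By a standard geometric boundary theorem argument (Bruner), the algebraic connecting map is computed by lifting an $NC$-class one step in the genuine tower and then projecting. That is precisely the recipe for the genuine $d_2$, read off modulo the $NC$ part of the target.

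The main difficulty is the indeterminacy coming from the $\Ext_{NC}$-component of $d_2$. I would first check that it contributes nothing to the two-term complex: the projection to $\Ext_\bbR$ is well defined because the $NC$-part of $d_2$ of an $NC$-class lands back in the $NC$ summand, which is the kernel of the comparison. As a consistency check, I would verify the formula on the $0$-line against known low-dimensional genuine $d_2$'s.
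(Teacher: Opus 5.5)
There is a genuine gap, and it is exactly where the content of the proposition lies: the shift $f-1$ on the $NC$ summand. Your coefficient step is incorrect. The map $H\underline{\bbF_2}_{*,*}\to H\bbF_2^h{}_{*,*}=\bbF_2[\rho,\tau^{\pm1}]$ kills $NC$, because $\rho$ is a nonzerodivisor in the target while $NC$ is $\rho$-torsion. Its cokernel is $Q=\bbF_2[\rho]\{\tau^{-1},\tau^{-2},\dots\}$. This module lives in stems $\le 0$ and is $\rho$-torsion-free. By contrast, $NC$ lives in stems $\ge 0$ and is $\rho$-torsion and $\rho$-divisible, and the elements $\rho^{-k}\tau^{-n-1}$ in your matching do not exist for $k>0$.

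So $H\bbF_2^h{}_{*,*}$ is an extension of $Q$ by $\bbF_2[\rho,\tau]$. The resulting long exact sequence has connecting map $\Ext^{s,f,w}(Q)\to\Ext_\bbR^{s-1,f+1,w}$, which raises filtration by one. No internal suspension of a coefficient module changes the homological degree $f$, so ``because of the grading shift'' cannot turn this into a map out of $\Ext_{NC}^{s,f-1,w}$.

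The missing input is a second sequence $0\to Q\to\rho^{-1}Q\to\rho^{-1}Q/Q\to 0$. Here $\rho^{-1}Q=\bbF_2[\rho^{\pm1},\tau^{\pm1}]/\bbF_2[\rho^{\pm1},\tau]$, and $\rho^{-1}Q/Q$ is a one-stem shift of $NC$ via $\rho^{-k-1}\tau^{-n-1}\leftrightarrow\frac{\theta}{\rho^k\tau^n}$. You also need the vanishing of $\Ext$ with coefficients in the $\rho$-periodic comodule $\rho^{-1}Q$. That vanishing is equivalent to $\rho^{-1}\Ext_\bbR\to\rho^{-1}\Ext_h$ being an isomorphism, which is an algebraic Lin/Segal-type theorem. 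Only with it do you get $\Ext^{s,f,w}(Q)\cong\Ext_{NC}^{s,f-1,w}$. The jump of two in $\delta$ is therefore a composite of two connecting maps, not a single one.

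Even granting this, identifying the resulting map with $\mathrm{pr}_\bbR\circ d_2$ is the real topological content, since it asserts that a genuine Adams $d_2$ is computed algebraically. Your sketch does not establish it. The well-definedness of the projection, which you flag as the main difficulty, is automatic.

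The comparison you describe is also inaccurate. Neither $EC_{2+}\wedge H\underline{\bbF_2}$ nor the fiber of $H\underline{\bbF_2}\to H\bbF_2^h$ has homotopy concentrated in $NC$: the fiber has homotopy $\Sigma^{-1}\rho^{-1}Q$, which surjects onto $NC$ with kernel $\Sigma^{-1}Q$. Bruner's geometric boundary theorem compares algebraic and topological connecting maps of a cofiber sequence with short exact homology. It is not a statement about a $d_2$ in one spectral sequence.

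What is needed is a comparison of the $H\underline{\bbF_2}$- and $H\bbF_2^h$-Adams towers along $j$. A genuine filtration-$f$ class $z$ with $j_*z=0$ is pushed to Borel filtration $f+1$. One must show the resulting class corresponds to $z$ under the algebraic isomorphism above. Then the boundary computing the genuine $d_2(z)$ becomes the Borel $d_1$, i.e.\ the algebraic connecting map. This filtration-shift mechanism is the one recorded in \cref{d(NC)}. The paper proves neither statement itself and imports both from \cite{ma2026borel}.
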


In particular, there is a natural map
\[\varphi:\Ext_{h}^{s,f,w}\to\Ext_{NC}^{s,f-1,w}.\]
When $w\ge s+2$, $\varphi$ is an isomorphism since both $\Ext_{\bbR}^{s,f,w}$ and $\Ext_{\bbR}^{s-1,f+1,w}$ are trivial. Furthermore, since $\Ext_{\bbR}^{s,f-1,w}$ is trivial as well, we have
\[\Ext_{h}^{s,f,w}\cong\Ext_{C_2}^{s,f-1,w}.\]

Let $j$ be the Borel completion map $H\underline{\bbF_2}\to H\bbF_2^h$. It induces a map
\[j_*:\leftindex^{C_2}E_1^{s,f,w}\to\leftindex^hE_1^{s,f,w},\]
between the $E_1$-page of the genuine and Borel $C_2$-equivariant Adams spectral sequences, which is trivial when $w\ge s+2$. The differentials in the genuine and Borel Adams spectral sequences have the following correspondence.

\begin{prop}[{\cite[Theorem~6.4, 6.5]{ma2026borel}}]\label{d(NC)}
~\begin{enumerate}
        \item Let $z\in\leftindex^{C_2}{E}_1^{s,f,w}$ be an element with $j_*z=0$. Suppose $z$ supports a nontrivial differential $d_r^{C_2}([z])=[z']$, where $r\ge2$ and $z'\in\leftindex^{C_2}E_1^{s-1,f+r,w}$. Suppose further that $j_*z'=0$. Then there exists an element $x\in\leftindex^hE_1^{s,f+1,w}$ surviving to $\leftindex^hE_2$ with $\varphi([x])=[z]\in\leftindex^{C_2}E_2$, and an element $x'\in\leftindex^hE_1^{s-1,f+r,w}$ surviving to $\leftindex^hE_r$ with $\varphi([x'])=[z']\in\leftindex^{C_2}E_2$, such that $\leftindex^hd_r([x])=[x']$.
        \item Let $z\in\leftindex^{C_2}{E}_1^{s,f,w}$ be an element with $j_*z=0$. Suppose $z$ survives to $\leftindex^{C_2}E_{\infty}$ and detects $\hat{z}\in\pi^{C_2}_{s,w}$. Then there exists an element $x\in\leftindex^hE_1^{s,f+1,w}$ surviving to $\leftindex^hE_{\infty}$ and detecting $\hat{z}$, such that $\varphi([x])=[z]$. \tqed
\end{enumerate}
\end{prop}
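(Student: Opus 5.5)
The statement to prove is \cref{d(NC)}. It is quoted from \cite{ma2026borel}, but I would prove it by comparing Adams towers. The plan rests on the cofiber sequence of $C_2$-spectra
\[
\widetilde{E C_2}\wedge \Sigma^{-1}H\bbF_2^h \to H\underline{\bbF_2} \xrightarrow{\ j\ } H\bbF_2^h,
\]
or equivalently on the fiber $F$ of $j$. On homotopy, $\pi_\star F$ is essentially the negative cone $NC$, shifted so that $NC$ classes in $H\underline{\bbF_2}$ correspond to classes of $H\bbF_2^h$ one Adams filtration higher. This shift is the source of the map $\varphi\colon \Ext_h^{s,f+1,w}\to\Ext_{NC}^{s,f,w}$ in \cref{genuinevsborel}.

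\textbf{Step 1: compare the two resolutions.} I would build the genuine Adams tower $\{Y_f\}$ for $S^0$ from $\overline{H\underline{\bbF_2}}$, and the Borel tower $\{Y^h_f\}$ from $\overline{H\bbF_2^h}$. The map $j$ induces a map of towers $Y_f\to Y^h_f$. The condition $j_*z=0$ says that the composite $Y_f\to Y_f\wedge H\underline{\bbF_2}\to Y_f\wedge H\bbF_2^h$ kills the class $z$ on $E_1$. Hence $z$ lifts through the fiber term, which identifies with a class in $Y_f\wedge\Sigma^{-1}\overline{H\bbF_2^h}\wedge H\bbF_2^h$, that is, with an element of filtration $f+1$ in the Borel $E_1$-page. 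Call this lift $x$. The construction is compatible with the splitting of \cref{genuinevsborel}, so $\varphi([x])=[z]$. The same construction applied to $z'$ gives $x'$.

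\textbf{Step 2: transport the differential.} Suppose $d_r^{C_2}([z])=[z']$. Then $z$ lifts to a map $S^{s,w}\to Y_f/Y_{f+r}$ whose boundary is detected by $z'$ in $Y_{f+r}$. Pushing this picture through the comparison of towers from Step 1, and using $j_*z'=0$ to lift the target as well, produces a lift of $x$ to $Y^h_{f+1}/Y^h_{f+1+r}$ with boundary detected by $x'$. This gives $\leftindex^h d_r([x])=[x']$ at filtration $f+r$. Both shifts by one match, so the length of the differential is preserved.

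\textbf{Step 3: check nontriviality and survival.} To see that $[x']$ is nonzero on $\leftindex^hE_r$, I argue by contradiction. If $[x']$ were hit by a shorter Borel differential, or were zero, then applying $\varphi$ and running the correspondence backwards (the isomorphism $\Ext_h\cong H(\dots,\delta)$ of \cref{genuinevsborel} is exactly the $d_2$-compatibility) would show that $[z']$ vanishes on $\leftindex^{C_2}E_r$, which is a contradiction. Part (2) follows from the same lifting with $r=\infty$: the homotopy class $\hat z$ maps trivially under $j$ at the associated-graded level, so it is detected by the lift $x$ in the Borel spectral sequence.

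I expect Step 2 to be the main obstacle. The map of towers raises filtration by exactly one only on the kernel of $j_*$, so one needs a careful filtration-shifting argument, essentially a geometric-boundary-type theorem, together with control of the indeterminacy coming from the $\Ext_\bbR$ summand. My first attempt would be to phrase the argument synthetically, as a cofiber sequence of synthetic spectra in which the connecting map is multiplication by $\tau$ up to a shift, and then apply the standard geometric boundary theorem.
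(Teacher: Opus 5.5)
The paper gives no argument of its own here; it quotes Ma's Theorems 6.4--6.5. So your proof has to stand alone, and its decisive step is missing.

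\textbf{Smaller issues.} The displayed cofiber sequence is wrong. By the Tate square, the fiber of $j$ is $F(\widetilde{EC_2},H\underline{\bbF_2})\simeq\Sigma^{-1}\Cof(\widetilde{EC_2}\wedge H\underline{\bbF_2}\to\widetilde{EC_2}\wedge H\bbF_2^h)$. Its homotopy is $\Sigma^{-1}\bbF_2[\rho^{\pm1}]\{\tau^{-m}\}_{m\ge1}$, an extension of $NC$ by a shift of $\operatorname{coker} j_\star$; it is not $\Sigma^{-1}$ of the Tate spectrum. Also, lifting $z$ through $Y_f\wedge\Fib(j)$ does not produce an element of $\leftindex^hE_1^{f+1}=\pi_\star(H\bbF_2^h\wedge Y^h_{f+1})$. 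The filtration jump has to happen in the tower, which is what your Step 2 does, and that step is formal rather than the obstacle:
\begin{itemize}
\item Choose $\tilde z\colon S^{s,w}\to Y_f/Y_{f+r}$ whose boundary projects exactly to $z'$, and push it along the map of towers $J\colon Y_\bullet\to Y^h_\bullet$.
\item Since $J\tilde z$ projects to $j_*z=0$, it lifts to $u\colon S^{s,w}\to Y^h_{f+1}/Y^h_{f+r}$.
\item Since $\partial u=J(\partial\tilde z)$ projects to $j_*z'=0$, $u$ lifts further to $Y^h_{f+1}/Y^h_{f+r+1}$, and the boundary of this lift lifts $J(\partial\tilde z)$.
\end{itemize}
This gives $\leftindex^hd_r[x]=[x']$ with no filtration-raising map of towers and no synthetic input. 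Here $x'$ sits in filtration $f+r+1$, not $f+r$. The index in the statement is a typo, and your ``at filtration $f+r$'' contradicts your own ``both shifts by one match''.

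\textbf{The genuine gap.} Nothing in this construction gives $\varphi([x])=[z]$, $\varphi([x'])=[z']$, or even $[x]\neq0$. A map of towers is equally consistent with the images jumping two or more filtrations. In that case $x=x'=0$ and the conclusion is empty.

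Your sentence ``the construction is compatible with the splitting of \cref{genuinevsborel}'' is exactly what has to be proved. You would need to show that the filtration-jump lift $z\mapsto x$ realizes Ma's algebraic isomorphism $\Ext_h^{s,f+1,w}\cong H(\Ext_\bbR^{s,f+1,w}\oplus\Ext_{NC}^{s,f,w},\delta)$. That lift is defined on $d_1$-cycles in $\ker j_*$, modulo $d_1$-boundaries and $j_*$ of higher-filtration classes, which $\varphi$ ignores. You can see $\delta$ appear, since $\leftindex^hd_1(x)$ is represented by $j_*$ of a representative of $d_2z$. But matching the lift with $\varphi$ requires knowing how that isomorphism is built. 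Neither your proposal nor the statement of \cref{genuinevsborel} supplies this.

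Step 3 and part (2) need more as well. You need $[x']\neq0$ in $\leftindex^hE_r$, as the paper uses in \cref{prop:1[s-w]}, and $[x]\neq0$ in $\leftindex^hE_\infty$ for ``detecting''. ``Running the correspondence backwards'' means transferring Borel differentials to genuine ones. For instance, you would have to rule out a shorter Borel differential from an $\Ext_\bbR$-type class onto $[x']$. But $J$ only goes from genuine to Borel, and \cref{genuinevsborel} is only $E_2$-information, so that converse is not available. These identifications are the real content of Ma's theorems, and they are the missing idea.
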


On the other hand, the Borel Adams spectral sequence is related to the classical Adams spectral sequences for the stunted real projective spectra in the sense that the following two Mahowald squares are isomorphic, as shown in \cite[Section~3]{ma2026borel}:

\begin{equation}
    \begin{aligned}
\xymatrix@C=1cm{
 & \Ext_{\calA^h}\left({H\bbF_2}^h_{*,*}/\rho\right)[\rho] \ar@{=>}[rd]|{\textbf{algebraic }\mathbf{\rho-}\textbf{Bockstein SS}} \ar@{=>}[ld]|{\textbf{Borel Adams SS}} & \\
\pi_{*,*}^{C_2}\left(\bbS/\rho\right)^\wedge_2[\rho] \ar@{=>}[rd]|{\textbf{topological }\mathbf{\rho-}\textbf{Bockstein SS}} & & \Ext_{\calA^h}\left({H\bbF_2}^h_{*,*}\right) \ar@{=>}[ld]|{\textbf{Borel Adams SS}}\\
 & \left(\pi_{*,*}^{C_2}\bbS\right)^\wedge_2. & \\
}
\end{aligned}
\end{equation}

\begin{equation}\label{eq:mahowaldsq}
    \begin{aligned}
\xymatrix@!C@=1cm{
 & \bigoplus\Ext(\bbF_2) \ar@{=>}[rd]|{\textbf{algebraic Atiyah-Hirzebruch SS}} \ar@{=>}[ld]|{\textbf{Adams SS}} & \\
\bigoplus\pi_*\mathbb{S}^\wedge_2 \ar@{=>}[rd]|{\textbf{topological Atiyah-Hirzebruch SS}} & & \Ext(\varprojlim\limits_kH_*\bbR P_{-k}^{-w-1}) \ar@{=>}[ld]|{\textbf{Adams SS}}\\
 & \left(\pi_*\bbR P_{-\infty}^{-w-1}\right)^\wedge_2. & \\
}\end{aligned}
\end{equation}

Furthermore, for the lower right spectral sequences in the above squares, we have the following description.

\begin{prop}[{\cite[Theorem~3.3,~4.1,~4.2]{ma2026borel}}]\label{BorelandRPAdams}
The Borel Adams spectral sequence of the $C_2$-equivariant sphere
\[
    \Ext_{h}^{s,f,w}\implies\left(\pi_{s,w}^{C_2}\bbS\right)^\wedge_2
\]
is either isomorphic to the classical Adams spectral sequence of the stunted real projective spectra
\[\Ext^{s-w,f-1}\left(\Sigma\bbR P_{-\infty}^{-w-1}\right)\implies \left(\pi_{s-w}\Sigma\bbR P_{-\infty}^{-w-1}\right)^\wedge_2\]
when $w>0$, or isomorphic to
\[\Ext^{s-w,f-1}\left(\bbR P_{-w}^{\infty}\right)\oplus\Ext^{s-w,f}\implies\left(\pi_{s-w}\bbR P_{-w}^{\infty}\right)^\wedge_2\oplus\pi_{s-w}\bbS^\wedge_2\]
when $w\le0$.
\tqed
\end{prop}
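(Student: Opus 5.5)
The plan is to identify both the target and the $E_2$-page of the Borel Adams spectral sequence with their classical counterparts. I would do this by passing through homotopy fixed points and Thom spectra of multiples of the sign line bundle over $\bbR P^\infty$.

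\emph{Homotopy side.} The Borel completion gives $\pi^{C_2}_{s,w}F(EC_{2+},\bbS)\cong\pi_{s-w}(S^{-w\sigma})^{hC_2}$. I would use the norm cofiber sequence
\[(S^{-w\sigma})_{hC_2}\to(S^{-w\sigma})^{hC_2}\to(S^{-w\sigma})^{tC_2}\]
together with the standard identifications $(S^{-w\sigma})_{hC_2}\simeq\bbR P^{\infty}_{-w}$ (a Thom spectrum of $-w$ times the canonical line bundle) and $(S^{-w\sigma})^{tC_2}\simeq\Sigma\bbR P^{\infty}_{-\infty}$, the latter obtained as the limit of the $\Sigma\bbR P^\infty_{-k}$. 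Since $2$-completion is implicit, Lin's theorem gives $\Sigma\bbR P_{-\infty}^{\infty}\simeq\Sigma\bbR P^{-1}_{-\infty}\vee\ldots$.

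Then I would split into cases.
\begin{itemize}
\item For $w>0$, the filtration by skeleta shows that the norm map identifies $(S^{-w\sigma})^{hC_2}$ with the fiber of $\Sigma\bbR P^{\infty}_{-\infty}\to\Sigma\bbR P^\infty_{-w}$, that is, with $\Sigma\bbR P_{-\infty}^{-w-1}$.
\item For $w\le0$, the Segal-conjecture/Lin splitting of $\bbS^{hC_2}\simeq\bbS\vee\Sigma\bbR P^{-1}_{-\infty}$ propagates along the cells of $S^{|w|\sigma}$. This yields $(S^{-w\sigma})^{hC_2}\simeq\bbR P^\infty_{-w}\vee\bbS$ after $2$-completion.
\end{itemize}

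\emph{Algebraic side.} I would compute ${H\bbF_2}^h_{*,*}$ as $H^{-*}(BC_2)$ twisted by $S^{-w\sigma}$, namely $\bbF_2[\tau^{\pm1}][\rho]$ with the appropriate grading. In a fixed weight $w$ this is, as an $\calA$-comodule, the homology of the pro-spectrum $\{\Sigma\bbR P^{-w-1}_{-k}\}_k$, possibly together with an extra $\bbF_2$ summand when $w\le0$. A change-of-rings argument, using that $\calA^h$ is the Borel extension of $\calA$ by this coefficient ring, should identify $\Ext_{\calA^h}^{s,f,w}$ with $\Ext^{s-w,f-1}$ of $\varprojlim_k H_*\Sigma\bbR P^{-w-1}_{-k}$. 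The shift $f\mapsto f-1$ comes from the suspension together with the convention that the Borel class $\rho$-torsion sits one filtration up (compare \cref{genuinevsborel}, whose $f-1$ shift matches).

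\emph{Comparing towers.} Finally, I would compare the Adams towers themselves. A Borel $H\bbF_2^h$-Adams resolution of $\bbS$, smashed with $S^{-w\sigma}$ and taken through $(-)^{hC_2}$, should give an inverse system of classical Adams resolutions of the $\Sigma\bbR P^{-w-1}_{-k}$, compatible in $k$. I would then show that the limit is an Adams resolution of the limit, which yields an isomorphism of spectral sequences from $E_2$ onward.

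I expect this last step to be the main obstacle. It requires controlling $\varprojlim^1$ terms and convergence for the pro-system, as in Lin's theorem, via Mittag-Leffler conditions on Ext in each fixed tridegree. In the case $w\le0$ it also requires checking that the extra $\bbS$ summand splits off at the level of Adams resolutions, not just on homotopy.
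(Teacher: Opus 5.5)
\textbf{Verdict: there is a genuine gap, in the filtration bookkeeping.}

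The paper gives no argument for this statement; it imports it from Ma. Your overall architecture is the right one, and the $\varprojlim^1$ issue you flag is routine. That architecture is:
\begin{itemize}
\item apply $(S^{-w\sigma}\wedge-)^{hC_2}=\lim_n F(S(n\sigma)_+,S^{-w\sigma}\wedge-)^{C_2}$ to an Adams tower;
\item use Atiyah duality on the free complexes $S(n\sigma)$ to obtain the inverse system of classical Adams towers of $\Sigma\bbR P^{-w-1}_{-k}$;
\item get Mittag-Leffler from finiteness in each degree.
\end{itemize}

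\textbf{The shift for $w>0$.} You assert the filtration shift rather than derive it, and the shift is where the content lies. A levelwise comparison of towers preserves filtration, so it yields $\Ext_h^{s,f,w}\cong\Ext^{s-w,f}(\Sigma\bbR P^{-w-1}_{-\infty})$ with no shift. Suspension does not change Adams filtration. The shift in \cref{genuinevsborel} compares genuine with Borel, not Borel with classical. So your $f\mapsto f-1$ contradicts your own tower step.

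In fact the index $f-1$ on $\Sigma\bbR P^{-w-1}_{-\infty}$ for $w>0$ cannot hold as printed. By the paper's own identification, $\Ext_h^{0,1,2}\cong\Ext_{C_2}^{0,0,2}=\bbF_2\{\theta\}$. But $\Ext^{-2,0}(\Sigma\bbR P^{-3}_{-\infty})=0$, because $Sq^2x^{-5}=\binom{-5}{2}x^{-3}=x^{-3}$ makes the top class decomposable. It is $\Ext^{-2,1}(\Sigma\bbR P^{-3}_{-\infty})\cong\Ext^{-2,0}(\bbR P^\infty_{-2})$ that equals $\bbF_2$.

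The real source of a shift is the boundary map $\bbR P^\infty_{-w}\to\Sigma\bbR P^{-w-1}_{-\infty}$ of the cofiber sequence $\bbR P^\infty_{-\infty}\to\bbR P^\infty_{-w}\to\Sigma\bbR P^{-w-1}_{-\infty}$. This map is zero on (pro-)homology, so it has Adams filtration one and induces the connecting map on Ext. Combine the Ext long exact sequence with Lin's theorem \emph{at the level of Ext}, namely $\Ext(\varprojlim_k H_*\bbR P^\infty_{-k})\cong\Ext(S^{-1})$. This gives $\Ext^{s-w,f}(\Sigma\bbR P^{-w-1}_{-\infty})\cong\Ext^{s-w,f-1}(\bbR P^\infty_{-w})$ for $s-w\le -2$, which is the form the paper actually uses afterwards.

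\textbf{The case $w\le 0$.} The same issue breaks this step. The sphere summand is invisible in homology: there is no extra $\bbF_2$ in the comodule, and it arises only through Lin's Ext-level theorem. For $w\le0$ the map $\Ext(S^{-1})\to\Ext(\bbR P^\infty_{-w})$ vanishes. It is a map of $\Ext(S^0)$-modules, and the image of the generator would lie in stem $-1$, below the bottom cell. So the long exact sequence collapses to
\[
0\to\Ext^{s-w,f-1}(\bbR P^\infty_{-w})\to\Ext^{s-w,f}(\Sigma\bbR P^{-w-1}_{-\infty})\to\Ext^{s-w,f}\to 0.
\]
A filtration-preserving splitting at the level of Adams resolutions, as you propose, would therefore place the $\bbR P^\infty_{-w}$ summand in the wrong filtration. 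What you need instead is:
\begin{itemize}
\item the map of spectral sequences induced by the norm map $\bbR P^\infty_{-w}\simeq(S^{-w\sigma})_{hC_2}\to(S^{-w\sigma})^{hC_2}$, which has filtration one and so raises filtration by one on $E_r$;
\item the filtration-zero maps $\bbS\to(S^{-w\sigma})^{hC_2}\to(S^{-w\sigma})^{tC_2}\simeq\bbS^\wedge_2$.
\end{itemize}
The exact sequence above then shows the combined map is an isomorphism on $E_2$, hence of spectral sequences.

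\textbf{Two smaller slips.} $\bbS^{hC_2}$ is itself $\Sigma\bbR P^{-1}_{-\infty}\simeq(\bbS\vee\bbR P^\infty_0)^\wedge_2$, not $\bbS\vee\Sigma\bbR P^{-1}_{-\infty}$. Lin's theorem says $\Sigma\bbR P^\infty_{-\infty}\simeq\bbS^\wedge_2$.
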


The fiber sequence
\[\bbR P_{-\infty}^\infty\to\bbR P_{-w}^\infty\to\Sigma\bbR P_{-\infty}^{-w-1},\]
and Lin's theorem \cite{lin1980on} imply that we have the 2-adic equivalence $\tau_{\le-2}(\Sigma\bbR P_{-\infty}^{-w-1})\simeq\tau_{\le-2}(\bbR P_{-w}^\infty)$. In particular, when $w\ge\max\{s+2,0\}$, we have
\[\Ext^{s-w,f-1}\left(\Sigma\bbR P_{-\infty}^{-w-1}\right)\cong\Ext^{s-w,f-1}\left(\bbR P_{-w}^\infty\right).\]

\section{The Hurewicz image of \texorpdfstring{$H\underline{\bbF_2}$}{HF2}}\label{sec:hurewiczim}
\subsection{The algebraic Hurewicz image}

We first compute the $0$-line of $\Ext_{{C_2}}$. 
\begin{theorem}\label{algHurewiczim}
Recall that $v_2(\_)$ denotes the \textit{2-adic valuation}. Then
\[\Ext_{{C_2}}^{s, 0, w} =\begin{cases}
        \bbF_2\{\rho^{-s}\} \quad &\mathrm{if}\ s=w\leq 0,\\
        \bbF_2\{\frac{\theta}{\rho^s \tau^{w-s-2} }\} \quad &\mathrm{if}\ 0\leq s < 2^{v_2(w-s-1)}\ \mathrm{and}\ s-w\leq -2, \ \\
        0\quad &\mathrm{otherwise.}
    \end{cases}\]
\end{theorem}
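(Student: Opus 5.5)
The plan is to use the splitting of \cref{rhobockstein}(1), $\Ext_{C_2}^{s,0,w}=\Ext_{\bbR}^{s,0,w}\oplus\Ext_{NC}^{s,0,w}$, and compute the two summands separately.

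\textbf{The $\bbR$-motivic summand.} Here I would invoke the known computation $\Ext_{\bbR}^{*,0,*}=\bbF_2[\rho]$. This follows from the $\rho$-Bockstein spectral sequence $E_1^+=\Ext_\bbC[\rho]$. Its filtration-$0$ part is $\bbF_2[\tau,\rho]$. The differentials $d_{2^k-1}(\tau^{2^k})=\rho^{2^k-1}\tau^{2^{k-1}}h_k$ remove every $\rho^j\tau^n$ with $n>0$. This accounts for the first case, $\rho^{-s}$ in degree $s=w\le 0$. No other degrees contribute, since $|\rho^j|=(-j,-j)$.

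\textbf{The negative-cone summand: $E_1$-page.} I would run the $E_1^-$ part of the $\rho$-Bockstein spectral sequence.

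By \cref{rhobockstein}(3), a $Q$-type generator $\frac{Q}{\rho^a\tau^b}x$ has filtration $f(x)-1$. It lands in filtration $0$ only if $x\in\Ext_\bbC^{*,1,*}$ is $\tau$-torsion. But $\Ext_\bbC^{*,1,*}$ is spanned by the $\tau$-free classes $h_i$, so there are no such generators.

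The $\theta$-type generators in filtration $0$ are therefore exactly the $\frac{\theta}{\rho^a\tau^b}$ with $x=1$. Such a generator has degree $(a,a+b+2)$. Hence in each bidegree $(s,w)$ with $s\ge0$ and $s-w\le -2$ there is exactly one candidate, namely $a=s$ and $b=w-s-2$, and there are none elsewhere.

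Filtration-$0$ classes cannot be Bockstein targets, since Bockstein differentials raise $f$ by one. So the remaining task is to decide which $\frac{\theta}{\rho^a\tau^b}$ are Bockstein cycles.

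\textbf{The negative-cone summand: differentials.} I expect this step to be the main obstacle. The claim to prove is that $\frac{\theta}{\rho^a\tau^b}$ survives if and only if $a<2^{v_2(b+1)}$. Equivalently, writing $b+1=2^k m$ with $m$ odd, the class $\frac{\theta}{\rho^{a}\tau^{b}}$ supports a $d_{2^k}$ hitting the $h_k$-multiple $\frac{\theta}{\rho^{a-2^k}\tau^{b-2^{k-1}}}h_k$ (indices to be checked against degrees) exactly when $a\ge 2^k$.

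I would try two routes.

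\emph{Route 1: linking with the positive cone.} Use the module structure $\frac{\theta}{\rho^a\tau^b}\cdot\tau^{j}=\frac{\theta}{\rho^a\tau^{b-j}}$ and $\rho\cdot\frac{\theta}{\rho^a\tau^b}=\frac{\theta}{\rho^{a-1}\tau^b}$, together with the Leibniz rule, to transport the positive-cone differentials $d_{2^k-1}(\tau^{2^k})$. The $\tau$-divisibility of $\theta$ should produce a shift of the length by one.

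\emph{Route 2: direct primitives (preferred, to pin down the exact criterion).} Identify $\Ext^{0}$ directly as the comodule primitives of $NC$. One uses that $NC$ is (up to shift) the $\bbF_2$-dual of $\bbF_2[\rho,\tau]$. Dualizing the coaction $\tau\mapsto\tau+\rho\tau_0$ gives the coaction on $\frac{\theta}{\rho^a\tau^b}$ as a sum of terms $\frac{\theta}{\rho^{a-j}\tau^{b-j}}\otimes(\cdots)$. The coefficient of the leading monomial in the $j$-th term is $\binom{b+1}{j}$, up to the standard correction terms coming from the relation for $\tau_0^2$. The class is then primitive iff $\binom{b+1}{j}\equiv0\pmod 2$ for all $1\le j\le a$. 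By Lucas' theorem this holds iff $a<2^{v_2(b+1)}$.

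\textbf{Conclusion.} Substituting $a=s$ and $b+1=w-s-1$ gives exactly the second case of the statement, and all other degrees vanish.

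The delicate point is controlling the correction terms in the coaction, so that only the binomial coefficients matter. I would check this first in low degrees against \cref{HF2figure} and the known $\Ext_{C_2}$ charts.
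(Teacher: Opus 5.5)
You have the same reduction as the paper. You split $\Ext_{C_2}=\Ext_\bbR\oplus\Ext_{NC}$, keep only the powers of $\rho$ in the positive cone, and observe that in filtration $0$ the negative-cone $E_1$-page is spanned by the classes $\frac{\theta}{\rho^a\tau^b}$, none of which can be hit. Your exclusion of $Q$-classes is correct.

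\textbf{The gap.} Deciding which $\frac{\theta}{\rho^a\tau^b}$ are cycles is the entire content of the second case, and neither of your routes establishes it. The paper imports the differential $d_{2^n}\bigl(\frac{\theta}{\rho^{2^n}\tau^{2^n-1}}\bigr)=\frac{\theta}{\tau^{2^n+\lceil 2^{n-1}\rceil-1}}h_n$ from \cite[Proposition~4.10]{guillouisaksen2024c2}. It is $\rho$-divisible and $\tau^{2^{n+1}}$-divisible, so every $\frac{\theta}{\rho^a\tau^b}$ with $v_2(b+1)=n$ and $a\ge 2^n$ supports a nonzero $d_{2^n}$. For $a<2^n$, the class is $\rho^{2^n-a}$ times a class surviving to $E_{2^n}$. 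It can only support $d_r$ with $r\le a<2^n$, and these vanish by $\rho$-linearity, so it is permanent.

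\textbf{Route~1.} It rests on a misindexed positive-cone differential. The correct one is $d_{2^k}(\tau^{2^k})=\rho^{2^k}\tau^{2^{k-1}}h_k$; your $\rho^{2^k-1}\tau^{2^{k-1}}h_k$ lies in the wrong tridegree. So there is no ``shift by one'': both cones have differentials of length $2^k$. Route~1 can be completed. For $n\ge 1$, the relation $\tau^{2^n}\cdot\frac{\theta}{\rho^{2^n}\tau^{2^n-1}}=0$ and the Leibniz rule force $\tau^{2^n}d_{2^n}\bigl(\frac{\theta}{\rho^{2^n}\tau^{2^n-1}}\bigr)=\frac{\theta}{\tau^{2^{n-1}-1}}h_n$. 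You would still have to show that the source survives to $E_{2^n}$ and that this product is nonzero there.

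\textbf{Route~2.} This would genuinely differ from the paper, since it computes primitives directly and bypasses the Bockstein spectral sequence. However, the coaction you write down is wrong, and the verification you postpone is the real content.
\begin{itemize}
\item Identify $x=\frac{\theta}{\rho^a\tau^b}$ with $\rho^{-a-1}\tau^{-b-1}$ in a shift of $\bbF_2[\rho^{\pm1},\tau^{\pm1}]$ modulo monomials with a nonnegative exponent, and expand $(\tau+\rho\tau_0)^{-b-1}$. The $j$-th term is then $\binom{b+j}{j}\frac{\theta}{\rho^{a-j}\tau^{b+j}}\tau_0^j$ for $0\le j\le a$.
\item Your $NC$-factor $\frac{\theta}{\rho^{a-j}\tau^{b-j}}$ has the wrong weight, and $\binom{b+1}{j}$ is the wrong coefficient. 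Both families first become odd at $j=2^{v_2(b+1)}$, so your Lucas criterion comes out right, but only by coincidence.
\item The corrected formula settles primitivity when $a<2^{v_2(b+1)}$.
\item For the converse, odd coefficients are not enough. For $j\ge 2$, $\tau_0^j$ must be rewritten using $\tau_0^2=\tau\xi_1+\rho\tau_1+\rho\tau_0\xi_1$. This moves powers of $\tau$ and $\rho$ onto the $NC$-factor, where $\tau\theta=\rho\theta=0$.
\end{itemize}

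\textbf{What is missing in Route~2.} You need a $\rho$-adic leading-term argument. Take $k=v_2(b+1)\ge 1$ and $a\ge 2^k$. Since $\tau_0^{2^k}\equiv\tau^{2^{k-1}}\xi_1^{2^{k-1}}$ modulo $\rho$, the $j=2^k$ term contributes $\xi_1^{2^{k-1}}\otimes\frac{\theta}{\rho^{a-2^k}\tau^{b+2^{k-1}}}$. This is nonzero and cannot cancel, because every other term of $\psi(x)-1\otimes x$ has a strictly smaller power of $\rho$ in the denominator of its $NC$-factor. For $k=0$, the $j=1$ term $\tau_0\otimes\frac{\theta}{\rho^{a-1}\tau^{b+1}}$ plays this role. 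This is exactly the Guillou--Isaksen $d_{2^k}$ with target $\frac{\theta}{\rho^{a-2^k}\tau^{b+2^{k-1}}}h_k$. Without this argument or the citation, the second case of the theorem is not proved.
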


\begin{proof} $\ $
    \begin{enumerate}
        \item If $s-w\geq 0$, for degree reasons, $\Ext_{NC}=0$. In the $\rho$-Bockstein spectral sequence for $\Ext_{\bbR}$, by \cite{duggerisaksen2017low, belmontisaksen2020rmotivic}, there are differentials
        \begin{align*}
            d_1(\tau)&=\rho h_0\\
            d_{2^n}(\tau^{2^n})&=\rho^{2^n}\tau^{2^{n-1}}h_n, \quad n\geq 1
        \end{align*}
        which are $\rho$-linear. Therefore, the only elements of filtration 0 that survive to $\Ext_\bbR$ are the powers of $\rho$.
        
        \item If $s-w<0$, $\Ext_{\bbR}=0$. In the $\rho$-Bockstein spectral sequence for $\Ext_{NC}$, the only elements of filtration 0 are of the form $\frac{\theta}{\rho^a\tau^b}$ for some $a, b\geq 0$ by \cref{rhobockstein}.

        By \cite[Proposition 4.10]{guillouisaksen2024c2}, for each $n\geq 0$, there is a $\rho$-Bockstein differential
        \[d_{2^n}\left(\frac{\theta}{\rho^{2^n}\tau^{2^n-1}}\right)=\frac{\theta}{\tau^{2^n+\lceil2^{n-1}\rceil-1}}\,h_n,\]
        which is both $\rho$-divisible and $\tau^{2^{n+1}}$-divisible. Thus, for each $i, k \geq 0$, we have
        \[d_{2^n}\left(\frac{\theta}{\rho^{2^n+i}\tau^{k\cdot 2^{n+1}+2^n-1}}\right)=\frac{\theta}{\rho^i\tau^{k\cdot 2^{n+1}+2^n+\lceil2^{n-1}\rceil-1}}h_n.\]
        Therefore, the only permanent cycles are $\frac{\theta}{\rho^j\tau^{k\cdot 2^{n+1}+2^n-1}}$ for $0\leq j\leq 2^n-1$. The claim follows by fitting in the correct bidegree.
    \end{enumerate}
\end{proof}

\begin{prop}\label{prop:1[s-w]}
    For $s-w\leq -2$ and $s<2^{v_2(w-s-1)}$, the class $\frac{\theta}{\rho^s\tau^{w-s-2}}\in \Ext_{C_2}^{s, 0, w}$ corresponds to the class $1[s-w]$ in the Adams spectral sequence of $\Ext^{s-w, 0}(\bbR P^\infty_{-w})$. Moreover, $\frac{\theta}{\rho^s\tau^{w-s-2}}$ supports a genuine Adams $d_r$-differential if and only if $1[s-w]$ supports a classical Adams $d_r$-differential.
\end{prop}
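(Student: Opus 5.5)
The plan is to chain the three comparisons from \cref{sec:prelim}: genuine to Borel, then Borel to classical. Fix $(s,w)$ with $s-w\le -2$ and $0\le s<2^{v_2(w-s-1)}$. Then $w\ge s+2\ge 2$, so we are in the range $w\ge \max\{s+2,0\}$ and $w>0$, where all the cited identifications hold.

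\emph{Step 1: identify the class.} Since $w\ge s+2$, the discussion after \cref{genuinevsborel} gives isomorphisms
\[
\Ext_{C_2}^{s,0,w}\cong \Ext_{NC}^{s,0,w}\xleftarrow[\cong]{\ \varphi\ }\Ext_h^{s,1,w}.
\]
By \cref{BorelandRPAdams} and the remark after it (Lin's theorem), we also have
\[
\Ext_h^{s,1,w}\cong \Ext^{s-w,0}(\Sigma\bbR P^{-w-1}_{-\infty})\cong \Ext^{s-w,0}(\bbR P^\infty_{-w}).
\]
By \cref{algHurewiczim} the left-hand group is $\bbF_2\{\frac{\theta}{\rho^s\tau^{w-s-2}}\}$, so the right-hand group is also one-dimensional. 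Its only possible nonzero filtration-$0$ class in stem $s-w$ is the bottom-cell-type class $1[s-w]$, i.e.\ the homology generator of the $(s-w)$-cell, which lies in $\Ext^0$ exactly when it is primitive. As a consistency check, we can verify directly that $x^{s-w}$ is not hit by any $Sq^i$ from cells $\ge -w$: this says $\binom{s-w-i}{i}\equiv 0$ for the relevant $i$, which matches the $2^{v_2(w-s-1)}$ condition. The nonzero element on one side must then map to the nonzero element on the other, which proves the first claim.

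\emph{Step 2: genuine differentials give classical ones.} Suppose $\frac{\theta}{\rho^s\tau^{w-s-2}}$ supports $d_r^{C_2}$ with nonzero target $z'$ in degree $(s-1,r,w)$. Since $w\ge (s-1)+2$, we have $j_*z=j_*z'=0$ for degree reasons. Then \cref{d(NC)}(1) produces a Borel differential $\leftindex^h d_r([x])=[x']\neq 0$ with $\varphi([x])=[z]$. By Step 1, $[x]$ is the unique nonzero class in $\Ext_h^{s,1,w}$, so it corresponds to $1[s-w]$. Transporting along the isomorphism of spectral sequences in \cref{BorelandRPAdams} gives a nontrivial classical $d_r(1[s-w])$.

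\emph{Step 3: the converse.} Suppose $1[s-w]$ supports a classical $d_r$. If the genuine class were a permanent cycle, \cref{d(NC)}(2) would give a Borel class $x$ with $\varphi([x])=[z]$ that survives to $E_\infty$. By uniqueness, $x$ corresponds to $1[s-w]$, which is a contradiction. So the genuine class supports some $d_{r'}$, and Step 2 shows that $1[s-w]$ supports $d_{r'}$. Since a class supports at most one nontrivial differential, $r'=r$.

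I expect the main obstacle to be the bookkeeping in Steps 2 and 3. We need the $E_r$-page identifications to respect $\varphi$, and we need nonvanishing of targets to hold on $E_r$ pages, not only on $E_2$. The uniqueness of the lift $x$ is what makes this work, because the relevant $\Ext^1_h$ group is one-dimensional. The indeterminacy in the target $x'$ is harmless, since only nontriviality is asserted.
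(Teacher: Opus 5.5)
Your proposal is correct and follows the paper's route: you identify the class through the isomorphisms $\Ext_{C_2}^{s,0,w}\cong\Ext_h^{s,1,w}\cong\Ext^{s-w,0}(\bbR P^\infty_{-w})$, where $1[s-w]$ is the only possible filtration-$0$ class in that stem, and then transport differentials using \cref{d(NC)} and \cref{BorelandRPAdams}. Your Steps 2 and 3 spell out what the paper compresses into its last sentence: part (2) of \cref{d(NC)} and the fact that a filtration-$0$ class cannot be a boundary give the converse and match the lengths. One small correction: $1[s-w]$ sits on the $(s-w)$-cell, not the bottom cell.
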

\begin{proof}
    By \cref{d(NC)}, the class $\frac{\theta}{\rho^s\tau^{w-s-2}}$ in $\Ext_{C_2}^{s, 0, w}$ corresponds to some class $x$ in $\Ext_h^{s, 1, w}$. By \cref{BorelandRPAdams}, since $w>0$, $x$ in $\Ext_h^{s, 1, w}$ further corresponds to a class in $\Ext^{s-w, 0}(\bbR P_{-\infty}^{-w-1}) =\Ext^{s-w, 0}(\bbR P^\infty_{-w})$. 

    Consider the algebraic Atiyah --Hirzebruch spectral sequence
    \[E_1^{s, f}=\bigoplus_{i=-w}^\infty \Ext^{s, f}(S^i)\implies \Ext^{s, f}(\bbR P^{\infty}_{-w}).\]
    At stem $s-w$ and filtration $0$, the only generator in the $E_1$-page is of the form $1[s-w]$. This class is the only possibility that corresponds to $x$.

    The correspondence of differentials follows from \cref{d(NC)} and the isomorphism of spectral sequences in \cref{BorelandRPAdams}.
\end{proof}

\begin{rmk}
If $s-w\leq -2$ and $s\geq 2^{v_2(w-s-1)}$, the class $\frac{\theta}{\rho^{s}\tau^{w-s-2}}$ supports a $\rho$-Bockstein differential according to the proof of \cref{algHurewiczim}. Correspondingly, $1[s-w]$ supports a nontrivial algebraic Atiyah--Hirzebruch differential in $\bbR P^\infty_{-w}$.
\end{rmk}

\subsection{The homotopy Hurewicz image}
It remains to determine the fate of the $0$-line elements in the genuine Adams spectral sequence. For simplicity, we denote the image-of-$J$ spectrum by $J$.

Recall that for $n=(2a+1)2^{c+4d}$ with $0\leq c\leq 3$,  $\psi(n)=2^c+8d$. A corollary of Adams' solution to the vector field problem \cite{adams1962vector} is detailed information about the primary attaching maps of stunted projective spectra. 

\begin{prop}\label{prop:RPattachingmap}
~\begin{enumerate}
        \item For $n>0$, the top cell of $\bbR P^{n-1}_{n-\psi(n)}$ splits off, while it does not split in $\bbR P^{n-1}_{n-\psi(n)-1}$. Such an attaching map from the $(n-1)$-cell to $(n-\psi(n)-1)$-cell can be chosen as a generator of $\pi_{\psi(n)-1}J$. 
        \item For $n< 0$, the top cell of $\bbR P^{n-1}_{n-\psi(-n)}$ splits off, while it does not split in $\bbR P^{n-1}_{n-\psi(-n)-1}$. Such an attaching map from the $(n-1)$-cell to $(n-\psi(-n)-1)$-cell can be chosen as a generator of $\pi_{\psi(-n)-1}J$.
    \end{enumerate}
\end{prop}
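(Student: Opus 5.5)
The statement is a reformulation of Adams' solution of the vector field problem in terms of stunted projective spectra, so the plan is to deduce it from Adams' theorem together with James periodicity and Atiyah/S-duality, not to reprove anything from scratch.

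\textbf{Step 1: splitting of the top cell.} For part (1), recall Adams' theorem: $\bbR P^{n-1}_{n-k}$ is coreducible, i.e.\ its top cell splits off stably, if and only if $k\le\psi(n)$. Here $\bbR P^{n-1}/\bbR P^{n-k-1}$ has $k$ cells, and the bound $\psi(n)$ is where the Radon--Hurwitz count enters. This gives at once that the top cell splits in $\bbR P^{n-1}_{n-\psi(n)}$ and not in $\bbR P^{n-1}_{n-\psi(n)-1}$.

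\textbf{Step 2: the attaching map.} Since the top cell splits off the subcomplex, the top-cell attaching map in $\bbR P^{n-1}_{n-\psi(n)-1}$ factors, after choosing the splitting, through the bottom cell. It therefore defines an element $\alpha\in\pi_{\psi(n)-1}S^0$, well defined modulo the indeterminacy coming from the choice of splitting, and $\alpha$ is nonzero modulo that indeterminacy. I would identify $\alpha$ by applying the $J$-homomorphism viewpoint. The stunted projective space $\bbR P^{n-1}_{n-k}$ is the Thom spectrum of $(n-k)L$ over $\bbR P^{k-1}$, so coreducibility is governed by the order of $(n-k)L$, equivalently $nL$, in $\widetilde{J}(\bbR P^{k})$. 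Adams computed this group as cyclic of order $2^{\varphi(k)}$. The obstruction to extending the splitting one more cell is then the image under $J$ of a generator of the relevant $\widetilde{KO}$ of the sphere, pulled back through the top cell. Adams' $e$-invariant computation shows this obstruction has $e$-invariant of maximal order, so the attaching map maps to a generator of $\pi_{\psi(n)-1}J$. Since $\psi(n)-1\equiv 0,1,3,7 \pmod 8$, these stems are exactly where $J$ is nontrivial, which is consistent. One can take the representative to lie in the image of $J$; that is where the phrase ``can be chosen'' comes from.

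\textbf{Step 3: negative $n$.} For part (2), I would use James periodicity: $\bbR P^{b}_{a}\simeq\Sigma^{-N}\bbR P^{b+N}_{a+N}$ for $N$ a sufficiently large multiple of $2^{\varphi(b-a)}$, which holds for the spectra with negative cells as well. Choose $N$ with $n+N>0$ and with $v_2(n+N)=v_2(-n)$; this is possible since $N$ is divisible by a higher power of $2$ than $n$. Then $\psi(n+N)=\psi(-n)$. Applying part (1) to $n+N$ and desuspending gives part (2), because both splitting of the top cell and the identity of the attaching map are preserved by suspension.

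\textbf{Main obstacle.} The hard part is the identification in Step 2 that the attaching map is a $J$-generator, rather than merely an essential map. I would handle it by the $e$-invariant (equivalently, Adams operations on $\widetilde{KO}(\bbR P^{n-1}_{n-\psi(n)-1})$), citing Adams' computations of $J(\bbR P^k)$ and of $\operatorname{im}J$.
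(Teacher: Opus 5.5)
Your proposal follows the paper's proof. Part (1) is taken from the literature: the paper simply cites Bruner--May--McClure--Steinberger, Section V.2, which is where the $J$/$e$-invariant identification you sketch is carried out. Part (2) is exactly the paper's argument: shift by $N=2^M$ with $M>v_2(n)$ so that $\psi(n+N)=\psi(-n)$, then apply James periodicity.

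One small terminological point. In Adams' usage ``coreducible'' means the bottom cell splits off, while the top cell splitting is ``reducibility''. The two are exchanged by Atiyah duality, which is also what turns $(n-k)L$ into $-nL$ in your Step 2. The criterion you actually state and use, that the top cell of $\bbR P^{n-1}_{n-k}$ splits iff $k\le\psi(n)$, is correct.
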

\begin{proof}
~\begin{enumerate}
    \item See \cite[Section~V.2]{brunermaymccluresteinberger1986hinfinity}.  
    \item Note that for $N > v_2(n)$, $\psi(n)=\psi(2^N+n)$ and if $n<0$, then $\psi(-n)=\psi(2^N+n)$. In particular, for $n<0$, by James periodicity,
    \[\bbR P^{n-1}_{n-\psi(-n)}\simeq \Sigma^{-2^N} \bbR P^{2^N+n-1}_{2^N+n-\psi(-n)}\simeq \Sigma^{-2^N} \bbR P^{2^N+n-1}_{2^N+n-\psi(2^N+n)}.\]
    The claim follows from part (1).
\end{enumerate}
\end{proof}

\begin{theorem}[\cref{mainthm}]\label{homotopyHurewiczim}
The $RO(C_2)$-graded Hurewicz image of $H\underline{\bbF_2}$ consists of
\[\im (\iota_{H\underline{\bbF_2}})_{s, w}=\begin{cases}
        \bbF_2\{\rho^{-s}\} \quad &\mathrm{if}\ s=w\leq 0,\\
        \bbF_2\{\frac{\theta}{\rho^s\tau^{w-s-2}}\} \quad &\mathrm{if}\ 0\leq s< \psi(w-s-1) \ \mathrm{and}\ s-w\leq -2,\\
        0\quad &\mathrm{otherwise.} 
        \end{cases}\]
In other words, the class $\frac{\theta}{\rho^s\tau^{w-s-2}}$ supports a nontrivial Adams differential if and only if $\psi(w-s-1) \leq s < 2^{v_2(w-s-1)}$.
\end{theorem}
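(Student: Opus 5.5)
Since $H\underline{\bbF_2}$ is the base of the genuine $C_2$-equivariant Adams spectral sequence, the Hurewicz image in degree $(s,w)$ is exactly the set of filtration-$0$ classes of $\Ext_{C_2}^{s,0,w}$ that are permanent cycles. (A filtration-$0$ class cannot be hit by a differential, and nonzero filtration-$0$ classes are detected on the $0$-line.) By \cref{algHurewiczim} the candidates are $\rho^{-s}$ for $s=w\le 0$ and $\frac{\theta}{\rho^s\tau^{w-s-2}}$ for $0\le s<2^{v_2(w-s-1)}$, $s-w\le -2$. Everything else on the $0$-line vanishes already at $E_2$. The class $\rho^{-s}$ is the image of $a_\sigma^{-s}$, hence a permanent cycle, which gives the first case. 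It therefore remains to show, in the negative-cone range, that $\frac{\theta}{\rho^s\tau^{w-s-2}}$ is a permanent cycle exactly when $s<\psi(w-s-1)$. This is the reformulation in the last sentence of the statement; note that $\psi(n)\le 2^{v_2(n)}$.

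By \cref{prop:1[s-w]}, this is equivalent to deciding when $1[s-w]$ survives in the classical Adams spectral sequence of $\bbR P^\infty_{-w}$. Set $m=s-w\le -2$ and $n=w-s-1=-m-1>0$. Since the class sits in filtration $0$, it is a permanent cycle iff the bottom cell $S^{m}$ of the relevant skeleton admits a map from the sphere of degree $1$ on $H_m$. Equivalently, the cell $m$ must split off $\bbR P^{m}_{-w}$, i.e. from the skeleton spanning cells $-w$ through $m$. For Adams filtration $0$ there is an integral class in $\pi_m$ of that skeleton with mod $2$ Hurewicz image $1[m]$ iff the top cell splits off mod lower cells. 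Because the spectrum is bounded below, permanence is equivalent to this splitting. The number of cells below the top is $m-(-w)=s$.

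Now apply \cref{prop:RPattachingmap}(2) with the index $m+1=-n<0$, so that the top cell is $m=(m+1)-1$. The top cell of $\bbR P^{m}_{m+1-\psi(n)}$ splits off, but it does not split in $\bbR P^{m}_{m-\psi(n)}$. Hence the top cell splits in $\bbR P^m_{-w}=\bbR P^m_{m-s}$ iff $m-s\ge m+1-\psi(n)$, i.e. iff $s<\psi(n)$. Splitting in a smaller skeleton is inherited from a larger one via the inclusion, and non-splitting in $\bbR P^m_{m-\psi(n)}$ persists in larger bottoms because the attaching map already hits the $(m-\psi(n))$-cell essentially after collapsing lower cells. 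This gives exactly the stated range. The cases with $\psi(n)\le s<2^{v_2(n)}$ then support some nontrivial differential, since they are non-permanent classes in $E_2$.

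The main obstacle is making the splitting/permanence translation rigorous in the pro-spectrum or inverse-limit setting $\bbR P^\infty_{-w}\simeq\Sigma\bbR P^{-w-1}_{-\infty}$ used in \cref{BorelandRPAdams}. I would handle it by noting that in the relevant stem only finitely many cells matter, and then passing to the finite skeleton via the Lin equivalence recorded after \cref{BorelandRPAdams}. A second check is that non-splitting in the smaller complex also gives non-splitting for all larger bottom cells. I would argue this by naturality of the collapse map $\bbR P^m_{-w}\to\bbR P^m_{m-\psi(n)}$: a splitting upstairs would map to a splitting downstairs.
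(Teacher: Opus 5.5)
Your proposal is correct and is essentially the paper's proof. You reduce via \cref{prop:1[s-w]} to the class $1[s-w]$ in the Adams spectral sequence of $\bbR P^\infty_{-w}$, and decide permanence by whether the top cell of $\bbR P^{s-w}_{-w}$ splits, using \cref{prop:RPattachingmap}(2) with index $s-w+1<0$. Your direct equivalence ``filtration-$0$ permanent cycle $\Leftrightarrow$ 2-local splitting of the top cell'' replaces the paper's Atiyah--Hirzebruch/Mahowald-square wording of the non-split direction without changing the substance. The remaining slips are cosmetic: it is the top cell, not the bottom cell; the splitting passes to $\bbR P^{s-w}_{-w}$ along the collapse map, not an inclusion; and the Lin-theorem issue you flag is already absorbed into \cref{prop:1[s-w]}.
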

\begin{proof}
~\begin{enumerate}
        \item For $s=w$, the powers of $\rho$ are surviving permanent cycles for degree reasons. In particular, $\rho$ detects $a_\sigma$.
        \item 
        By \cref{prop:1[s-w]}, it is enough to study the behavior of $1[s-w]$ in the Adams spectral sequence of $\bbR P^\infty_{-w}$.

        If $0\leq s < \psi(w-s-1)$, then $-w \geq s-w+1-\psi(w-s-1)$ so that the $(s-w)$-cell splits off in $\bbR P_{-w}^{s-w}$ by \cref{prop:RPattachingmap}. Therefore,
        \[\Ext(\bbR P_{-w}^{s-w}) \cong \Ext(\bbR P_{-w}^{s-w-1}) \oplus \Ext(S^{s-w}),\]
        and $1[s-w]$ is a permanent cycle. By naturality and degree reasons, $1[s-w]$ is a permanent cycle in the Adams spectral sequence of $\bbR P_{-w}^\infty$. By \cref{d(NC)}, $\frac{\theta}{\rho^s \tau^{w-s-2}}$ is a permanent cycle in the genuine $C_2$-equivariant Adams spectral sequence.

        On the other hand, $\psi(w-s-1)\leq s < 2^{v_2(w-s-1)}$ occurs only when $v_2(w-s-1) \geq 4$. By \cref{prop:RPattachingmap}, we have a differential in the Atiyah--Hirzebruch spectral sequence of $\bbR P_{-w}^\infty$,
        \[d_{\psi(w-s-1)}({1[s-w]})=j_{\psi(w-s-1)-1}[s-w-\psi(w-s-1)],\]
        where $j_{\psi(w-s-1)-1}$ is a chosen generator of $\pi_{\psi(w-s-1)-1}J$. Therefore, in the right part of the Mahowald square \cref{eq:mahowaldsq}, $1[s-w]$ must support a differential in either the algebraic Atiyah--Hirzebruch spectral sequence or the Adams spectral sequence of $\bbR P_{-w}^\infty$. Since $s<2^{v_2(w-s-1)}$, $\frac{\theta}{\rho^s \tau^{w-s-2}}$ survives to $E_2$-page of the genuine $C_2$-equivariant Adams spectral sequence; so does $1[s-w]$ to the $E_2$-page of the Adams spectral sequence of $\bbR P_{-w}^\infty$. Therefore, $1[s-w]$ must support an Adams differential in the Adams spectral sequence of $\bbR P_{-w}^\infty$. By \cref{prop:1[s-w]}, $\frac{\theta}{\rho^s \tau^{w-s-2}}$ supports a nontrivial differential in the genuine $C_2$-equivariant Adams spectral sequence.    
    \end{enumerate}
\end{proof}

\begin{rmk}
    In \cite[Lemma~7.4.5]{balderramaculverquigley2025motivic}, Balderrama--Culver--Quigley showed that there exists a surviving permanent cycle in $\Ext_{h}^{s,1,w}$ of the Borel $C_2$-equivariant Adams spectral sequence if and only if the bottom cell of $\bbR P_{w-s-1}^{w-1}$ splits. With the relationship between the genuine and Borel $C_2$-equivariant Adams spectral sequence revealed by \cref{genuinevsborel}--\cref{BorelandRPAdams}, their result would lead to an alternative proof of \cref{homotopyHurewiczim}.
\end{rmk}

\begin{proof}[Proof of \cref{thmA}]
    By \cite{adams1962vector}, $S^n$ admits $k$ linearly independent vector fields if and only if 
    \begin{equation}\label{eq:proofthmA}
        k\leq \psi(n+1)-1.
    \end{equation}
    Let $w-s-2=n$. In particular, $s-w\leq -2$. Then \cref{homotopyHurewiczim} implies \cref{eq:proofthmA} holds if and only if \[\frac{\theta}{\rho^k\tau^n}\in \im (\iota_{H\underline{\bbF_2}})_{k, k+n+2}.\]
\end{proof}

\section{Some genuine \texorpdfstring{$C_2$}{C2}-equivariant Adams differentials}\label{sec:genc2diff}
In this section, we first give concrete examples of Adams $d_2$-differentials supported by $0$-line elements in the genuine $C_2$-equivariant Adams spectral sequence. We then prove \cref{arbitrarylongdiff}, which identifies the differential supported by the last non-permanent cycle of a fixed coweight. Together with \cref{ex:d2} and \cref{ex:d3}, this implies that there are Adams differentials of arbitrary length on the $0$-line.

\begin{ex}\label{ex:d2}
    The first nontrivial Adams differentials on the $0$-line occur when $w-s=17$. In this case, there are $d_2$-differentials 
\[d_2(1[-17])=h_1h_3[-26]\]
in the Adams spectral sequence of $\bbR P_{-w}^\infty$ for $26\leq w\leq 32$ by Lin's data \cite{linwangxu2025machineproofs}. By \cref{prop:1[s-w]}, these correspond to genuine $C_2$-equivariant Adams $d_2$-differentials on $\frac{\theta}{\rho^j\tau^{15}}$ for $9\leq j\leq 15$. Note that since $\frac{\theta}{\rho^{15}\tau^{15}} \in \langle \rho, \frac{\theta}{\tau^{23}}, h_4 \rangle$, 
\begin{align*}
    d_2\left(\frac{\theta}{\rho^{15}\tau^{15}}\right) &= d_2\left(\langle \rho, \frac{\theta}{\tau^{23}}, h_4 \rangle\right)\\
    &=\langle \rho, \frac{\theta}{\tau^{23}}, d_2(h_4) \rangle +\Indet\\
    &=\langle \rho, \frac{\theta}{\tau^{23}}, h_0h_3^2 \rangle +\Indet\\
    &=\frac{\theta}{\tau^{22}}h_3^2 + \Indet.
\end{align*}
The indeterminacy consists of $\{0, \frac{\theta}{\rho^6\tau^{19}}h_1h_3\}$, and the differential in the Adams spectral sequence of $\bbR P_{-w}^\infty$ implies that $\frac{\theta}{\rho^6\tau^{19}}h_1h_3$ must be a summand. Thus,
\[d_2\left(\frac{\theta}{\rho^{15}\tau^{15}}\right)=\frac{\theta}{\tau^{22}}h_3^2+\frac{\theta}{\rho^6\tau^{19}}h_1h_3. \]
Multiplying by $\rho$, we obtain
\[d_2\left(\frac{\theta}{\rho^{9+j}\tau^{15}}\right) = \frac{\theta}{\rho^j\tau^{19}}h_1h_3, \quad 0\leq j\leq 5. \]

Consequently, when $v_2(w-s-1)=4$, the elements $\frac{\theta}{\rho^{9+j}\tau^{w-s-2}}$ all support $d_2$-differentials for $0\leq j\leq 6$.
\tqed 
\end{ex}

In general, for fixed $s-w$ with $v_2(w-s-1)\ge4$, the differentials supported by $\frac{\theta}{\rho^s\tau^{w-s-2}}$ for $s=2^{v_2(w-s-1)}-1$ are always $d_2$-differentials, by a similar argument of the higher Leibniz rule as above. However, as $s$ decreases, $\frac{\theta}{\rho^s\tau^{w-s-2}}$ tends to support longer differentials. Although we do not know each differential supported by those $0$-line elements explicitly, we are able to show the following patterns of the longest differentials.

\begin{theorem}[\cref{arbitrarylongdiff}]\label{longestdiff}
    If $v_2(w-s-1)\geq 5$, $\frac{\theta}{\rho^{\psi(w-s-1)}\tau^{w-s-2}}$ survives to the $E_r$-page for $r=v_2(w-s-1)-1$, and it supports a nontrivial $d_r$-differential.
\end{theorem}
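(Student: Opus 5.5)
By \cref{prop:1[s-w]}, the statement reduces to a classical one. Set $n=w-s-1$, so that $v_2(n)=v\geq 5$. Write $m=s-w=-n-1$, and fix $s=\psi(n)$. It then suffices to show that in the classical Adams spectral sequence of $\bbR P^\infty_{-w}$, with $-w=m-\psi(n)$, the class $1[m]$ survives to $E_{v-1}$ and supports a nontrivial $d_{v-1}$.

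By James periodicity (as in the proof of \cref{prop:RPattachingmap}) and cellular naturality, I would reduce this to the two-cell-relevant part of the projective spectrum. The bottom cell $-w=m-\psi(n)$ is exactly one below the range where the top cell splits. So in $\bbR P^{m}_{m-\psi(n)}$ the top cell is attached to the bottom cell by a generator $j$ of $\pi_{\psi(n)-1}J$, where $\psi(n)-1=8d+c'$ has the form $8k+7$ once $v\geq 4$ (since $c=0$ in $n=(2a+1)2^{4d}$, $\psi(n)=8d+1$, and $\psi(n)-1=8d$).

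I have to be careful with the indexing here: $\psi(n)-1=8d$. Then $\pi_{8d}J$ is $\bbZ/2$, generated by $P^{d-1}c_0$ or $P^{d-1}h_1h_3$-type classes, not an $8k+7$ class. Fortunately, the relevant mechanism is insensitive to this detail. Attaching by an element $j$ of Adams filtration $f_j$ means that $1[m]$ supports a differential hitting the class detecting $j$ on the bottom cell, of length equal to $f_j$ minus the algebraic AHSS contribution. This is the length the Mahowald-square argument in the proof of \cref{homotopyHurewiczim} forces. For $n$ with $v$ fixed, $d=\lfloor v/4\rfloor$, so the length of this differential must be computed, and it need not simply equal $f_j$. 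The class $1[m]$ first acquires its algebraic AHSS $d$ at cell $m-2^{v}$ (from \cref{algHurewiczim}), far below. The topological attaching map to cell $m-\psi(n)-1$ must therefore be realized in the Adams spectral sequence through the intermediate cells $m-\psi(n)-1,\dots$, and the Adams differential lands on a class of the form $x[m-\psi(n)-1-t]$ whose filtration gap is $v-1$.

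Concretely, I would proceed in three steps.
\begin{enumerate}
\item Identify the target. Compute $\Ext$ of $\bbR P^{m-1}_{m-\psi(n)}$ near stem $m-1$ using the algebraic AHSS. Find the unique candidate in filtration $v-1$ that detects the image of the attaching map, expected to be built from the $h_0$-tower $a'_{8k+7}$ of \cref{imjdiffintro} on a suitable cell. There the equation $d_{r_k}(a'_{8k+7})=h_2P^kh_2$ with $r_k=v_2(k+1)+2$ supplies the exact $2$-adic dependence producing length $v-1$.
\item Exclude shorter differentials. Use sparseness of $\Ext$ in that stem, together with the splitting of the top cell of $\bbR P^m_{m-\psi(n)+1}$ (\cref{prop:RPattachingmap}) and naturality of the inclusion $\bbR P^\infty_{m-\psi(n)+1}\to\bbR P^\infty_{m-\psi(n)}$ on the cofiber sequence with the bottom cell. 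Together these force any differential on $1[m]$ to land in the image of the bottom cell $S^{m-\psi(n)}$, i.e.\ in $\Ext(S^0)$ shifted. This reduces the question to which element of $\pi_*S^0$ is hit, and with what filtration jump.
\item Establish the nontriviality and the exact length of $d_{v-1}$. This is the main obstacle. I would use the Generalized Leibniz Rule and Generalized Mahowald Trick of \cite{linwangxu2025lastkervaire} in synthetic spectra: lift the cofiber sequence $S^{m-\psi(n)}\to \bbR P^m_{m-\psi(n)}\to \bbR P^m_{m-\psi(n)+1}$ synthetically, and transport the differential of \cref{imjdiffintro} (with $k$ chosen so that $v_2(k+1)+2$ matches $v-1$) across the boundary map.
\end{enumerate}

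This gives $d_{v-1}(1[m])\neq 0$, and \cref{prop:1[s-w]} translates it back to the genuine $C_2$ statement.
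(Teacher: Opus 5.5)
Your reduction via \cref{prop:1[s-w]} is fine, and so is identifying the attaching map of the top cell of $\bbR P^b_{b-c}$ with a generator $j_{c-1}$ of $\pi_{c-1}J$. (Here $b=s-w$ is your $m$, $c=\psi(w-s-1)$ and $v=v_2(w-s-1)$.) But you misidentify the mechanism that produces the length $v-1$, so the plan has a genuine gap.

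The stem $c-1$ is $8k$, $8k+1$, $8k+3$ or $8k+7$ according as $v=4k,4k+1,4k+2,4k+3$. It is not always $8d$, nor always $8k+7$, and the argument is not insensitive to this. The length comes from the Adams filtration $f_j$ of the class $a_{c-1}$ detecting $j_{c-1}$. The two-cell subcomplex $\Sigma^{b-c}\Cof(j_{c-1})\hookrightarrow\bbR P^b_{b-c}$ carries $d_{f_j}(1[b])=a_{c-1}[b-c]$. Moreover $f_j=v-1$ exactly when $a_{c-1}=P^{k-1}c_0$, $h_1P^{k-1}c_0$ or $P^kh_2$, i.e.\ for $v\equiv 0,1,2\pmod 4$ with $v\neq 5$.

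Nothing is ``subtracted'' and nothing passes through lower cells. The cells $m-\psi(n)-1-t$ do not exist in $\bbR P^\infty_{m-\psi(n)}$, and the algebraic AHSS differential on $1[b]$ at distance $2^v$ is irrelevant since $\psi(n)<2^v$.

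Pushing forward to $\bbR P^\infty_{b-c}$, naturality plus non-permanence of $1[b]$ only gives length $\geq f_j$. Pinning the length down needs concrete input that ``sparseness'' does not provide:
\begin{itemize}
\item for $v=4k$, the vanishing $\Ext^{b-1,>4k-1}(\bbR P^\infty_{b-c})=0$, since the $h_0$-tower class on the $(b-c+1)$-cell is killed via $Sq^1$;
\item for $v=4k+1$ and $v=4k+2$, Lambda-complex computations showing that $h_1P^{k-1}c_0[b-c]$, resp.\ $P^kh_2[b-c]$, survives the algebraic AHSS. The dangerous sources are $P^{k-1}h_1^3[b-c+7]$, resp.\ the $h_0$-tower class on the $(b-c+5)$-cell.
\end{itemize}
The case $v=5$ is also exceptional. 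There $a_9=h_1^2h_3$ has filtration $3$, but the Atiyah--Hirzebruch differential $d_4(\nu^2[b-6])=(\eta^2\sigma+\eta\epsilon)[b-10]$ shows the attaching map into $\bbR P^{b-1}_{b-10}$ has filtration at least $4$. The answer is $d_4(1[b])=h_1c_0[b-10]$.

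\cref{imjdiffintro} is needed only for $v=4k+3$, and in a different role than the one you assign it. There $k$ is fixed by $v$, so it cannot be ``chosen'' to make $v_2(k+1)+2=v-1$; indeed $v_2(k+1)+2<4k+2$ always. Instead $f_j=4k+1-v_2(k+1)<v-1$, so the two-cell differential is too short, and $r_k$ makes up exactly the deficit: $f_j+r_k-1=4k+2$.

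The actual target is $h_2P^kh_2[b-c+1]$ on the \emph{second} cell, which is attached to the bottom cell by $2$ since $b-c$ is odd. So your Step~2 claim that every differential on $1[b]$ lands in the image of the bottom cell fails here. Also, the cofiber sequence $S^{b-c}\to\bbR P^b_{b-c}\to\bbR P^b_{b-c+1}$ you propose to lift contains no map realizing $a'_{c-1}\cdot h_0=a_{c-1}$. That relation is what lets \cref{imjdiffintro} interact with the attaching map.

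The paper proceeds as follows:
\begin{itemize}
\item It uses the three-cell complex $T=\Cof\bigl(S^{b-1}\xrightarrow{j_{c-1}}S^{b-c}\hookrightarrow\Sigma^{b-c}\Cof(2)\bigr)$, which includes into $\bbR P^b_{b-c}$.
\item It uses the cofiber sequence $S^{b-c}\xrightarrow{\partial}\Sigma^{b-c}\Cof(j_{c-1})\xrightarrow{i}T\to S^{b-c+1}$, where $\partial_*(a'_{c-1})=a_{c-1}[b-c]$.
\item The Generalized Mahowald Trick converts $d_{r_k}(a'_{c-1})=h_2P^kh_2$ into an $i$-extension of length $r_k-1$ from $a_{c-1}[b-c]$ to $h_2P^kh_2[b-c+1]$.
\item After this extension is stretched to $E_\infty$, the Generalized Leibniz Rule combines it with $d_{f_j}(1[b])=a_{c-1}[b-c]$ to give $d_{4k+2}(1[b])=h_2P^kh_2[b-c+1]$ in $T$.
\item Finally, $\Ext^{b-1,>4k+2}(\bbR P^\infty_{b-c})=0$ fixes the length in $\bbR P^\infty_{b-c}$.
\end{itemize}
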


\begin{proof}
    To simplify the notation, let $b=s-w$ and $c=\psi(w-s-1)$. 
    
    By \cref{prop:1[s-w]}, the differential supported by $\frac{\theta}{\rho^{\psi(w-s-1)}\tau^{w-s-2}}=\frac{\theta}{\rho^c\tau^{-b-2}}$ in the genuine $C_2$-equivariant Adams spectral sequence corresponds to the differential supported by $1[b]$ in the Adams spectral sequence of $\bbR P^\infty_{b-c}$. By \cref{prop:RPattachingmap}, the top cell of $\bbR P^{b}_{b-c+1}$ splits off, while it does not split off in $\bbR P^{b}_{b-c}$, and there is a subcomplex inclusion
    \[\Sigma^{b-c}\Cof(j_{c-1}) \hookrightarrow \bbR P^{b}_{b-c}, \]
    where $\Cof(\alpha)$ denotes the cofiber of the map $S^{|\alpha|} \overset{\alpha}{\to} S^0$ and $j_{c-1}$ is a generator of $\pi_{c-1}J$. Let $a_{c-1}\in \Ext^{c-1, m}(S^0)$ denote the element that detects $j_{c-1}$ in the Adams spectral sequence. Here, $m$ is the Adams filtration of $a_{c-1}$. 
    
    In particular, there is an Adams differential
    \[d_m(1[b])=a_{c-1}[b-c]\]
    in the Adams spectral sequence of $\Sigma^{b-c}\Cof(j_{c-1})$. The claim follows by pushing forward this differential along the inclusions
    \[\Sigma^{b-c}\Cof(j_{c-1}) \hookrightarrow \bbR P^b_{b-c} \hookrightarrow \bbR P^\infty_{b-c},\]
    as verified in \cref{lem:v2=4k}--\cref{lem:v2=4k+3} below.
\end{proof}

The last step in the proof of \cref{longestdiff} is decomposed into four lemmas depending on the value of $v_2(w-s-1) \pmod 4$, plus a special case when $v_2(w-s-1)=5$.

For reference, the following table records the values of $c$, $m$, $j_{c-1}$, and $a_{c-1}$, together with the lemma treating each case. Note that $m=v_2(w-s-1)-1$ if $v_2(w-s-1)\equiv 0, 1,$ or $2 \pmod 4$. Indeed, we show that in these cases the Adams differential on $1[b]$ in $\Sigma^{b-c}\Cof(j_{c-1})$ pushes forward to a differential of the same length in $\bbR P^\infty_{b-c}$. On the other hand, the cases $v_2(w-s-1)=5$ and $v_2(w-s-1)\equiv 3\pmod 4$ require more delicate arguments.

\begin{table}[H]
    \centering
    \begin{tabular}{@{}c|c|c|c|c|c@{}}
\hline 
$v_2(w-s-1)$ & 5 &  $4k$ & $4k+1$ & $4k+2$ & $4k+3$                                                   \\\hline 
$c$ & 10 & $8k+1$ & $8k+2$ & $8k+4$ & $8k+8$ 
\\\hline 
$j_{c-1}$ & $\eta^2\sigma$ & $\{P^{k-1}c_0\}$ & $\eta\cdot \{P^{k-1}c_0\}$ & $\{P^kh_2\}$ & gen of $\pi_{8k+7}J$ 
\\\hline 
$a_{c-1}$ & $h_1^2h_3$ & $P^{k-1}c_0$ & $h_1P^{k-1}c_0$ & $P^kh_2$ & $a_{c-1}$ 
\\\hline 
$m$ & $3$ & $4k-1$ & $4k$ & $4k+1$ & $4k+1-v_2(k+1)$ 
\\\hline 
Proof & \cref{lem:v2=5} & \cref{lem:v2=4k} & \cref{lem:v2=4k+1} & \cref{lem:v2=4k+2} & \cref{lem:v2=4k+3} 
\\\hline 
\end{tabular}
    \label{table:w-s-1}
    \caption{Indices for \cref{longestdiff}}
\end{table}

\begin{lemma}\label{lem:v2=4k}
    If $v_2(w-s-1)=4k$ for $k\geq 2$, there is a differential
    \[d_{4k-1}(1[b])=P^{k-1}c_0[b-c]\]
    in the Adams spectral sequence of $\bbR P^\infty_{b-c}$.
\end{lemma}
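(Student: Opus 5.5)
The plan is to push forward the differential $d_{4k-1}(1[b])=P^{k-1}c_0[b-c]$ from the Adams spectral sequence of $\Sigma^{b-c}\Cof(j_{c-1})$, where $c=8k+1$, $j_{c-1}=\{P^{k-1}c_0\}\in\pi_{8k}J$ and $m=4k-1$. We push it along
\[\Sigma^{b-c}\Cof(j_{c-1})\hookrightarrow\bbR P^b_{b-c}\hookrightarrow\bbR P^\infty_{b-c}.\]
In the two-cell complex the differential is standard. The bottom cell class $P^{k-1}c_0[b-c]$ must be killed because $j_{c-1}$ becomes null on the bottom cell. The only class of the right stem and of filtration below $4k-1$ that can kill it is $1[b]$, and $1[b]$ survives to $E_{4k-1}$ because $P^{k-1}c_0$ has Adams filtration exactly $4k-1$ and is not hit earlier in $\Ext(S^0)$.

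By naturality, $1[b]$ survives to $E_{4k-1}$ in $\bbR P^\infty_{b-c}$ and $d_{4k-1}(1[b])$ equals the image of $P^{k-1}c_0[b-c]$, modulo the usual care about lower differentials on the source. The substance of the lemma is therefore two claims.

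\textbf{Claim (a): $1[b]$ survives to $E_{4k-1}$ in $\bbR P^\infty_{b-c}$.} Nothing in the target region of $\Ext(\bbR P^\infty_{b-c})$ can be hit by $1[b]$ via a shorter differential. I would derive this from naturality. A shorter differential $d_r(1[b])$ with $r<4k-1$ would have to restrict compatibly from the two-cell subcomplex, or else be detected on intermediate cells. I would rule out the intermediate cells using the algebraic Atiyah--Hirzebruch filtration. Since $v_2(b)=v_2(w-s-1)=4k$ and the cells $b-c+1,\dots,b-1$ split off the top cell (\cref{prop:RPattachingmap}), any differential on $1[b]$ must land in Atiyah--Hirzebruch filtration $b-c$ or lower. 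The only candidates there have stem-$(c-1)$ coefficients of filtration at least that of $P^{k-1}c_0$.

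\textbf{Claim (b): the image of $P^{k-1}c_0[b-c]$ is nonzero in $E_{4k-1}$ of $\bbR P^\infty_{b-c}$.} This means it is neither zero in $\Ext(\bbR P^\infty_{b-c})$ nor hit by an earlier differential. I would check it in the algebraic Atiyah--Hirzebruch spectral sequence for $\bbR P^\infty_{b-c}$, where $P^{k-1}c_0[b-c]$ sits on the bottom cell. It can only be killed by algebraic Atiyah--Hirzebruch differentials from cells $b-c+1,\dots$ carrying classes in stem $8k-1-i$. The relevant attaching maps near the bottom, with $b-c\equiv -1-8k \pmod{2^{4k}}$, are controlled by James periodicity: they are the attaching maps of $\bbR P_{-8k-1}$, i.e. $h_0$ on an odd cell, then $h_1$ and so on. I would argue that $P^{k-1}c_0$ is not $h_0$-, $h_1$- or $h_2$-divisible in the appropriate way (for instance $P^{k-1}c_0$ is not $h_1$-divisible, and its $h_0$-multiple vanishes), so it survives to $\Ext(\bbR P^\infty_{b-c})$. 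Then I would exclude earlier Adams differentials hitting it by filtration and stem counting in low filtrations, comparing with $\Ext(S^0)$ in the $v_1$-periodic range, where the sources would be image-of-$J$ classes whose differentials are known.

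I expect claim (b) to be the main obstacle, especially excluding hits from the $h_1$-tower and the $P^{k-1}h_1$-type classes on nearby cells. I would first try a comparison map $\bbR P^\infty_{b-c}\to J$-type targets, or the transfer to $S^{-1}$. Failing that, I would use a Mahowald-invariant style argument. Because $P^{k-1}c_0$ detects an image-of-$J$ element that survives under the $J$-homology Hurewicz map of $\bbR P^\infty_{b-c}$, its image cannot be a boundary, which forces the differential to be nontrivial of length exactly $4k-1$.
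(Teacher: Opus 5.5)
Pushing forward $d_{4k-1}(1[b])=P^{k-1}c_0[b-c]$ from $\Sigma^{b-c}\Cof(j_{c-1})$ is the same starting point as the paper. Your Claim (a) is immediate from naturality, so the Atiyah--Hirzebruch discussion there is unnecessary. It also has index slips: $v_2(b+1)=4k$, not $v_2(b)$, since $b$ is odd; and $b-c\equiv -8k-2 \pmod{2^{4k}}$.

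The genuine gap is Claim (b). Surviving to $\Ext(\bbR P^\infty_{b-c})$ is not enough to show the image of $P^{k-1}c_0[b-c]$ is nonzero in $E_{4k-1}$. You must also exclude every Adams $d_r$ with $2\le r\le 4k-2$ out of stem $b$ in filtrations $1,\dots,4k-3$. For general $k$ that region receives classes from all cells $b-c+1,\dots,b$ with coefficients in stems up to $8k+1$, including many non-$v_1$-periodic ones. Nothing in your sketch controls their differentials in $\bbR P^\infty_{b-c}$, and the assertion that the sources "would be image-of-$J$ classes" is unjustified.

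Your fallback is wrong in principle. The composite $S^{b-1}\xrightarrow{j_{c-1}}S^{b-c}\hookrightarrow\bbR P^\infty_{b-c}$ is null, because the $b$-cell is attached by it. So no Hurewicz or detection argument can show that $P^{k-1}c_0[b-c]$ is not a boundary: it is one. The content of the lemma is that it first becomes a boundary at $E_{4k}$.

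The missing idea is to invert the logic, so that (b) comes for free.

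\begin{enumerate}
\item As in the proof of \cref{homotopyHurewiczim}, $1[b]$ survives the algebraic Atiyah--Hirzebruch spectral sequence, since $c=8k+1<2^{4k}$. It is not a permanent cycle, because the $b$-cell does not split off in $\bbR P^b_{b-c}$. Hence it supports some nonzero Adams differential, of length at least $4k-1$ by naturality.
\item It therefore suffices to show $\Ext^{b-1,f}(\bbR P^\infty_{b-c})=0$ for $f>4k-1$. This is a short algebraic Atiyah--Hirzebruch check. The only high-filtration $E_1$-classes in that stem are $h_0$-tower classes: the stem-$0$ tower on the $(b-1)$-cell, and the top (order-two) element of the image-of-$J$ tower in stem $8k-1$ on the $(b-c+1)$-cell. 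Both are removed by $d_1$'s coming from nontrivial $Sq^1$'s, on $H^{b-2}$ and $H^{b-c+1}$ respectively, since $b-2$ and $b-c+1$ are odd.
\item The differential on $1[b]$ then has length exactly $4k-1$. Naturality identifies its value with the image of $P^{k-1}c_0[b-c]$, whose nonvanishing in $E_{4k-1}$ is thereby forced rather than proved directly.
\end{enumerate}
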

\begin{proof}
    If $v_2(w-s-1)=4k$, $j_{c-1}$ is detected by $a_{c-1}=P^{k-1}c_0$. In the Adams spectral sequence of $\Sigma^{b-c}\Cof(j_{c-1})$, there is a differential 
    \[d_{4k-1}(1[b])=a_{c-1}[b-c]=P^{k-1}c_0[b-c].\]
    Pushing forward this differential along the inclusion 
    \[\Sigma^{b-c}\Cof(j_{c-1}) \hookrightarrow \bbR P^b_{b-c}\hookrightarrow \bbR P^{\infty}_{b-c},\] 
    we obtain that $1[b]$ must support a $d_{r_1}$-differential for $r_1\geq 4k-1$. We claim that \[\Ext^{b-1, f}(\bbR P)=0, \quad f>4k-1;\] 
    hence, $r_1=4k-1$.

    Consider the algebraic Atiyah--Hirzebruch spectral sequence
    \begin{equation}\label{eq:algAHSS}
        \bigoplus_{i=b-c}^\infty \Ext(S^i)\implies \Ext(\bbR P^{\infty}_{b-c}).
    \end{equation}
    Since $v_2(w-s-1) = 4k$, the only possible element in stem $b-1$ and filtration higher than $4k-1$ is the order 2 element of the $h_0$-tower of the image-of-$J$, coming from the $(b-c+1)$-cell. Since $b=s-w$ is odd and $c=8k+1$ is odd, we have $b-c+1$ is odd. Then, the nontrivial $Sq^1$-action on $H^{b-c+1}(\bbR P^\infty_{b-c})$ implies that the order 2 element of $(b-c+1)$-cell is killed by a $d_1$-differential due to $h_0$-multiplication. Therefore, $\Ext^{b-1, >r}(\bbR P^{\infty}_{b-c})$ vanishes.
\end{proof}

\begin{rmk}
    In \cref{lem:v2=4k}, we identified the explicit differential
    \[d_{4k-1}(1[b])=P^{k-1}c_0[b-c]\]
    in the Adams spectral sequence of $\bbR P^\infty_{b-c}$. The corresponding differential in the genuine $C_2$-equivariant Adams spectral sequence will have the form 
    \[d_{4k-1}\left(\frac{\theta}{\rho^{\psi(w-s-1)}\tau^{w-s-2}}\right)=\frac{\theta}{\tau^{w-4k-3}}P^{k-1}c_0+\Indet.\]
    The close similarity between the names of the elements in these two spectral sequences, as well as the appearance of indeterminacy, is discussed in \cite[Section~7]{ma2026borel}. The same remarks apply to the differentials in the other lemmas of this section as well.
\end{rmk}

\begin{lemma}\label{lem:v2=4k+1}
    If $v_2(w-s-1)=4k+1$ for $k\geq 2$, there is a differential
    \[d_{4k}(1[b])=h_1P^{k-1}c_0[b-c]\]
    in the Adams spectral sequence of $\bbR P^\infty_{b-c}$.
\end{lemma}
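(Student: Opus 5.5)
The plan mirrors the proof of \cref{lem:v2=4k}. First, when $v_2(w-s-1)=4k+1$ we have $c=\psi(w-s-1)=8k+2$, and $j_{c-1}=\eta\cdot\{P^{k-1}c_0\}$ is a generator of $\pi_{8k+1}J$. It is detected by $a_{c-1}=h_1P^{k-1}c_0$ in Adams filtration $4k$. In the Adams spectral sequence of the two-cell complex $\Sigma^{b-c}\Cof(j_{c-1})$, the top class therefore supports
\[d_{4k}(1[b])=h_1P^{k-1}c_0[b-c].\]
We push this forward along the inclusions $\Sigma^{b-c}\Cof(j_{c-1})\hookrightarrow \bbR P^b_{b-c}\hookrightarrow\bbR P^\infty_{b-c}$.

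Naturality gives a lower bound: $1[b]$ survives to at least $E_{4k}$ in the Adams spectral sequence of $\bbR P^\infty_{b-c}$. By \cref{homotopyHurewiczim} it is not a permanent cycle, so it supports some $d_{r_1}$ with $r_1\geq 4k$. To force $r_1=4k$, it suffices to show that $\Ext^{b-1,f}(\bbR P^\infty_{b-c})=0$ for $f>4k$.

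We check this vanishing with the algebraic Atiyah--Hirzebruch spectral sequence \eqref{eq:algAHSS}. In stem $b-1$, the cell in dimension $b-c+i$ contributes $\Ext^{c-1-i,*}(S^0)$ for $0\le i\le c-1$. We need the classes of filtration $>4k$ in stems $c-1-i=8k+1-i$. By the vanishing line of $\Ext(S^0)$ (roughly slope $1/2$, with the $v_1$-periodic band near it), such classes occur only in a few places:
\begin{itemize}
\item the $h_0$-towers in stems $\equiv 3\pmod 8$ below $8k+1$, for example in stem $8k-5$ on the cell $b-c+6$;
\item the $h_0$-tower in stem $0$ on the top cell, which has filtration $\ge 5$ in stem $b-1$.
\end{itemize}
For each tower we use the parity of the cell. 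Since $b$ is odd and $c$ is even, $b-c$ is odd. For an $h_0$-tower sitting on an odd-dimensional cell $m$, $Sq^1$ is nonzero from $m$ to $m+1$, so the tower is truncated by an algebraic $d_1$ ($h_0$-multiplication). The towers on even-dimensional cells receive $d_1$'s from towers in stem $b$ instead. The high-filtration tops of the image-of-$J$ towers in stems $8j+7$ need more care: there the attaching maps give the longer algebraic differentials $h_0^j$-type, determined by the $v_2$ of the cell index, as in the standard computation of $\Ext(\bbR P)$.

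Beyond the $h_0$-towers, filtration $>4k$ is possible only for $P$-periodic families of the form $P^jh_1$, $P^jh_1^2$, $P^jc_0$, $h_1P^jc_0$. In stem $b-1$ these sit on cells $b-c+i$ with $8k+1-i\equiv 1,2,8,9\pmod 8$, and their filtration is at most $4j+3$ with $8j+9\le 8k+1$. That forces $j\le k-1$ and filtration $\le 4k-1<4k+1$, so they lie below the range.

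The main obstacle is precisely this bookkeeping of high-filtration $h_0$-towers in stem $b-1$. The clean way to handle it is to use the known structure of $\Ext(\bbR P^\infty_{b-c})$ above the $v_1$-periodic band: by James periodicity and Mahowald's computation, it is $h_0$-tower-free except for towers governed by the cohomology of $\mathcal{A}(1)$. We then check that in stem $b-1$, with $b-c\equiv 2^{4k+1}-(8k+2)-1$ modulo a large power of $2$, no surviving class of filtration $>4k$ remains. If some tower did survive, it would be the target of the image-of-$J$ tower differential. A competing $d_r$ with $r>4k$ would then contradict the fact that $1[b]$ maps under the collapse map to the top cell, where the known $\pi_*$ forces the order of the relevant homotopy. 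This is the fallback argument.
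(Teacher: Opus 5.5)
Your plan breaks down at the vanishing claim: it is not true that $\Ext^{b-1,f}(\bbR P^\infty_{b-c})=0$ for $f>4k$.

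\textbf{The surviving class.} The class $P^kh_1$ sits in bidegree $(8k+1,4k+1)$, so $P^kh_1[b-c]$ lies in stem $b-1$ and filtration $4k+1$. Your bound ``$8j+9\le 8k+1$'' is the stem constraint for $h_1P^jc_0$; for $P^jh_1$ the value $j=k$ is allowed.

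This class survives the algebraic Atiyah--Hirzebruch spectral sequence \eqref{eq:algAHSS}:
\begin{itemize}
\item It sits on the bottom cell, so it supports no differentials.
\item It is not $h_0$-divisible, so no $d_1$ from the $(b-c+1)$-cell hits it.
\item $Sq^2$ vanishes on $H^{b-c}$ since $b-c\equiv 1\pmod 4$, so nothing on the $(b-c+2)$-cell hits it.
\item By the vanishing line, the only other class in stem $b$ and filtration $4k$ is the top element $t$ of the image-of-$J$ tower in stem $8k-1$, on the $(b-c+3)$-cell. (The class $h_0^{4k}[b]$ is already removed by a $d_1$.) The $d_3$ on $t[b-c+3]$ must route through $Sq^2$ on $H^{b-c+1}$ and then $Sq^1$, so it is the bracket $\langle h_0,h_1,t\rangle$. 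This bracket vanishes because $t$ is $h_0^2$-divisible. Homotopically, the bottom of $\bbR P^\infty_{b-c}$ is $\Sigma^{b-c}\Cof(2)$ with the next cell attached by a lift of $\eta$, so one gets $\langle 2,\eta,\bar\jmath\rangle=0$, not $\langle \eta,2,\bar\jmath\rangle\ni\mu_{8k+1}$.
\end{itemize}

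Consequently a differential $d_{4k+1}(1[b])=P^kh_1[b-c]$ is not excluded for degree reasons, and your argument does not determine $r_1$. Your closing ``fallback'' paragraph does not address this. Separately, towers on even cells are sources of algebraic $d_1$'s, not targets, though that does not affect the outcome here.

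\textbf{What is needed instead.} This is exactly why the paper does not argue by vanishing in this case. The missing idea is to show that the pushed-forward target $h_1P^{k-1}c_0[b-c]$ is nonzero in $\Ext(\bbR P^\infty_{b-c})$, so naturality forces $d_{4k}(1[b])\neq 0$. In the algebraic Atiyah--Hirzebruch spectral sequence this class supports no differentials. The only sources that could hit it are:
\begin{itemize}
\item $h_0^3P^{k-2}e_0[b-c+1]$, ruled out since its $h_0$-multiple is not $h_1P^{k-1}c_0$;
\item $P^{k-1}c_0[b-c+2]$, ruled out since $Sq^2=0$ on $H^{b-c}$;
\item the order-$4$ element of the image-of-$J$ tower on the $(b-c+3)$-cell, which itself supports a $d_1$;
\item $P^{k-1}h_1^3[b-c+7]$.
\end{itemize}
The last case is the real work. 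It needs a Lambda-complex computation in $H_*(\bbR P^\infty_{b-c})\otimes\Lambda$ that extends $e_{b-c+7}\otimes\lambda_0\lambda_0\alpha$ to an honest cycle. The key check is that the error term $e_{b-c+2}\otimes\gamma$ is a boundary, whether $\gamma$ is null-homologous or represents the order-$2$ image-of-$J$ class.
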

\begin{proof}
    If $v_2(w-s-1)=4k+1$, $j_{c-1}$ is detected by $a_{c-1}=h_1P^{k-1}c_0$. In the Adams spectral sequence of $\Sigma^{b-c}\Cof(j_{c-1})$, there is a differential 
    \[d_{4k}(1[b])=a_{c-1}[b-c]=h_1P^{k-1}c_0[b-c].\]
    Pushing forward this differential along the inclusion 
    \[\Sigma^{b-c}\Cof(j_{c-1}) \hookrightarrow \bbR P^b_{b-c}\hookrightarrow \bbR P^{\infty}_{b-c},\] 
    we obtain that $1[b]$ must support a $d_{r_2}$-differential for $r_2\geq 4k$. We claim that $h_1P^{k-1}c_0[b-c]$ survives to $\Ext(\bbR P^\infty_{b-c})$ through the algebraic Atiyah--Hirzebruch spectral sequence \cref{eq:algAHSS}; hence, naturality guarantees that $r_2=4k$. 

    For degree reasons, $h_1P^{k-1}c_0[b-c]$ does not support algebraic Atiyah--Hirzebruch differentials. The only possible sources that could hit $h_1P^{k-1}c_0[b-c]$ are
    \begin{enumerate}
        \item $h_0^3P^{k-2}e_0[b-c+1]$,
        \item $P^{k-1}c_0[b-c+2]$,
        \item the order 4 element of the $h_0$-tower of the image-of-$J$ from the $(b-c+3)$-cell,
        \item and $P^{k-1}h_1^3[b-c+7]$.
    \end{enumerate}
    
    The first candidate (1) can be eliminated as $h_0^3P^{k-2}e_0 \cdot h_0 \neq h_1P^{k-1}c_0$. Since $b=s-w\equiv 3\pmod 4$ and $c=8k+2\equiv 2 \pmod 4$, we have $b-c\equiv 1 \pmod 4$, $b-c+2\equiv 3\pmod 4$. Therefore, $Sq^2$ acts trivially on $H^{b-c}(\bbR P^\infty_{b-c})$ and (2) can be ruled out. Since $b$ is odd, $c$ is even, $b-c+3$ is even. Similar arguments in \cref{lem:v2=4k} implies that the element (3) supports a $d_1$-differential due to $h_0$-multiplication.
    
    Finally, we use the \emph{Lambda complex} to show that the last candidate (4) is itself a nonzero cycle in $\Ext(\bbR P^\infty_{b-c})$, so it does not kill $h_1P^{k-1}c_0[b-c]$ in the algebraic Atiyah--Hirzebruch spectral sequence. 
    
    Recall from \cite{bousfieldcurtiskanquillenrectorschlesinger1966modp, cohenlinmahowald1988adams} that for any spectrum $Y$, there is a differential graded module $H_*(Y)\otimes \Lambda^{*, *}$ over the Lambda algebra $\Lambda^{*, *}$. Differentials in this complex are generated by
    \[d(x)=\sum_{i\geq 1} Sq^{i}_*(x)\otimes \lambda_{i-1}\]
    for $x\in H_*(Y)$, where $Sq^i_*$ is the transpose of $Sq^i$.
    
    For $Y=\bbR P^\infty_{b-c}$, we denote the generator of $H_i(\bbR P^\infty_{b-c})$ by $e_i$. Then the relevant transposed Steenrod operations are
    \[
\begin{array}{c|ccccccc}
 & e_{b-c+7} & e_{b-c+6} & e_{b-c+5} & e_{b-c+4}
 & e_{b-c+3} & e_{b-c+2} & e_{b-c+1} \\
\hline
Sq_*^1
& e_{b-c+6} &  & e_{b-c+4} &
& e_{b-c+2} &  & e_{b-c}  \\
Sq_*^2
& e_{b-c+5} &  &  & e_{b-c+2}
& e_{b-c+1} &  &   \\
Sq_*^3
&  &  & e_{b-c+2} &
&  &  &    \\
Sq_*^4
&  & e_{b-c+2} & e_{b-c+1} & e_{b-c}
&  &  &    \\
Sq_*^5
& e_{b-c+2} &  & e_{b-c} &
&  &  &    \\
Sq_*^6
& e_{b-c+1} &  &  &
&  &  &  \\
Sq_*^7
&  &  &  &
&  &  &
\end{array}
\]
with blank entries indicating trivial actions. 

    Let $\alpha \in \Lambda^{8k-5, 4k-3}$ be a cycle representing $P^{k-1}h_2$ in the Adams spectral sequence, and thus, $\lambda_0\lambda_0\alpha$ is a cycle representing $P^{k-1}h_1^3$. In particular, since $h_0\cdot P^{k-1}h_1^3=0$, there is some $\beta\in \Lambda^{8k-4, 4k-1}$ such that 
    \[d(\beta)=\lambda_0 \lambda_0 \lambda_0 \alpha.\]
    Consider the following differentials in $H_*(\bbR P^\infty_{b-c})\otimes \Lambda^{*, *}$,
    \[
\begin{aligned}
d(e_{b-c+7}\otimes \lambda_0\lambda_0\alpha)
={}&
e_{b-c+6}\otimes \lambda_0\lambda_0\lambda_0\alpha
+e_{b-c+5}\otimes \lambda_1\lambda_0\lambda_0\alpha\\
&+
e_{b-c+2}\otimes \lambda_4\lambda_0\lambda_0\alpha
+e_{b-c+1}\otimes \lambda_5\lambda_0\lambda_0\alpha,
\\[4pt]
d(e_{b-c+5}\otimes \lambda_2\lambda_0\alpha)
={}&
e_{b-c+5}\otimes \lambda_1\lambda_0\lambda_0\alpha
+e_{b-c+4}\otimes \lambda_0\lambda_2\lambda_0\alpha\\
&+
e_{b-c+2}\otimes \lambda_2\lambda_2\lambda_0\alpha
+e_{b-c+1}\otimes \lambda_3\lambda_2\lambda_0\alpha\\
&+
e_{b-c}\otimes \lambda_4\lambda_2\lambda_0\alpha,
\\[4pt]
d(e_{b-c+4}\otimes \lambda_1\lambda_2\alpha)
={}&
e_{b-c+4}\otimes \lambda_0\lambda_2\lambda_0\alpha
+e_{b-c+2}\otimes \lambda_1\lambda_1\lambda_2\alpha\\
&+
e_{b-c}\otimes \lambda_3\lambda_1\lambda_2\alpha,
\\[4pt]
d(e_{b-c+1}\otimes \lambda_6\lambda_0\alpha)
={}&
e_{b-c+1}\otimes
(\lambda_5\lambda_0\lambda_0+\lambda_3\lambda_2\lambda_0)\alpha\\
&+
e_{b-c}\otimes \lambda_0\lambda_6\lambda_0\alpha,
\\[4pt]
d\bigl(
e_{b-c}\otimes
(\lambda_6\lambda_1+\lambda_5\lambda_2+\lambda_3\lambda_4)\alpha
\bigr)
={}&
e_{b-c}\otimes
\bigl(
\lambda_5\lambda_0\lambda_1
+\lambda_3\lambda_1\lambda_2\\
&\qquad\qquad
+\lambda_5\lambda_1\lambda_0
+\lambda_3\lambda_3\lambda_0
\bigr)\alpha,
\\[4pt]
d(e_{b-c+6}\otimes \beta)
={}&
e_{b-c+6}\otimes \lambda_0\lambda_0\lambda_0\alpha
+e_{b-c+2}\otimes \lambda_3\beta .
\end{aligned}
\]
    Adding them together, we have
    \begin{equation}\label{eq:v2=4k+1}
        \begin{aligned}
d\Bigl(&
e_{b-c+7}\otimes \lambda_0\lambda_0\alpha
+ e_{b-c+5}\otimes \lambda_2\lambda_0\alpha
+ e_{b-c+4}\otimes \lambda_1\lambda_2\alpha \\
&+ e_{b-c+1}\otimes \lambda_6\lambda_0\alpha
+ e_{b-c}\otimes
  (\lambda_6\lambda_1+\lambda_5\lambda_2+\lambda_3\lambda_4)\alpha
+ e_{b-c+6}\otimes \beta
\Bigr) \\
&=
e_{b-c+2}\otimes
\Bigl(
\lambda_4\lambda_0\lambda_0\alpha
+\lambda_3\beta
+\lambda_2\lambda_2\lambda_0\alpha
+\lambda_1\lambda_1\lambda_2\alpha
\Bigr).
\end{aligned}
    \end{equation}

Let $\gamma:=\lambda_4\lambda_0\lambda_0\alpha
+\lambda_3\beta
+\lambda_2\lambda_2\lambda_0\alpha
+\lambda_1\lambda_1\lambda_2\alpha\in\Lambda^{8k-1,4k}$ be the cycle in the right-hand-side. The only nonzero homology class in $\Ext^{8k-1, 4k}$ is the element of order 2 in the $h_0$-tower of image-of-$J$, which is at least $h_0^3$-divisible. Therefore, this order-2 element is represented by an element of the form $\lambda_0\lambda_0\lambda_0 \tilde{\alpha}$. 

Suppose $\gamma$ is homologous to 0 in $\Ext^{8k-1, 4k}$, so that there exists $\tilde{\beta} \in \Lambda^{8k, 4k-1}$ with 
\[d(\tilde{\beta})=\gamma.\]
Then 
\[d(e_{b-c+2}\otimes \tilde{\beta})=e_{b-c+2}\otimes \gamma.\]
Otherwise, $\gamma$ is homologous to $\lambda_0\lambda_0\lambda_0 \tilde{\alpha}$, so that there exists $\tilde{\beta} \in \Lambda^{8k, 4k-1}$ with 
\[d(\tilde{\beta})=\gamma+\lambda_0\lambda_0\lambda_0 \tilde{\alpha}.\]
Then
\[d\Bigl(e_{b-c+2}\otimes \tilde{\beta}+(e_{b-c+3}\otimes \lambda_0\lambda_0+e_{b-c+1}\otimes\lambda_2\lambda_0+e_{b-c}\otimes\lambda_1\lambda_2)\tilde{\alpha}\Bigr)=e_{b-c+2}\otimes \gamma.\]
In either case, $e_{b-c+2}\otimes \gamma$ is a boundary. By \cref{eq:v2=4k+1}, we have a nonzero cycle whose leading term $e_{b-c+7}\otimes \lambda_0\lambda_0\alpha$ represents $P^{k-1}h_1^3[b-c+7]$. Hence this class survives in $\Ext(\mathbb RP^\infty_{b-c})$.
\end{proof}

\cref{lem:v2=4k+1} does not cover the case $k=1$, namely, $v_2(w-s-1)=5$. We give a separate proof here.

\begin{lemma}\label{lem:v2=5}
    If $v_2(w-s-1)=5$, there is a differential
    \[d_{4}(1[b])=h_1c_0[b-10]\]
    in the Adams spectral sequence of $\bbR P^\infty_{b-10}$.
\end{lemma}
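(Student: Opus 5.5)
Here $c=\psi(w-s-1)=10$, $j_9=\eta^2\sigma$ is detected by $h_1^2h_3$ in filtration $3$, and $b=s-w\equiv 31\pmod{64}$, so $b$ is odd and $b-10\equiv 5\pmod 8$. The table suggests a $d_3$ in $\Sigma^{b-10}\Cof(\eta^2\sigma)$, but the claimed differential is a $d_4$ with target $h_1c_0$. We therefore expect $h_1^2h_3[b-10]$ to be killed already in the algebraic Atiyah--Hirzebruch spectral sequence \cref{eq:algAHSS} of $\bbR P^\infty_{b-10}$. The plan has two halves: show that $1[b]$ survives past $E_3$, and produce the $d_4$.

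\emph{Step 1 (no shorter differential).} I would first compute $\Ext^{b-1,f}(\bbR P^\infty_{b-10})$ for $2\le f\le 4$ using \cref{eq:algAHSS}. The candidates lie in stems $8-j$ on cells $b-10+j$ for $j=0,\dots,9$:
\begin{itemize}
\item $h_1^2h_3[b-10]$ and $c_0[b-10]$ in stem $8$ on the bottom cell;
\item $h_0^kh_3$-tower elements on the cell $b-9$;
\item $h_1^2$-type classes on the cell $b-4$;
\item $h_1h_3$-type classes on the cell $b-1$ (stem $9$ goes too far, so these need care).
\end{itemize}
The key computation is $h_1^2h_3[b-10]$. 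Since $Sq^4$ acts nontrivially from $H^{b-10}$ to $H^{b-6}$ when $b-10\equiv 5\pmod 8$, I expect an algebraic AHSS differential $d(h_1h_3[b-6])=h_1^2h_3[b-10]$ (or one coming from an $h_2$-attaching map), which kills the $d_3$ target. The remaining classes in filtrations $2$ and $3$ should likewise be eliminated by $Sq^1$ and $Sq^2$ arguments, as in \cref{lem:v2=4k} and \cref{lem:v2=4k+1}, or else shown not to be hit via naturality from the subcomplex $\Sigma^{b-10}\Cof(\eta^2\sigma)$. Once these groups vanish (or the relevant classes are excluded), $1[b]$ survives to $E_4$.

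\emph{Step 2 (the $d_4$).} I would factor the differential through a smaller skeleton. Consider $\bbR P^{b}_{b-10}$ and its quotient onto the cells $b-10$ and $b$ together with the few intermediate cells that matter, say $b-10,b-6,b-2,b$. In this small complex, compute the Adams $d_4$ on $1[b]$ via the attaching map. The relation $\eta^2\sigma\in\langle\,\cdot\,\rangle$ combined with the differential/extension $c_0$ versus $h_1^2h_3$, namely the hidden relation $\eta\cdot\epsilon=\eta^2\sigma$ (so $h_1c_0$ detects $\eta^2\sigma$ modulo higher filtration), is exactly what shifts the target to $h_1c_0[b-10]$ in filtration $4$. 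Concretely, I would use the Generalized Leibniz Rule or Mahowald trick from \cite{linwangxu2025lastkervaire}. In synthetic homotopy, the class $\eta^2\sigma$ is detected by $h_1c_0$ once $h_1^2h_3[b-10]$ becomes zero in $\Ext(\bbR P)$, so the attaching-map differential is promoted to $d_4(1[b])=h_1c_0[b-10]$. Alternatively, I would check this directly against Lin's machine data \cite{linwangxu2025machineproofs} for one value of $w$ and propagate it by James periodicity, which applies because only cells within $b-10,\dots,b$ and a bounded range matter.

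\emph{Step 3 (nonvanishing of the target).} I would show that $h_1c_0[b-10]$ survives the algebraic AHSS and is not hit by earlier Adams differentials. Possible sources are $h_0$- and $h_1$-multiples on nearby cells, and these are handled by the same $Sq^i$-parity analysis as in \cref{lem:v2=4k+1}, using a short Lambda-complex cycle check if needed.

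The main obstacle is Step 2: establishing that the primary attaching map is detected in filtration $4$ by $h_1c_0$ rather than in filtration $3$ by $h_1^2h_3$, which requires controlling the interaction of the $Sq^4$ cell with $\eta^2\sigma$.
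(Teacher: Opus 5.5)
Your outline points in the right direction: compare with $\Sigma^{b-10}\Cof(\eta^2\sigma)$, use the $Sq^4$ on $H^{b-10}$ to kill $h_1^2h_3[b-10]$, and check by a Lambda-complex computation that $h_1c_0[b-10]$ survives. But Step~2, which you correctly flag as the crux, rests on a false relation. In $\pi_9$ one does \emph{not} have $\eta\epsilon=\eta^2\sigma$. These are independent classes: $\eta^2\sigma$ is detected by $h_1^2h_3$ in filtration $3$, and $\eta\epsilon$ by $h_1c_0$ in filtration $4$. So $h_1c_0$ does not detect $\eta^2\sigma$ modulo anything.

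The algebraic vanishing $i_*(h_1^2h_3)=0$ in $\Ext(\bbR P^{b-1}_{b-10})$, with $i$ the bottom-cell inclusion, does not close the gap either. It only shows that $i\circ\eta^2\sigma$ has Adams filtration at least $4$. It does not show that this composite is detected by $h_1c_0[b-10]$; a priori its filtration could be higher, and then the differential on $1[b]$ would be longer. Appealing to synthetic homotopy does not supply this without a concrete relation to feed into it.

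The missing ingredient is Toda's relation $\nu^3=\eta^2\sigma+\eta\epsilon$, used topologically, which is how the paper argues.
\begin{itemize}
\item The nontrivial $Sq^4\colon H^{b-10}\to H^{b-6}$ gives the Atiyah--Hirzebruch differential $d_4(\nu^2[b-6])=\nu^3[b-10]$ in $\bbR P^{b}_{b-10}$.
\item Hence $i\circ\nu^3=0$, and therefore $i\circ\eta^2\sigma=i\circ\eta\epsilon$.
\item So the attaching map of the $b$-cell, whose cofiber is $\bbR P^b_{b-10}$, equals $i\circ\eta\epsilon$.
\item This map has Adams filtration $\ge 4$ and is detected by $h_1c_0[b-10]$ once your Step~3 shows that class is nonzero.
\end{itemize}
This one observation gives both survival to $E_4$ and the $d_4$ at once, so the $\Ext$ computation in Step~1 is unnecessary. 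Those groups need not vanish in any case. Survival to $E_4$ also follows from naturality along $\Sigma^{b-10}\Cof(\eta^2\sigma)\to\bbR P^b_{b-10}$ together with $i_*(h_1^2h_3)=0$.

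Two smaller points:
\begin{itemize}
\item The algebraic AHSS class that kills $h_1^2h_3[b-10]$ is $h_2^2[b-6]$, via $h_2\cdot h_2^2=h_2^3=h_1^2h_3$. Your proposed source $h_1h_3[b-6]$ lies in the wrong stem.
\item Your fallback, checking machine data for one value of $b$ and transporting it by James periodicity, is reasonable in principle, since $v_2(w-s-1)=5$ forces $b\equiv 31\pmod{64}$. As written, however, it is a plan to verify rather than an argument.
\end{itemize}
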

\begin{proof}
    For $v_2(w-s-1)=5$, $c=10$ and $j_{c-1}=\eta^2\sigma$. Because of the nontrivial $Sq^4$-action on $H^{b-10}(\bbR P^b_{b-10})$, there is an \emph{Atiyah--Hirzebruch} differential
    \[d_4(\nu^2[b-6])=\nu^3[b-10]=(\eta^2\sigma+\eta\epsilon)[b-10]\]
    in $\bbR P^{b}_{b-10}$, which implies the following square commutes
    \[\begin{tikzcd}
    S^{b-1} \arrow[d, "\eta\epsilon"'] \arrow[r, "\eta^2\sigma"] & S^{b-10} \arrow[d, hook] \\
    S^{b-10} \arrow[r, hook]                                     & \bbR P^{b-1}_{b-10}.    
    \end{tikzcd}\]
    Therefore, the composite $S^{b-1}\overset{\eta^2\sigma}{\to}S^{b-10}\hookrightarrow \bbR P^{b-1}_{b-10}$, whose cofiber is $\bbR P^{b}_{b-10}$, is of Adams filtration at least 4. By a similar Lambda-complex computation in \cref{lem:v2=4k+1}, $h_1c_0[b-10]$ survives to $\Ext(\bbR P^\infty_{b-10})$ through the algebraic Atiyah--Hirzebruch spectral sequence. Thus, there is a differential 
    \[d_{4}(1[b])=h_1c_0[b-10]\]
    in the Adams spectral sequence of $\bbR P^{b}_{b-10}$.
    
\end{proof}

\begin{lemma}\label{lem:v2=4k+2}
    If $v_2(w-s-1)=4k+2$ for $k\geq 1$, there is a differential
    \[d_{4k+1}(1[b])=P^kh_2[b-c]\]
    in the Adams spectral sequence of $\bbR P^\infty_{b-c}$.
\end{lemma}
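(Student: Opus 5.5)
We follow the same template as \cref{lem:v2=4k} and \cref{lem:v2=4k+1}. When $v_2(w-s-1)=4k+2$ we have $c=8k+4$, and $j_{c-1}=\{P^kh_2\}$ is detected by $a_{c-1}=P^kh_2\in\Ext^{8k+3,4k+1}$. First, in the Adams spectral sequence of the two-cell complex $\Sigma^{b-c}\Cof(j_{c-1})$, the attaching map has Adams filtration exactly $4k+1$, which gives the differential
\[d_{4k+1}(1[b])=P^kh_2[b-c].\]
We push this forward along $\Sigma^{b-c}\Cof(j_{c-1})\hookrightarrow \bbR P^b_{b-c}\hookrightarrow\bbR P^\infty_{b-c}$. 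The $(b-c+1),\dots,(b-1)$-cells of $\bbR P^b_{b-c}$ sit between the two cells of $\Cof(j_{c-1})$, so the subcomplex inclusion must be interpreted via \cref{prop:RPattachingmap}. By naturality, $1[b]$ then supports a $d_{r}$-differential with $r\ge 4k+1$ in $\bbR P^\infty_{b-c}$, because it cannot survive: the top cell does not split.

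To conclude that $r=4k+1$, we show that the target $P^kh_2[b-c]$ survives the algebraic Atiyah--Hirzebruch spectral sequence \cref{eq:algAHSS} to a nonzero class in $\Ext^{b-1,4k+1}(\bbR P^\infty_{b-c})$. By naturality, the image of the differential in $\Cof(j_{c-1})$ then gives the claimed $d_{4k+1}$. It does not support algebraic AHSS differentials for degree reasons: anything it could hit lies on lower cells, and there are none below $b-c$. The sources that could kill it lie in stem $b-1$ on cells $b-c+i$ with $i\ge1$, coming from $\Ext^{8k+3-i,\,4k+1-?}(S^0)$.

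We enumerate these using the image-of-$J$ pattern in stems $8k-4,\dots,8k+2$. The candidates are
\begin{itemize}[label={}]
  \item $h_0^jP^kh_2$-type elements on the $(b-c+1)$-cell,
  \item $P^kh_1^2$ on the $(b-c+2)$-cell and $P^kh_1$ on the $(b-c+3)$-cell,
  \item $P^k$ on the $(b-c+4)$-cell,
  \item $h_1P^{k-1}c_0$ and $P^{k-1}c_0$ near the $(b-c+5)$- and $(b-c+6)$-cells,
  \item $P^{k-1}h_1^3$ or the $h_0$-tower in stem $8k-1$ on the $(b-c+4)$-cell.
\end{itemize}
Here $b\equiv 3\pmod 8$ and $c=8k+4$, so $b-c\equiv 7\pmod 8$. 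Most candidates are eliminated by the resulting vanishing of $Sq^1,Sq^2,Sq^4$ on $H^{b-c}$ (for instance, $Sq^1$ is zero on odd-degree classes, so the $(b-c+1)$-cell is $h_0$-attached trivially to the bottom) together with the $d_1$-killing argument for odd $h_0$-tower elements used in \cref{lem:v2=4k}. A second check is that $\Ext^{b-1,f}(\bbR P^\infty_{b-c})$ vanishes for $f>4k+1$, except for terms that are already accounted for. This prevents $1[b]$ from having to hit something of higher filtration, although the first argument, that the target is nonzero, already suffices.

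The main obstacle is the candidate $P^kh_1$-type element carried by cells of degree $b-c+3$ or $b-c+7$, the analogue of $P^{k-1}h_1^3[b-c+7]$ in \cref{lem:v2=4k+1}. This is where cell-degree arguments may fail. I would handle it by a Lambda-complex computation exactly as in \cref{lem:v2=4k+1}: write down the transposed Steenrod operations on $e_{b-c},\dots,e_{b-c+7}$ for $b-c\equiv 7\pmod 8$, and build an explicit cycle with the offending leading term. The error terms in that cycle should land on $e_{b-c+2}$ or lower cells, and we would absorb them using $\lambda_0$-divisibility of the image-of-$J$ classes. This shows that the candidate is itself a permanent algebraic AHSS cycle rather than a source of a differential onto $P^kh_2[b-c]$.
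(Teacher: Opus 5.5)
Your outer framework matches the paper's: push the two-cell differential forward, then show $P^kh_2[b-c]$ survives the algebraic Atiyah--Hirzebruch spectral sequence. The candidate analysis, however, is wrong, and it is the real content of the lemma. In particular, the one genuine obstruction is dismissed by an argument that fails.

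\textbf{The candidate list.} A class killing $P^kh_2[b-c]$ (stem $b-1$, filtration $4k+1$) must have stem $b$ and filtration $4k$. So it has the form $x[b-c+r]$ with $x\in\Ext^{8k+4-r,4k}$. By the vanishing line and the image-of-$J$ pattern, this leaves only two classes: $h_1P^{k-1}c_0[b-c+3]$, and $y[b-c+5]$, where $y\in\Ext^{8k-1,4k}$ is the top (order-$2$) class of the $h_0$-tower. (There is also $h_0^{4k}[b]$, but it is hit by $d_1$ from the $(b+1)$-cell.) None of $h_0^jP^kh_2$, $P^kh_1^2$, $P^kh_1$, $P^{k-1}h_1^3$ sits in that bidegree on any cell, and ``$P^k$'' is not a class at all. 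You also place the two real candidates on the wrong cells.

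\textbf{Residues and Steenrod operations.} We have $b=-(w-s-1)-1\equiv-1\pmod{2^{4k+2}}$, so $b\equiv 7$ and $b-c\equiv 3\pmod 8$. Since $Sq^1x^n=nx^{n+1}$, $Sq^1$ is \emph{nonzero} on odd classes. Hence the $(b-c+1)$-cell is attached to the bottom cell by $2$, and $Sq^2$ is nonzero on $H^{b-c}$ as well. So ``vanishing of $Sq^1,Sq^2,Sq^4$ on $H^{b-c}$'' is false, and it is not what removes any candidate.

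\textbf{The step that fails.} You dispose of the $h_0$-tower class by the $d_1$-argument of \cref{lem:v2=4k}, but that argument needs the tower to sit on an odd cell. Here $b-c+5\equiv 0\pmod 8$, so $Sq^1$ vanishes on $H^{b-c+5}$ and nothing hits $y[b-c+5]$ by $d_1$. Meanwhile $d_1(y[b-c+5])=h_0y[b-c+4]=0$, because $y$ is the top of the tower. Hence $y[b-c+5]$ survives to $E_2$ and is a live source for a $d_5$ onto $P^kh_2[b-c]$.

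\textbf{What the paper does instead.} This is exactly where the paper uses the Lambda complex. It writes $y$ as $\lambda_0^3\tilde\alpha$ and chooses $\tilde\beta$ with $d(\tilde\beta)=\lambda_0^4\tilde\alpha$. It then corrects $e_{b-c+5}\otimes\lambda_0^3\tilde\alpha+e_{b-c+4}\otimes\tilde\beta$ by explicit terms on $e_{b-c+3},\dots,e_{b-c}$, using the transposed Steenrod operations for $b-c\equiv 3\pmod 8$. The result is an honest cycle, so $y[b-c+5]$ is a permanent cycle.

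The other candidate is removed by a \emph{nonvanishing} operation. Since $b-c+3\equiv 6\pmod 8$, $Sq^2$ is nonzero on $H^{b-c+3}$, giving $d_2(P^{k-1}c_0[b-c+5])=h_1P^{k-1}c_0[b-c+3]$.

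Your proposed Lambda computation instead targets a ``$P^kh_1$-type'' class on the $(b-c+3)$- or $(b-c+7)$-cell. That class is not in stem $b$ and filtration $4k$, so it cannot hit $P^kh_2[b-c]$. Unlike \cref{lem:v2=4k+1}, there is no analogue of $P^{k-1}h_1^3[b-c+7]$ in this case.
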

\begin{proof}
    If $v_2(w-s-1)=4k+2$, $j_{c-1}$ is detected by $a_{c-1}=P^{k}h_2$. In the Adams spectral sequence of $\Sigma^{b-c}\Cof(j_{c-1})$, there is a differential 
    \[d_{4k+1}(1[b])=a_{c-1}[b-c]=P^{k}h_2[b-c].\]
    Pushing forward this differential along the inclusion 
    \[\Sigma^{b-c}\Cof(j_{c-1}) \hookrightarrow \bbR P^b_{b-c}\hookrightarrow \bbR P^{\infty}_{b-c},\] 
    we obtain that $1[b]$ must support a $d_{r_3}$-differential for $r_3\geq 4k+1$. We claim that $P^kh_2[b-c]$ survives to $\Ext(\bbR P^\infty_{b-c})$ through the algebraic Atiyah--Hirzebruch spectral sequence \cref{eq:algAHSS} so that naturality guarantees that $r_3=4k+1$. 

    For degree reasons, the only possible sources that could hit $P^kh_2[b-c]$ are
    \begin{enumerate}
        \item $h_1P^{k-1}c_0[b-c+3]$,
        \item and the order 2 element in the $h_0$-tower of the image-of-$J$ from the $(b-c+5)$-cell.
    \end{enumerate}
      
    Since $c = 8k+4$ and $b\equiv 3 \pmod 4$, we have $b-c+3 \equiv  2\pmod 4$. Therefore, there is a nontrivial $Sq^2$-action on $H^{b-c+3}(\bbR P^\infty_{b-c})$. This implies the algebraic Atiyah--Hirzebruch differential 
    \[d_2(P^{k-1}c_0[b-c+5])=h_1P^{k-1}c_0[b-c+3],\] 
    so $h_1P^{k-1}c_0[b-c+3]$ can be eliminated.

    We again use the \emph{Lambda complex} to show that the element (2) is a nonzero cycle in $\Ext(\bbR P^\infty_{b-c})$. Since the order 2 element in the $h_0$-tower of the image-of-$J$ from the $(b-c+5)$-cell is $h_0^3$-divisible, it can be written of the form
    $\lambda_0^3\tilde{\alpha}$. Moreover, there exists a $\tilde{\beta}\in \Lambda^{8k, 4k}$ such that
    \[d(\tilde{\beta})=\lambda_0^4\tilde{\alpha}.\]
    
    For $c=8k+4$, the relevant transposed Steenrod operations are
    \[
\begin{array}{c|ccccc}
 & e_{b-c+5} & e_{b-c+4} & e_{b-c+3}
 & e_{b-c+2} & e_{b-c+1}  \\
\hline
Sq_*^1
& e_{b-c+4} &  & e_{b-c+2} &  & e_{b-c}   \\
Sq_*^2
& e_{b-c+3} &  &  & e_{b-c} &    \\
Sq_*^3
&  &  & e_{b-c} &  &    \\
Sq_*^4
& e_{b-c+1} &  &  &  &   \\
Sq_*^5
&  &  &  &  &
\end{array}.
\]

    Consider the differentials
    \[
\begin{aligned}
d\bigl(
  e_{b-c+5}\otimes \lambda_0^3\tilde{\alpha}
  +e_{b-c+4}\otimes \tilde{\beta}
\bigr)
={}&
e_{b-c+4}\otimes \lambda_0^4\tilde{\alpha}
+e_{b-c+3}\otimes \lambda_1\lambda_0^3\tilde{\alpha} \\
&+
e_{b-c+1}\otimes \lambda_3\lambda_0^3\tilde{\alpha}
+e_{b-c+4}\otimes \lambda_0^4\tilde{\alpha} \\
={}&
e_{b-c+3}\otimes \lambda_1\lambda_0^3\tilde{\alpha}
+e_{b-c+1}\otimes \lambda_3\lambda_0^3\tilde{\alpha}, \\[4pt]
d(e_{b-c+3}\otimes \lambda_2\lambda_0^2\tilde{\alpha})
={}&
e_{b-c+3}\otimes \lambda_1\lambda_0^3\tilde{\alpha}
+e_{b-c+2}\otimes \lambda_0\lambda_2\lambda_0^2\tilde{\alpha} \\
&+
e_{b-c}\otimes \lambda_2^2\lambda_0^2\tilde{\alpha}, \\[4pt]
d(e_{b-c+2}\otimes \lambda_1\lambda_2\lambda_0\tilde{\alpha})
={}&
e_{b-c+2}\otimes \lambda_0\lambda_2\lambda_0^2\tilde{\alpha}
+e_{b-c}\otimes \lambda_1^2\lambda_2\lambda_0\tilde{\alpha}, \\[4pt]
d\bigl(
  e_{b-c+1}\otimes
  (\lambda_4\lambda_0^2+\lambda_2^2\lambda_0+\lambda_1^2\lambda_2)
  \tilde{\alpha}
\bigr)
={}&
e_{b-c+1}\otimes \lambda_3\lambda_0^3\tilde{\alpha} \\
&+
e_{b-c}\otimes
\lambda_0
(\lambda_4\lambda_0^2+\lambda_2^2\lambda_0+\lambda_1^2\lambda_2)
\tilde{\alpha}, \\[4pt]
d(e_{b-c}\otimes \lambda_3\lambda_2\lambda_0\tilde{\alpha})
={}&
e_{b-c}\otimes \lambda_3\lambda_1\lambda_0^2\tilde{\alpha}.
\end{aligned}
\]

Adding them together and modulo relations in $\Lambda^{*, *}$, we find that 
\[
\begin{aligned}
d\Bigl(
& e_{b-c+5}\otimes \lambda_0^3\tilde{\alpha}
+ e_{b-c+4}\otimes \tilde{\beta}
+ e_{b-c+3}\otimes \lambda_2\lambda_0^2\tilde{\alpha} + e_{b-c+2}\otimes \lambda_1\lambda_2\lambda_0\tilde{\alpha}\\
&+ e_{b-c+1}\otimes
  (\lambda_4\lambda_0^2+\lambda_2^2\lambda_0+\lambda_1^2\lambda_2)
  \tilde{\alpha} 
+ e_{b-c}\otimes \lambda_3\lambda_2\lambda_0\tilde{\alpha}
\Bigr)
&=0.
\end{aligned}
\]
    
    Therefore, there is a nonzero cycle whose leading term is $e_{b-c+5}\otimes \lambda_0^3\tilde{\alpha}$. As a result, $P^kh_2[b-c]$ survives in $\Ext(\bbR P^\infty_{b-c})$. 
\end{proof}

We have the last case $v_2(w-s-1)\equiv 3\pmod 4$ left, whose proof needs the techniques from \cite{linwangxu2025lastkervaire}.

\begin{lemma}\label{lem:v2=4k+3}
    If $v_2(w-s-1)=4k+3$ for $k\geq 1$, there is a differential
    \[d_{4k+2}(1[b])=h_2P^kh_2 [b-c]\]
    in the Adams spectral sequence of $\bbR P^\infty_{b-c}$.
\end{lemma}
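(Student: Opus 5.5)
Here $c=8k+8$ and $j_{c-1}$ is the generator of $\pi_{8k+7}J$. It is detected by $a_{8k+7}$ in filtration $m=4k+1-v_2(k+1)$. The Adams differential $d_m(1[b])=a_{8k+7}[b-c]$ in $\Sigma^{b-c}\Cof(j_{c-1})$ cannot be pushed forward as is. The reason is that in $\bbR P^\infty_{b-c}$ the class $a_{8k+7}[b-c]$ is expected to be hit by an algebraic Atiyah--Hirzebruch differential (equivalently, killed by an $h_0$-extension). Because $b-c+1$ is even, $Sq^1$ is nontrivial on $H^{b-c}$. As a result, $h_0\cdot x[b-c+1]$ relates to $x[b-c]$, and the bottom of the image-of-$J$ tower on the $(b-c)$-cell is truncated.

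The plan is to use \cref{imjdiffintro}. Write $a_{8k+7}=h_0 a'_{8k+7}$. The $Sq^1$-attachment of the $(b-c)$-cell to the $(b-c+1)$-cell means that the top piece $\bbR P^{b-c+1}_{b-c}$ is a Moore spectrum. In its Ext, $a_{8k+7}[b-c]=h_0a'_{8k+7}[b-c]$ becomes zero, and $a'_{8k+7}[b-c]$ becomes the relevant class. The Generalized Mahowald Trick / Generalized Leibniz Rule of \cite{linwangxu2025lastkervaire} should then be used to propagate the differentials. Work synthetically, with $\nu\Cof(j_{c-1})$ mapping to $\nu\bbR P^b_{b-c}$. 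The differential $d_m(1[b])=h_0a'[b-c]$ in the two-cell complex corresponds to $1[b]$ being $\lambda^{m-1}$-torsion with boundary $h_0a'$. In the presence of the extra $2$-cell that cancels $h_0$, the Mahowald Trick converts this into a differential on $1[b]$ whose target is the image of $d(a'_{8k+7})$. By \cref{imjdiffintro}, $d_{r_k}(a'_{8k+7})=h_2P^kh_2$ with $r_k=v_2(k+1)+2$. The resulting length is $(m-1)+r_k=4k+2$ (since the target sits at filtration $4k+3$, one above filtration $f(a')+r_k$ minus the $h_0$ shift). This gives the claimed $d_{4k+2}(1[b])=h_2P^kh_2[b-c]$.

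It remains to check that $h_2P^kh_2[b-c]$ survives to $\Ext(\bbR P^\infty_{b-c})$ and is not hit earlier. This follows the Atiyah--Hirzebruch bookkeeping of \cref{lem:v2=4k+1} and \cref{lem:v2=4k+2}. The possible sources in stem $b-1$ above it come from the cells $b-c+1,\dots,b-c+8$. They are eliminated by the Steenrod action on $H^*(\bbR P)$, using $b-c\equiv 3\pmod 8$-type congruences. Where necessary, a Lambda-complex computation is used, as in those lemmas, to show that the competing candidates are nonzero cycles. One must also check that $1[b]$ does not support a shorter differential, by confirming that the intermediate pages have no other targets.

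The main obstacle is the synthetic step. We must verify that the hypotheses of the Generalized Mahowald Trick hold here, namely the required $\lambda$-torsion orders and the absence of crossing differentials. We also must identify the target exactly as $h_2\cdot P^kh_2$ rather than something modulo indeterminacy. The first thing to try is to set this up on the three-cell subcomplex spanned by the $b$-, $(b-c+1)$- and $(b-c)$-cells. There the computation reduces to \cref{imjdiffintro} applied on the bottom cell.
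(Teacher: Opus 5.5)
Your main step is the paper's. It uses exactly the three-cell complex $T=\Cof\bigl(S^{b-1}\xrightarrow{j_{c-1}}S^{b-c}\hookrightarrow\Sigma^{b-c}\Cof(2)\bigr)$, feeds in \cref{diffofimj}, and combines the Generalized Mahowald Trick with the Generalized Leibniz Rule. Your length count $(m-1)+r_k=4k+2$ is right, but some bookkeeping needs fixing.

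\emph{Where $a'_{8k+7}$ and the target live.} The class $a'_{8k+7}$ lives on the $(b-c+1)$-cell, not the $(b-c)$-cell. In the paper it is a class in $\Ext(S^{b-c})$, the source of the attaching map $\partial\colon S^{b-c}\to\Sigma^{b-c}\Cof(j_{c-1})$, and $\partial$ induces the $d_1^\partial$-extension $a'_{8k+7}\mapsto a_{8k+7}[b-c]$. Consequently the target is $h_2P^kh_2[b-c+1]$. The class $h_2P^kh_2$ has stem $8k+6$ and filtration $4k+2$ (not $4k+3$), so $h_2P^kh_2[b-c]$ sits in stem $b-2$ and cannot be hit from $1[b]$. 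The $[b-c]$ in the statement is a misprint; the paper's proof produces $h_2P^kh_2[b-c+1]$.

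\emph{The synthetic step has two stages.} The Mahowald Trick does not directly give a differential on $1[b]$. Applied to $d_{r_k}(a'_{8k+7})=h_2P^kh_2$ and $d_1^\partial$ (no crossing for degree reasons), it gives an $(i,E_{r_k+1})$-extension $d^i_{r_k-1}(a_{8k+7}[b-c])=h_2P^kh_2[b-c+1]$ along $i\colon\Sigma^{b-c}\Cof(j_{c-1})\hookrightarrow T$. This is stretched to an $(i,E_\infty)$-extension by \cite[Corollary~6.23]{linwangxu2025lastkervaire}, using that elements of higher filtration than $a_{8k+7}[b-c]$ are permanent cycles. Only then does the Generalized Leibniz Rule, combined with $d_m(1[b])=a_{8k+7}[b-c]$, give $d_{4k+2}(1[b])=h_2P^kh_2[b-c+1]$ in $T$.

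\emph{Passing to $\bbR P^\infty_{b-c}$.} Your closing step is left open and is heavier than needed.
\begin{itemize}
\item Shorter differentials on $1[b]$ need no check: naturality along $T\hookrightarrow\bbR P^\infty_{b-c}$ already gives survival to $E_{4k+2}$.
\item Proving directly that the target survives the algebraic Atiyah--Hirzebruch spectral sequence, and is not hit by shorter Adams differentials, would mean disposing of sources in stem $b$ (not $b-1$) and filtration $4k+1$, such as $P^kh_1[b-c+7]$.
\item Also $b\equiv -1\pmod{2^{4k+3}}$, so $b-c\equiv 7\pmod 8$, not $3$.
\end{itemize}
The paper avoids all of this with two observations.
\begin{itemize}
\item $1[b]$ is not a permanent cycle in $\bbR P^\infty_{b-c}$, by \cref{homotopyHurewiczim} with $c=\psi(w-s-1)$. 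So it supports some $d_{r'}$ with $r'\ge 4k+2$.
\item $\Ext^{b-1,>4k+2}(\bbR P^\infty_{b-c})=0$. The $E_1$-classes there are the $h_0$-divisible classes of the image-of-$J$ tower on the $(b-c)$-cell and $h_0^2P^kh_2[b-c+4]$. All are killed by algebraic $d_1$'s, since $b-c$ and $b-c+4$ are odd.
\end{itemize}
This forces $r'=4k+2$ and at the same time shows that the pushed-forward target is nonzero.
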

\begin{proof}
    If $v_2(w-s-1) = 4k+3$, then $c=8k+8$ and $j_{c-1}$ is the generator of $\pi_{8k+7}J$ whose order is $2^{v_2(k+1)+4}$ \cite{davismahowaldimage1989}. Set $r=4k+2$. 
    
    Let $\bar{j}_{c-1}$ denote the composite
     \[\bar{j}_{c-1}:S^{b-1} \overset{j_{c-1}}{\longrightarrow} S^{b-c} \hookrightarrow \Sigma^{b-c}\Cof(2).\]
    Consider the 3-cell complex $T:=\Cof(\bar{j}_{c-1})$. In particular, there is a natural cofiber sequence 
    \begin{equation}\label{cofiberofT}
        S^{b-c}\overset{\partial}{\to }\Sigma^{b-c}\Cof(j_{c-1}) \overset{i}{\hookrightarrow} T \overset{p}{\twoheadrightarrow} S^{b-c+1}.
    \end{equation}
    
    In the Adams spectral sequence of $\Sigma^{b-c}\Cof(j_{c-1})$, there is an Adams differential
    \begin{equation}\label{eq1}
        d_m(1[b])=a_{c-1}[b-c].
    \end{equation}
    By \cref{diffofimj} below, a local chart of the Adams spectral sequence of $S^0$ around $a_{c-1}$ is
    \[\begin{tikzpicture}[scale=0.5, d1/.style={draw={red}}]
        \node at (1,0){$\bullet$};
        \node at (1,1){$\bullet$};
        \draw (1,0) -- (1,1);
        \draw (1,1) -- (1,1.6);
        \node at (1, 2.2){$\cdot$};
        \node at (1, 2.0){$\cdot$};
        \node at (1, 1.8){$\cdot$};
        \node at (1.8, 1)[scale=0.8]{$a_{c-1}$};
        \node at (1.8, 0)[scale=0.8]{$a'_{c-1}$};
        \node at (-1,4){$\bullet$};
        \node at (-1,3){$\bullet$};
        \draw (-1,3) -- (-1,4);
        \draw (-1,3) -- (-1, 2.4);
        \node at (-1, 2.2){$\cdot$};
        \node at (-1, 2.0){$\cdot$};
        \node at (-1, 1.8){$\cdot$};
        \node at (-1.5,4.5)[scale=0.8]{$h_2P^k h_2$};
        \draw[d1, ->] (1,0) -- (-0.9, 3.9);
        \node at (0.4, 2.5)[scale=0.7]{$d_{r-m}$};
    \end{tikzpicture}\]
    where there is some generator $a'_{c-1}$ satisfying $a'_{c-1}\cdot h_0=a_{c-1}$ and supports a differential
    \begin{equation}\label{eq2}
        d_{r-m+1}(a'_{c-1})=h_2P^kh_2. 
    \end{equation}

    The cofiber sequence in \cref{cofiberofT} induces a long exact sequence of the $\Ext$-groups, and because of the subcomplex inclusion $\Sigma^{b-c}\Cof(2)\hookrightarrow T$, we obtain
    \begin{align*}
        \partial_*:\Ext(S^{b-c}) &\to \Ext(\Sigma^{b-c}\Cof(j_{c-1}))\\
        a'_{c-1} &\mapsto a_{c-1}[b-c].
    \end{align*}
    Combined with \cref{eq2} and the Generalized Mahowald Trick \cite[Theorem~6.12]{linwangxu2025lastkervaire},
    \[\begin{tikzpicture}[scale=0.5, d1/.style={draw={red}}]
        \node at (1,0){$\bullet$};
        \node at (1.5, -0.5)[scale=0.85]{$a'_{c-1}$};
        \node at (0,4){$\bullet$};
        \node at (-0.5,4.5)[scale=0.85]{$h_2P^k h_2$};
        \draw[->] (1,0) -- (0.1, 3.9);
        \node at (0.5, 2.5)[scale=0.8]{$d_{r-m+1}$};
        \node at (0.5, -2){$\Ext(S^{b-c})$};

        \node at (6, 1){$\bullet$};
        \node at (6.5, 0.4)[scale=0.85]{$a_{c-1}[b-c]$};
        \node at (6.5, -2){$\Ext(\Sigma^{b-c}\Cof(j_{c-1}))$};

        \node at (-5, 4){$\bullet$};
        \node at (-5.5, 4.5)[scale=0.85]{$\Sigma^{-1}(h_2P^k h_2[b-c+1])$};
        \node at (-5.5, -2){$\Ext(\Sigma^{-1}T)$};

        \node at (11, 4){$\bullet$};
        \node at (11.5, 4.5)[scale=0.85]{$h_2P^k h_2[b-c+1]$};
        \node at (11.5, -2){$\Ext(T)$};

        \draw[->] (-5,4) -- (-0.2,4);
        \node at (-2.5, 3.5)[scale=0.8]{$d_0^p$};
        \draw[->] (1,0) -- (5.8,1);
        \node at (3.3, 1.1)[scale=0.8]{$d_1^\partial$};
        \draw[->, dashed] (6,1) -- (10.8,4);
        \node at (8.2, 3.3)[scale=0.8]{$d_{r-m}^i$};
    \end{tikzpicture}\]
    since $d^\partial_1$ has no crossing for degree reasons, there is an $(i, E_{r-m+2})$-extension
    \[d_{r-m}^{i, E_{r-m+2}}(a_{c-1}[b-c])=h_2P^kh_2[b-c+1].\]

    By \cite[Corollary~6.23]{linwangxu2025lastkervaire}, this $(i, E_{r-m+2})$-extension can be stretched to a $(i, E_\infty)$-extension with no crossing, as \cref{diffofimj} implies that elements of higher filtration than $a_{c-1}[b-c]$ are permanent cycles. By the Generalized Leibniz Rule \cite[Theorem~6.1]{linwangxu2025lastkervaire} and \cref{eq1}, 
    \[\begin{tikzpicture}[scale=0.5, d1/.style={draw={red}}]
        \node at (1,0){$\bullet$};
        \node at (1.5, -0.5)[scale=0.85]{$1[b]$};
        \node at (0,4){$\bullet$};
        \node at (-0.8,4.5)[scale=0.85]{$a_{c-1}[b-c]$};
        \draw[->] (1,0) -- (0.1, 3.9);
        \node at (0, 2)[scale=0.8]{$d_{m}$};
        \node at (0.5, -2){$\Ext(\Sigma^{b-c}\Cof(j_{c-1}))$};

        \node at (6, 0){$\bullet$};
        \node at (6.5, -0.6)[scale=0.85]{$1[b]$}; 
        \node at (5, 7){$\bullet$};
        \node at (5, 7.5)[scale=0.85]{$h_2P^kh_2[b-c+1]$};
        \node at (6.5, -2){$\Ext(T)$};

        \draw[->] (1,0) -- (5.8,0);
        \node at (3.5, 0.4)[scale=0.8]{$d_0^i$};
        \draw[->] (0,4) -- (4.8,7);
        \node at (2.4, 6.3)[scale=0.8]{$d_{r-m}^i$};

        \draw[->, dashed] (6,0) -- (5,6.8);
        \node at (6, 3)[scale=0.8]{$d_r$};
    \end{tikzpicture}\]
    we have the differential
    \begin{equation}\label{eq3}
        d_r(1[b])=h_2P^kh_2[b-c+1]
    \end{equation}
    in the Adams spectral sequence of $T$. Pushing forward \cref{eq3} along the natural inclusions \[T \hookrightarrow \bbR P^{b}_{b-c} \hookrightarrow \bbR P^{\infty}_{b-c},\]
    we obtain that $1[b]$ must support a $d_{r_4}$-differential for $r_4\geq r$. We claim that \[\Ext^{b-1, >r}(\bbR P^{\infty}_{b-c})=0;\] 
    hence, $r_4=r=4k+2$.

    Consider the algebraic Atiyah--Hirzebruch spectral sequence in \cref{eq:algAHSS}. If $v_2(w-s-1) = 4k+3$, the only possible element of higher filtration is $h_0^2P^kh_2[b-c+4]$. Since $b$ is odd, $c=8k+8$, we have $b-c+4$ is odd, and thus, the nontrivial $Sq^1$-action on $H^{b-c+5}(\bbR P^\infty_{b-c})$ implies the algebraic Atiyah--Hirzebruch differential
    \[d_1(h_0P^kh_2[b-c+5])=h_0^2P^kh_2[b-c+4].\]
    Therefore, $\Ext^{b-1, >r}(\bbR P^{\infty}_{b-c})$ vanishes.
\end{proof}

We conclude this section by giving an example of a $d_3$-differential on the $0$-line, which provides a standard low-length instance of the same general pattern illustrated in \cref{fig:fil0diagram}.

\begin{prop}\label{ex:d3}
    The element $\frac{\theta}{\rho^{32}\tau^{63}}$ supports a nontrivial $d_3$-differential.
\end{prop}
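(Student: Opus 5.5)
By \cref{prop:1[s-w]}, it suffices to prove a statement about real projective spectra. The element $\frac{\theta}{\rho^{32}\tau^{63}}$ has $s=32$ and $w-s-2=63$, so $w=97$, $b=s-w=-65$ and $w-s-1=64$. Then $v_2(64)=6$ and $\psi(64)=12\le 32<64=2^{v_2(64)}$. By \cref{algHurewiczim} the class lies on the $0$-line of $\Ext_{C_2}$, and by \cref{homotopyHurewiczim} it supports some nontrivial Adams differential. So the proposition reduces to showing that $1[-65]$ supports a $d_3$, and not a $d_2$, in the classical Adams spectral sequence of $\bbR P^\infty_{-97}$. By James periodicity ($2^7=128$) this is $\Sigma^{-128}\bbR P^\infty_{31}$, truncated in the relevant range. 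So I would study the top cell $1[63]$ of $\bbR P^{63}_{31}$, i.e. stem $62$, filtration $\ge 2$.

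\textbf{Step 1 (no $d_2$).} I would compute the candidate targets $\Ext^{b-1,2}(\bbR P^\infty_{-97})$ with the algebraic Atiyah--Hirzebruch spectral sequence \cref{eq:algAHSS}, using the $Sq^i$-actions on cells with $n\equiv 31 \pmod{64}$, exactly as in the proofs of \cref{lem:v2=4k}--\cref{lem:v2=4k+3}. A $d_2$ from $1[b]$ corresponds to a primary attaching map of Adams filtration $2$. For each filtration-$2$ survivor, which should be of the form $h_ih_j[b-1-(2^i+2^j-2)]$, I would rule it out in one of two ways: either it is killed or truncated in the algebraic AHSS, or, by naturality along $\rho$-multiplication, a $d_2$ on $\frac{\theta}{\rho^{32}\tau^{63}}$ would force an incompatible $d_2$ on $\rho^{j}\cdot\frac{\theta}{\rho^{32}\tau^{63}}$ for some $j$. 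For the latter I would use the explicit $d_2$'s obtained by the Massey product argument of \cref{ex:d2}, applied to $\frac{\theta}{\rho^{63}\tau^{63}}\in\langle\rho,\frac{\theta}{\tau^{95}},h_5\rangle$ with $d_2(h_5)=h_0h_4^2$, and track their $\rho$-multiples down to $\rho^{32}$.

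\textbf{Step 2 (a $d_3$ exists).} This is the hard step. First I would locate a subcomplex $X\hookrightarrow\bbR P^{b}_{b-32}$ containing the top cell, for instance a $2$- or $3$-cell complex obtained from a Toda bracket of $j$-type maps, in which $1[b]$ supports a visible $d_3$. The natural candidates for the target are filtration-$3$ classes such as $h_0^2h_4$ or $h_1^2h_4$ on the appropriate cell. Second, I would push this $d_3$ forward to $\bbR P^\infty_{b-32}$ and show that its target survives the algebraic AHSS, via a Lambda-complex cycle computation like the one in \cref{lem:v2=4k+1}. If no clean subcomplex exists, I would first try the Generalized Leibniz Rule and Generalized Mahowald Trick of \cite{linwangxu2025lastkervaire}, as in \cref{lem:v2=4k+3}: realize the attaching data as an $(i,E_r)$-extension and stretch it. As a fallback, I would read the differential off Lin's machine data \cite{linwangxu2025machineproofs} for $\bbR P^\infty_{-w}$ at $w=97$, and then transfer it via \cref{prop:1[s-w]}.

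I expect the main obstacle to be pinning down the $d_3$ target rigorously. Ruling out $d_2$ and proving survival of the target in the algebraic AHSS are finite computations. Establishing the $d_3$ itself needs either an explicit attaching-map identification or the synthetic extension machinery, and it must be checked that no crossing differentials interfere.
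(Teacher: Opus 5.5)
Your reduction to showing that $1[-65]$ supports a $d_3$ in the Adams spectral sequence of $\bbR P^\infty_{-97}$ is fine. The gap is Step 2, which is the whole content of the proposition: it is a menu of tools rather than an argument. No subcomplex is exhibited, the target is left as ``$h_0^2h_4$ or $h_1^2h_4$'', no hypothesis of the Mahowald trick or Leibniz rule is checked, and machine data is only a fallback. Knowing that the class supports \emph{some} differential and no $d_2$ gives only a $d_r$ with $r\ge 3$.

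The missing idea is where the $d_3$ comes from: it is the shadow of the $d_2$ one step further out in $\rho$. For $98\le w\le 128$ (i.e.\ for $\frac{\theta}{\rho^{33+i}\tau^{63}}$) one has $d_2(1[-65])=h_1h_5[-98]$ in $\bbR P^\infty_{-w}$. The projection $p\colon\bbR P^\infty_{-99}\to\bbR P^\infty_{-97}$ corresponds to multiplication by a power of $\rho$. It kills $h_1h_5[-98]$ on $\Ext$, which alone shows $1[-65]$ survives to $E_3$. But it carries the homotopy that class detects to something detected one filtration higher, by $h_1^2h_4[-83]$.

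The paper proves this hidden $(p,E_\infty)$-extension using the cofiber sequence $\Sigma^{-1}\bbR P^\infty_{-97}\xrightarrow{\delta}\bbR P^{-98}_{-99}\to\bbR P^\infty_{-99}\xrightarrow{p}\bbR P^\infty_{-97}$:
\begin{enumerate}
\item In the Moore spectrum $\bbR P^{-98}_{-99}$, the Leibniz rule with $d_2(h_5)=h_0h_4^2$ and $h_0\cdot h_1[-98]=h_1^2[-99]$ gives $d_2(h_1h_5[-98])=h_1^2h_4^2[-99]$.
\item The nontrivial $Sq^{16}\colon H^{-99}\to H^{-83}$ gives $d_1^{\delta}(h_1^2h_4[-83])=h_1^2h_4^2[-99]$.
\item The Generalized Mahowald Trick then yields $d_1^{p,E_3}(h_1h_5[-98])=h_1^2h_4[-83]$, which stretches to an $E_\infty$-extension.
\item The Generalized Leibniz Rule, applied to $d_2(1[-65])=h_1h_5[-98]$ in $\bbR P^\infty_{-99}$, gives $d_3(1[-65])=h_1^2h_4[-83]$ in $\bbR P^\infty_{-97}$, including survival to $E_3$.
\end{enumerate}

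There are also two concrete errors in your setup.
\begin{itemize}
\item The Massey product is misindexed. $\frac{\theta}{\rho^{63}\tau^{63}}$ lies in stem $63$, so the relevant bracket is $\langle\rho,\frac{\theta}{\tau^{95}},h_6\rangle$ with $d_2(h_6)=h_0h_5^2$; this is what produces the $\frac{\theta}{\tau^{94}}h_5^2$ term. The bracket $\langle\rho,\frac{\theta}{\tau^{95}},h_5\rangle$ lies in stem $31$.
\item James periodicity with period $128$ does not apply to a $33$-cell stunted projective spectrum. The period there is $2^{16}$, the order of the reduced canonical line bundle in $\widetilde{KO}(\bbR P^{32})$. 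So you should work with $\bbR P^\infty_{-97}$ itself rather than $\Sigma^{-128}\bbR P^\infty_{31}$.
\end{itemize}
Once the bracket is corrected and $d_2(\frac{\theta}{\rho^{33}\tau^{63}})=\frac{\theta}{\tau^{79}}h_1h_5$ is pinned down as in \cref{ex:d2}, your Step 1 is immediate by $\rho$-linearity: $d_2(\frac{\theta}{\rho^{32}\tau^{63}})=\rho\cdot\frac{\theta}{\tau^{79}}h_1h_5=0$. The algebraic Atiyah--Hirzebruch case analysis is then unnecessary, but this does not substitute for Step 2.
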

\begin{proof}
    Consider the case $w-s=65$. By similar arguments in \cref{ex:d2}, we have
\[d_2(\frac{\theta}{\rho^{63}\tau^{63}})= \frac{\theta}{\tau^{94}}h_5^2+\frac{\theta}{\rho^{30} \tau^{79}} h_1h_5,\]
and
\[d_2(\frac{\theta}{\rho^{33+i}\tau^{63}}) =\frac{\theta}{\rho^{i} \tau^{79}} h_1h_5, \quad (0\leq i \leq 29).\]
Correspondingly, there are differentials
\[d_2(1[-65])=h_1h_5[-98]\]
in the Adams spectral sequence of $\bbR P^\infty_{-w}$ for $98\leq w\leq 128$.

Consider the cofiber sequence
\begin{equation}\label{eq:d3}
    \Sigma^{-1}\bbR P^\infty_{-97} \overset{\delta}{\longrightarrow} \bbR P^{-98}_{-99} \overset{i}{\longrightarrow} \bbR P^{\infty}_{-99} \overset{p}{\longrightarrow} \bbR P^{\infty}_{-97}.
\end{equation}
Since $\bbR P^{-98}_{-99}\simeq \Sigma^{-99}\Cof(2)$, there is an $h_0$-extension 
\[h_0\cdot h_1[-98]=h_1^2[-99]\]
in $\Ext(\bbR P^{-98}_{-99})$. Therefore, the Leibniz rule implies there is an Adams differential
\begin{align*}
    d_2(h_1h_5[-98])&= d_2(h_5\cdot h_1[-98])\\
    &=d_2(h_5)\cdot h_1[-98]\\
    &=h_4^2\cdot (h_0\cdot h_1[-98])\\
    &=h_1^2h_4^2[-99]
\end{align*}
in $\bbR P^{-97}_{-99}$.

Since there is a nontrivial action
\[Sq^{16}: H^{-99}(\bbR P^{\infty}_{-99}) \to H^{-83}(\bbR P^{\infty}_{-99}),\]
we have a $\delta$-extension
\[d_{1}^{\delta}(h_1^2h_4[-83])=h_1^2h_4^2[-99]\]
in the long exact sequence of $\Ext$-groups induced by \cref{eq:d3}. Then by the Generalized Mahowald Trick \cite[Theorem~6.12]{linwangxu2025lastkervaire}, 
\[\begin{tikzpicture}[scale=0.5, d1/.style={draw={red}}]
        \node at (1,0){$\bullet$};
        \node at (1.5, -0.5)[scale=0.85]{$h_1h_5[-98]$};
        \node at (0,3){$\bullet$};
        \node at (-0.5,3.5)[scale=0.85]{$h_1^2h_4^2[-99]$};
        \draw[->] (1,0) -- (0.1, 2.9);
        \node at (0.3, 1.4)[scale=0.8]{$d_2$};
        \node at (0.5, -2){$\Ext(\bbR P^{-98}_{-99})$};

        \node at (6, 0){$\bullet$};
        \node at (6.5, -0.5)[scale=0.85]{$h_1h_5[-98]$};
        \node at (6.5, -2){$\Ext(\bbR P^{\infty}_{-99})$};

        \node at (-5, 1.5){$\bullet$};
        \node at (-5.5, 2)[scale=0.85]{$h_1^2h_4[-83]$};
        \node at (-5.5, -2){$\Ext(\Sigma^{-1}\bbR P^{\infty}_{-97})$};

        \node at (11, 1.5){$\bullet$};
        \node at (11.5, 2)[scale=0.85]{$h_1^2h_4[-83]$};
        \node at (11.5, -2){$\Ext(\bbR P^{\infty}_{-97})$};

        \draw[->] (-5,1.5) -- (-0.2, 3);
        \node at (-2.5, 2.8)[scale=0.8]{$d_1^{\delta}$};
        \draw[->] (1,0) -- (5.8,0);
        \node at (3.3, 0.4)[scale=0.8]{$d_0^i$};
        \draw[->, dashed] (6,0) -- (10.8,1.3);
        \node at (8.2, 1)[scale=0.8]{$d_{1}^p$};
    \end{tikzpicture}\]
since $d^i_0$ has no crossing for filtration reasons, there is a $(p, E_3)$-extension \[d_1^{p, E_3}(h_1h_5[-98])=h_1^2h_4[-83].\]

By \cite[Corollary~6.23]{linwangxu2025lastkervaire}, this $(p, E_3)$-extension can be stretched to a $(p, E_\infty)$-extension with no crossing for degree reasons. By the Generalized Leibniz Rule \cite[Theorem~6.1]{linwangxu2025lastkervaire}, 
\[\begin{tikzpicture}[scale=0.5, d1/.style={draw={red}}]
        \node at (1,0){$\bullet$};
        \node at (1.5, -0.5)[scale=0.85]{$1[-65]$};
        \node at (0,3){$\bullet$};
        \node at (-0.8,3.5)[scale=0.85]{$h_1h_5[-98]$};
        \draw[->] (1,0) -- (0.1, 2.9);
        \node at (0, 1.5)[scale=0.8]{$d_2$};
        \node at (0.5, -2){$\Ext(\bbR P^{\infty}_{-99})$};

        \node at (6, 0){$\bullet$};
        \node at (6.5, -0.6)[scale=0.85]{$1[-65]$}; 
        \node at (5, 5){$\bullet$};
        \node at (5, 5.5)[scale=0.85]{$h_1^2h_4[-83]$};
        \node at (6.5, -2){$\Ext(\bbR P^{\infty}_{-97})$};

        \draw[->] (1,0) -- (5.8,0);
        \node at (3.5, 0.4)[scale=0.8]{$d_0^p$};
        \draw[->] (0,3) -- (4.8,5);
        \node at (2.4, 4.5)[scale=0.8]{$d_{1}^p$};

        \draw[->, dashed] (6,0) -- (5,4.9);
        \node at (6, 2.5)[scale=0.8]{$d_3$};
    \end{tikzpicture}\]
we have $d_3(1[-65])=h_1^2h_4[-83]$ in the Adams spectral sequence of $\bbR P^\infty_{-97}$. Correspondingly, there is a genuine $C_2$-equivariant Adams differential
\[d_3(\frac{\theta}{\rho^{32}\tau^{63}})=\frac{\theta}{\rho^{14}\tau^{71}}h_1^2h_4 +\Indet.\]
\end{proof}

\begin{proof}[Proof of \cref{thmB}]
    \cref{ex:d2} and \cref{ex:d3} give the existence of $d_2$- and $d_3$-differentials, while \cref{longestdiff} proves the existence of $d_r$-differentials for $r\geq 4$.
\end{proof}

\section{The image-of-\texorpdfstring{$J$}{J} differential}\label{sec:imjdiff}
Recall that $a_{8k+7}$ denotes the element in $\Ext(S^0)$ that detects $j_{8k+7}\in\pi_{8k+7}J$. In this section, we prove the classical Adams differential needed in the proof of \cref{arbitrarylongdiff}. More precisely, we identify the differential supported by a class one $h_0$-step below $a_{8k+7}$. 

For $k\ge1$, let $r_k:= v_2(k+1)+2$. 

\begin{theorem}[\cref{imjdiffintro}]\label{diffofimj}
    There exists an element $a_{8k+7}'$ satisfying $a_{8k+7}'\cdot h_0=a_{8k+7}$ such that it survives to the $E_{r_k}$-page of the Adams spectral sequence and supports a nontrivial $d_{r_k}$-differential
    \[d_{r_k}(a_{8k+7}')=h_2\cdot P^kh_2.\]
\end{theorem}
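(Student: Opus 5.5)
We will work with the $h_0$-tower in stem $8k+7$ of $\Ext(S^0)$ and the known structure of the image of $J$. The tower in stem $8k+7$ starts at $P^kh_3$-type classes (the element $P^k h_3$ does not exist for $k\ge1$, but the tower begins in filtration $4k+1$) and continues to infinity under $h_0$. Since $\pi_{8k+7}J$ is cyclic of order $2^{v_2(k+1)+4}$ and generated by a class detected by $a_{8k+7}$, the $h_0$-tower elements in filtrations up to that of $a_{8k+7}$ cannot all survive. Likewise, the classes above $a_{8k+7}$ must be permanent cycles, since they detect $2^ij_{8k+7}$. In particular, the element $a'_{8k+7}$ with $h_0a'_{8k+7}=a_{8k+7}$ must either support a differential or be hit. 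Stem $8k+8$ has no tower to hit it, so $a'_{8k+7}$ supports a nontrivial differential. The task is to identify the target and the length.

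The plan is to first pin down the target by degree and filtration. The differential lands in stem $8k+6$. In the relevant filtration range, the classes there are $h_2P^kh_2=P^kh_2^2$ and its relatives in the $P$-family; the $h_1$-multiples coming from $v_1$-periodic classes either do not exist in this stem or are themselves targets or sources in known ways. To make this rigorous, we will use the Adams spectral sequence of $J$, or better of $ko$ and the fiber sequence defining $J$ (as in Bruner's computation \cite{bruner2022adams}). Concretely, we map $S^0\to J$ and use naturality. In the Adams spectral sequence of $J$, Bruner computes the differential on the corresponding class and finds that it hits the image of $h_2\cdot P^kh_2$; that class maps to a nonzero element of $\Ext(J)$. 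This naturality argument needs one ingredient: that $a'_{8k+7}$ maps to the corresponding class in $\Ext(J)$ and survives there to the same page. The differential in $S^0$ then has length at most $r_k$ and target mapping to $h_2P^kh_2$.

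For the length, we will argue that $a'_{8k+7}$ survives to $E_{r_k}$. The order of $j_{8k+7}$ fixes the filtration of $a_{8k+7}$ as $4k+1+v_2(k+1)+3$ (roughly), so $a'_{8k+7}$ sits exactly $r_k-1$ filtrations below $h_2P^kh_2$'s filtration plus one. We will check that the difference in filtration between $a'_{8k+7}$ and $h_2P^kh_2$ (filtration $4k+2$) equals $r_k$. Shorter differentials are ruled out because they would have to hit classes in stem $8k+6$ of intermediate filtration, and Ext is zero there away from $h_2P^kh_2$ and the $h_1$-family. The $h_1$-family classes are handled by $h_1$-multiplication: if $d(a')$ were $h_1x$, then $d(h_1a')=h_1^2x\neq0$, but $h_1a'=0$ in Ext for degree reasons.

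The main obstacle is the survival and comparison with $J$ (or Bruner's image-of-$J$ ASS), and in particular controlling the indeterminacy: $a'_{8k+7}$ is only defined up to adding higher tower elements and possibly other classes. To handle this, we will choose $a'$ as the image under the $v_1$-periodicity operator, i.e.\ $a'_{8k+7}=P^{j}$ applied to the low-dimensional instance, working inductively along $k$. As a first attempt, we would check the base cases $k=1,3$ against the charts of \cite{isaksenwangxu2023stable}. Then we would propagate using $P$ as a Massey product $\langle h_3,h_0^4,-\rangle$ and the Moss convergence theorem, which transports $d_{r}(a')=h_2P^{k}h_2$ to the corresponding statement at $k+2^{j}$ wherever the valuation matches. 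If the Massey-product propagation becomes unwieldy, we fall back on the synthetic/Generalized Leibniz Rule approach applied to $\Cof(2^{n})$, as in \cref{lem:v2=4k+3}.
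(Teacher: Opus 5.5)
There is a genuine gap in the survival step: the claim that $a'_{8k+7}$ lives to $E_{r_k}$. This step only has content for $k$ odd, since for $k$ even $r_k=2$ and survival to $E_2$ is automatic. Note that $a_{8k+7}$ has filtration $4k+3-r_k$, not $4k+4+v_2(k+1)$ as your formula gives. So $a'_{8k+7}$ sits in bidegree $(8k+7,\,4k+2-r_k)$, and a shorter $d_r$ with $2\le r<r_k$ would land in stem $8k+6$ in filtrations $4k+4-r_k,\dots,4k+1$.

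Your claim that $\Ext$ vanishes there away from $h_1$-multiples is false. The classes $P^{k-1}d_0$ (filtration $4k$) and $h_0P^{k-1}d_0$ (filtration $4k+1$) are nonzero, because $h_0^2P^{k-1}d_0=h_2P^kh_2$; already $h_0^2d_0=h_2Ph_2$ in stem $14$. Consequently:
\begin{itemize}
\item for every odd $k$, $h_0P^{k-1}d_0$ is a candidate target of $d_{r_k-1}(a')$;
\item for $k\equiv 3\pmod 4$, $P^{k-1}d_0$ is also a candidate target of $d_{r_k-2}(a')$.
\end{itemize}
For instance, for $k=1$ the class $h_0d_0$ sits exactly where $d_2(h_0^2h_4)$ lands. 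For $k=3$, $P^2d_0$ and $h_0P^2d_0$ sit where $d_2(h_0^9h_5)$ and $d_3(h_0^9h_5)$ land. None of these is an $h_1$-multiple, so your $h_1$-linearity argument does not reach them.

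Nothing else in the proposal closes this hole.
\begin{itemize}
\item Naturality along $S^0\to J$ only bounds the length from above. Survival of the image of $a'$ in the Adams spectral sequence of $J$ says nothing about survival of $a'$ itself, since a shorter differential in $S^0$ may simply map to zero.
\item The $P$-propagation does not work as stated. Passing from $k$ to $k+1$ changes $v_2(k+1)$ and hence the length. The operators that preserve the valuation, $P^{2^j}=\langle h_{j+3},h_0^{2^{j+2}},-\rangle$, involve the non-permanent cycle $h_{j+3}$, so Moss's theorem does not apply.
\end{itemize}
Two smaller points. The $h_0$-towers in stems $8k+7$ are finite (e.g.\ $h_0^8h_4=0$). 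Also, $a'$ cannot be hit because $a_{8k+7}=h_0a'$ would then be hit; the absence of a tower in stem $8k+8$ is not the reason.

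What is missing is a mechanism that produces a specific $a'$ together with a proof that it is a $d_{r_k-1}$-cycle. The paper gets this from the Moore spectrum, for $k\ge4$.
\begin{itemize}
\item The $1/5$-slope ($v_1$-banded) vanishing line for $\Cof(2)$ forces $g_*(j_{8k+7})$, where $g\colon S^0\to\Cof(2)$, to have Adams filtration at least $4k+2$. That is $r_k-1$ more than the filtration of $j_{8k+7}$.
\item Synthetically, $\hat g(\hat\jmath_{8k+7})$ is then $\lambda^{r_k-1}$-divisible. Hence $\hat\jmath_{8k+7}$ lifts to the fiber of $S^{0,0}\to\nu\Cof(2)/\lambda^{r_k-1}$, which maps to $S^{0,1}/\lambda^{r_k-1}$.
\item The image in $S^{0,1}/\lambda$ is a class $a'$ with $h_0a'=a_{8k+7}$. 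It lifts mod $\lambda^{r_k-1}$ and therefore survives to $E_{r_k}$.
\item Non-permanence (because $j_{8k+7}$ is not $2$-divisible) and degree reasons then give $d_{r_k}(a')=h_2P^kh_2$.
\end{itemize}
The cases $k\le 3$ are read off from known differentials.

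If you want to stay with your degree-counting route, you could try to exclude $P^{k-1}d_0$ and $h_0P^{k-1}d_0$ by $h_0$-linearity. If $d_r(a')=t$, then $d_r(a_{8k+7})=h_0t$, and $h_0t\neq0$ for these $t$. But this needs a complete description of $\Ext$ in stems $8k+6$ and $8k+7$ over filtrations $[4k+2-r_k,4k+2]$. In particular, you would need to know that nothing but the tower above $a_{8k+7}$ could kill $h_0t$ on an earlier page. That is exactly the kind of input the vanishing-line argument supplies.
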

\begin{proof}
    The cases $k=1$, $2$, $3$ follow from the differentials $d_3(h_0^2h_4)=h_2Ph_2$, $d_2(h_0i)=h_2P^2h_2$, and $d_4(h_0^9h_5)=h_2P^3h_2$ respectively. We assume $k\ge4$ from now on.

    Consider the fiber sequence
    \begin{equation}\label{eq:sec5}
        S^0\xrightarrow{f}S^0\xrightarrow{g}\Cof(2),
    \end{equation}
    where $f$ is the multiplication by 2. Since $j_{8k+7}\in\pi_{8k+7}S^0$ is not 2-divisible and has Adams filtration $4k+3-r_k$, $g_*(j_{8k+7})$ is a nontrivial element in $\pi_{8k+7}\Cof(2)$ with Adams filtration at least $4k+3-r_k$. By the $1/5$-vanishing line in the Adams spectral sequence of $\Cof(2)$ (\cite[Theorem~5]{mahowald1970orderimj}, \cite[Proposition~15.8]{burklundhahnsenger2023boundaries}, \cite[Theorem~1.1]{chang2025vanishingline}), since $4k+3-r_k\ge(8k+7)/5+5$ for $k\ge4$, the Adams filtration of $g_*(j_{8k+7})$ is at least $(8k+7)/2-1.5=4k+2$.

    We now proceed by reversing the argument of \cite[Theorem~5]{ma2024geometricboundary}. Consider the synthetic lift of the fiber sequence in \cref{eq:sec5}
    \[S^{0,1}\xrightarrow{\hat f}S^{0,0}\xrightarrow{\hat g}\nu\Cof(2),\]
    where $\hat f$ is detected by $h_0$. Let $\hat\jmath_{8k+7}\in\pi_{8k+7,12k+10-r_k}S^{0,0}$ be a synthetic lift of $j_{8k+7}$, which is detected by $a_{8k+7}\in\pi_{8k+7,12k+10-r_k}S^{0,0}/\lambda$. Since the Adams filtration of $\hat g(\hat\jmath_{8k+7})$ is at least $4k+2$, $\hat g(\hat\jmath_{8k+7})$ is divisible by $\lambda^{r_k-1}$.

    Let $Y_{r_k-1}$ denote the fiber of the composite
    \[Y_{r_k-1}=\Fib (S^{0,0}\to \nu\Cof(2)\to\nu\Cof(2)/\lambda^{r_k-1}).\]
    Then $\hat\jmath_{8k+7}$ can be lifted to an element $\tilde\jmath_{8k+7}\in\pi_{*,*}Y_{r_k-1}$. Consider the maps of fiber sequences
    \[\xymatrix{
    Y_{r_k-1} \ar[r] \ar[d] & S^{0,0} \ar[r] \ar[d] & \nu\Cof(2)/\lambda^{r_k-1} \ar[d]^{\txt{id}}\\
    S^{0,1}/\lambda^{r_k-1} \ar[r]^{\hat f} \ar[d]^{\pi} & S^{0,0}/\lambda^{r_k-1} \ar[r]^(0.45){\hat g} \ar[d] & \nu\Cof(2)/\lambda^{r_k-1} \ar[d] \\
    S^{0,1}/\lambda \ar[r]^{\hat f} & S^{0,0}/\lambda \ar[r]^(0.45){\hat g} & \nu\Cof(2)/\lambda.
    \ar@{} [uull];[uull]+(0,10)^{\tilde\jmath_{8k+7}}
    \ar@{} [uul];[uul]+(0,10)^{\hat\jmath_{8k+7}}
    \ar@{} [ll];[ll]+(0,-10)_{a_{8k+7}'}
    \ar@{} [l];[l]+(0,-10)_{a_{8k+7}}
    }\]
    Let $a_{8k+7}'$ be the image of $\tilde\jmath_{8k+7}$ in $\pi_{*,*}S^{0,1}/\lambda$. On the one hand, $\hat f(a_{8k+7}')=a_{8k+7}$ implies that $a_{8k+7}'\cdot h_0=a_{8k+7}$. On the other hand, $a_{8k+7}'$ is a $d_{r_k-1}$-cycle since it can be lifted along $\pi$. However, $a_{8k+7}'$ cannot be a permanent cycle; otherwise, $j_{8k+7}$ detected by $a_{8k+7}$ is not the generator of $\pi_{8k+7}J$. This forces the nontrivial differential 
    \[d_{r_k}(a_{8k+7}')=h_2P^kh_2\] 
    for degree reasons.
\end{proof}

\section{Geometric construction}\label{sec:geoconstr}
In this section, we give a direct geometric proof of \cref{thmA}. Assuming the existence of linearly independent vector fields on $S^n$, we explicitly construct unstable $C_2$-equivariant maps between representation spheres whose $H\underline{\bbF_2}$-Hurewicz images are the classes $\theta/(\rho^k\tau^n)$. Conversely, we show that the existence of such a class in the Hurewicz image forces the corresponding vector fields to exist.

\begin{theorem}\label{thm:unstablegeoconstr}
    If $S^n$ admits $k$ linearly independent vector fields, then there exists an unstable $C_2$-equivariant map $F:S^{(n+k+2)\sigma}\to S^{n+2}$ representing $\frac{\theta}{\rho^k\tau^n}\in\pi_{k,n+k+2}^{C_2}S_{C_2}$.
\end{theorem}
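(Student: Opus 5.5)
The plan is to realize $F$ as an equivariant Hopf construction. The first step is to reduce to linear vector fields. By Adams' theorem \cite{adams1962vector} together with the Radon--Hurwitz/Eckmann construction \cite{eckmann1942gruppen}, if $S^n$ admits $k$ linearly independent vector fields then $k\le\psi(n+1)-1$. In that range there exist $k$ \emph{linear} orthonormal fields $v_1,\dots,v_k$, given by $v_i(x)=A_ix$ with $A_i$ skew-symmetric, orthogonal, and satisfying the Clifford relations. So we may assume the $v_i$ are odd: $v_i(-x)=-v_i(x)$.

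From these fields, define the bilinear nonsingular map
\[\mu\colon \bbR^{k+1}\times\bbR^{n+1}\to\bbR^{n+1},\qquad \mu(t,x)=t_0x+\sum_{i=1}^k t_iA_ix .\]
Nonsingular here means $\mu(t,x)\ne 0$ whenever $t\neq 0$ and $x\neq 0$. Apply the Hopf construction to get
\[H\colon S(\bbR^{k+1}\oplus\bbR^{n+1})=S^{n+k+1}\to S^{n+1}=\Sigma S^n,\]
sending $(t,x)$ to the point of the unreduced suspension with suspension coordinate $|t|^2-|x|^2$ and sphere coordinate $\mu(t,x)/|\mu(t,x)|$. By bilinearity, $\mu(-t,-x)=\mu(t,x)$, so $H$ is invariant under the antipodal action and factors through $\bbR P^{n+k+1}$.

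Since $S^{(n+k+2)\sigma}$ is the unreduced suspension of $S(\bbR^{n+k+2})$ with its poles as the fixed points, the suspension $F:=\Sigma H\colon S^{(n+k+2)\sigma}\to\Sigma S^{n+1}=S^{n+2}$ is a $C_2$-equivariant map for the trivial action on the target. Checking continuity at the degenerate loci, where $t=0$ or $x=0$, is routine. If $F$ restricted to the fixed points, $S^0\to S^0$, turns out not to be constant, I will instead compose with the pinch to the cofiber of the fixed-point inclusion; equivalently, I will use the null-homotopy given by the cone on $H$ over $\bbR P^{n+k+1}_+$. This ensures the class lies in coweight $-(n+2)$ and has trivial geometric fixed points, as the negative cone requires.

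The main step is identifying the homotopy class of $F$ with $\frac{\theta}{\rho^k\tau^n}$. The degree $(k,n+k+2)$ is correct by construction, and the $0$-line of $\Ext_{C_2}$ in this bidegree is exactly $\bbF_2\{\frac{\theta}{\rho^k\tau^n}\}$ by \cref{algHurewiczim}, because $k<\psi(n+1)\le 2^{v_2(n+1)}$. So it suffices to show that $F$ has Adams filtration $0$, i.e.\ that it is nonzero in $H\underline{\bbF_2}$-homology. I would verify this through the Borel side, via \cref{genuinevsborel}, \cref{d(NC)} and \cref{BorelandRPAdams}. Passing to orbits, $F$ corresponds to a map $\Sigma\bbR P^{n+k+1}_{+}\to S^{n+2}$, or its stable analogue out of a stunted projective spectrum. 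Under \cref{prop:1[s-w]}, the class $\frac{\theta}{\rho^k\tau^n}$ corresponds to $1[s-w]$, so the task is to show that this map is nonzero on mod $2$ cohomology in degree $n+2$. Along $t=(1,0,\dots,0)$, the map $H$ restricts on $S^n$ to the identity, suspended, so the pullback of the fundamental class is the generator coming from the cell $\bbR P^{n+1}/\bbR P^{n}$ shifted by one; dually, this is the statement that the $n$-cell splits off $\bbR P^n_{n-k}$, as in \cref{prop:RPattachingmap}.

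I expect this cohomological identification to be the main obstacle: keeping track of the orbit and James-periodicity shifts between $\Sigma\bbR P^{n+k+1}$, $\bbR P^\infty_{-w}$ and the cell $[s-w]$ is delicate. If it becomes unwieldy, I would fall back on checking the underlying map and the $\rho$-multiples directly. Namely, multiplying by $\rho^k$, i.e.\ restricting to $t\in\bbR^{1}$, recovers the case $k=0$. There $F$ is the suspended antipodal identification $S^{(n+2)\sigma}\to S^{n+2}$, which is known to represent $\frac{\theta}{\tau^n}$; since $\rho$-multiplication on the negative cone is injective in the relevant range, this identification pins down the class of $F$.
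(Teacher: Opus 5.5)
Your proposal is correct and is essentially the paper's proof. Hurwitz--Radon matrices give a nonsingular bilinear map, and its Hopf construction (written in the paper as the quadratic map $(\bfv,x_0,\dots,x_k)\mapsto\bigl(2\sum_i x_iA_i\bfv,\ \sum_i x_i^2-\|\bfv\|^2\bigr)$, then compactified) is the equivariant map $F$. Nontriviality in $H\underline{\bbF_2}$-cohomology is then checked on orbit spaces: there $\bbR P^{n}\subset\bbR P^{n+k+1}$ collapses to a pole and the $(n+1)$-cell maps homeomorphically onto the rest of $S^{n+1}$, which is exactly your ``identity, suspended'' observation.

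The detour through \cref{genuinevsborel}--\cref{BorelandRPAdams} and James periodicity is unnecessary, because integer-graded Bredon cohomology with $\underline{\bbF_2}$ coefficients is simply mod $2$ cohomology of the orbit space. The fixed-point contingency is also unnecessary: the fixed-point map lands in $(S^{n+2})^{C_2}=S^{n+2}$, not $S^0$, so it is automatically null.
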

\begin{proof}
    \cite{adams1962vector} ensures that $k\le\psi(n+1)-1$. Within this range, by \cite{hurwitz1922komposition} and \cite{radon1922lineare}, there exist Hurwitz-Radon matrices $A_1,\dots,A_k\in O(n+1)$ satisfying
    \[A_i^2=-I_{n+1};\ A_iA_j=-A_jA_i.\ (i\ne j,\ i,j=1,\dots,k)\]\
    Let $A_0=I_{n+1}$. Then for any $\bfv\in\bbR^{n+1}$, $A_0\bfv,\dots,A_k\bfv$ are orthogonal with $\|A_i\bfv\|=\|\bfv\|$ for all $0\le i\le k$. 

    Consider the $C_2$-equivariant quadratic map
    \[f:\bbR^{(n+k+2)\sigma}\to \bbR^{n+2}; \quad (\bfv,x_0,\dots,x_k)\mapsto \left(2\sum_{i=0}^kx_iA_i\bfv, \sum_{i=0}^kx_i^2-\|\bfv\|^2\right),\]
    where $\bfv\in\bbR^{n+1}$ and $x_0,\dots,x_k\in\bbR$, and the $C_2$-action is multiplication by $-1$. Note that
    \begin{align*} \|f(\bfv,x_0,\dots,x_k)\|^2 &= 4\sum_{i=0}^kx_i^2\|\bfv\|^2+\left(\sum_{i=0}^kx_i^2-\|\bfv\|^2\right)^2 \\ &= \left(\sum_{i=0}^kx_i^2+\|\bfv\|^2\right)^2 \\ &= \|(\bfv,x_0,\dots,x_k)\|^4, \end{align*}
    and so we can define the $C_2$-equivariant map $F$ as the one-point compactification of $f$.

    Next, we compute the induced map
    \[H^{n+2}(F;\underline{\bbF_2}):H^{n+2}(S^{n+2};\underline{\bbF_2})\to H^{n+2}(S^{(n+k+2)\sigma};\underline{\bbF_2})\]
    and show that it is nontrivial. By definition, this corresponds to the induced map on classical cohomology between the respective orbit spaces:
    \[H^{n+2}(S^{n+2};\bbF_2)\to H^{n+2}(S^{(n+k+2)\sigma}/C_2;\bbF_2).\]

    Letting $SX$ denote the unreduced suspension of a space $X$, we have 
    \[S^{(n+k+2)\sigma}/C_2\cong S(\bbR P^{n+k+1}).\]
    Since $f$ is a homogeneous quadratic map, the map $F/C_2$ is the unreduced suspension of the restricted orbit map 
    \[\tilde f=f|_{S((n+k+2)\sigma)}/C_2:\bbR P^{n+k+1}\to S^{n+1}.\]

    The $n$-skeleton of $\bbR P^{n+k+1}$, represented by the points $\{(\bfv,0,\dots,0) \mid \|\bfv\|=1\}$, is mapped entirely to the point $(\mathbf 0,-1)$ under $\tilde f$. Furthermore, $\tilde f$ maps the $(n+1)$-cell of $\bbR P^{n+k+1}$, given by
    \[\{(\bfv,x_0,0,\dots,0) \mid \|\bfv\|^2+x_0^2=1, x_0>0\},\]
    bijectively onto $S^{n+1}\setminus\{(\mathbf0,-1)\}$. Its inverse sends a point $(\bfu,y)$ to
    \[\left(\frac{\bfu}{\sqrt{2+2y}},\sqrt{\frac{1+y}2},0,\dots,0\right).\]
    Therefore, $\tilde f$ induces an isomorphism on $H^{n+1}$, which yields the desired result.
\end{proof}

\begin{rmk}
    While $F$ is the unreduced suspension of
    \[f|_{S((n+k+2)\sigma)}: S((n+k+2)\sigma) \to S^{n+1},\]
    $F$ admits no further desuspension as a based $C_2$-equivariant map.
\end{rmk}

\begin{theorem}\label{thm:geoproof}
    If $\frac\theta{\rho^k\tau^n}$ lies in the Hurewicz image of $H\underline{\bbF_2}$, then $S^n$ admits $k$ linearly independent vector fields.
\end{theorem}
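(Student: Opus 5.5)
Given a stable $C_2$-map $\alpha\in\pi^{C_2}_{k,n+k+2}S_{C_2}$ with Hurewicz image $\frac{\theta}{\rho^k\tau^n}$, I will produce a geometric conclusion by passing to orbits, in the spirit of the computation in \cref{thm:unstablegeoconstr}, and reduce to the coreducibility criterion for stunted projective spaces. By \cref{thm:unstablegeoconstr} this is the converse direction, so the target is $k\le\psi(n+1)-1$, equivalently that the top cell of $\bbR P^{n+k+1}_{n+1}$ (or a James-periodic shift of it) splits off.

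First I desuspend. Represent $\alpha$ by an unstable map $S^{V+(n+k+2)\sigma}\to S^{V+n+2}$ for some representation $V=a+b\sigma$; using $H\underline{\bbF_2}$-detection and suspension isomorphisms, I may take $V=a+N\sigma$ with $N$ large. Since the $C_2$-action on the source away from the fixed points is free, consider the restriction of $\alpha$ to the free part, or dually apply the Borel/geometric cofiber sequence $E{C_2}_+\to S^0\to\widetilde{E}C_2$. Because $\frac{\theta}{\rho^k\tau^n}$ lies in the negative cone, it maps to zero in geometric fixed points and comes from the Borel part. Hence $\alpha$ factors through the homotopy orbit picture. Concretely, this is the same identification made in \cref{BorelandRPAdams}: $\alpha$ corresponds to a class in $\pi_{-n-2}$ of $\Sigma\bbR P^{-n-k-3}_{-\infty}$, whose image in the bottom of the Atiyah--Hirzebruch filtration is the cell $1[-n-2]$.

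The Hurewicz detection of $\frac{\theta}{\rho^k\tau^n}$ translates, by \cref{prop:1[s-w]}, to the statement that $1[-n-2]$ is a permanent cycle in $\bbR P^\infty_{-n-k-2}$, with nonzero mod $2$ homology image. Thus there is a map $S^{-n-2}\to\bbR P^{-n-2}_{-n-k-2}$ of degree one on the top cell, i.e.\ that top cell splits off. Using James periodicity as in the proof of \cref{prop:RPattachingmap}(2), pick $2^N$ large and shift to $\bbR P^{2^N-n-2}_{2^N-n-k-2}$. Spanier--Whitehead (Atiyah) duality converts a top-cell splitting into a bottom-cell coreducibility for $\bbR P^{m+k}_{m}$ with $m\equiv n+1 \pmod{2^N}$. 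Adams' theorem \cite{adams1962vector} says this occurs if and only if $k<\psi(m)=\psi(n+1)$, which is exactly the existence of $k$ vector fields on $S^n$.

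The main obstacle is the first step: rigorously extracting a map of stunted projective spectra from the equivariant map, with correct indexing, rather than just invoking the spectral-sequence correspondence. I would first try to do this geometrically. Mimicking the orbit computation in \cref{thm:unstablegeoconstr}, I would take the map on unit spheres $S((N+n+k+2)\sigma)\to S(a+N\sigma+n+2)$ restricted to the free part, pass to $C_2$-orbits to get a map of truncated projective spaces or Thom spaces, and check via the $H\underline{\bbF_2}$ computation on orbit spaces that it is degree one on the relevant cell. If the bookkeeping becomes messy, I would fall back on \cref{d(NC)}(2) together with \cref{BorelandRPAdams}. These say that a surviving filtration-$0$ class detects an element of $\pi_*\bbR P^\infty_{-w}$ represented by $1[s-w]$, so that cell splits, and the duality argument above finishes the proof.
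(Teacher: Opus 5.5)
Your proof goes through, but only via what you call the fallback, and that is a genuinely different route from the paper's. Without the isotropy-separation discussion, which you never actually use, your argument runs as follows. The Hurewicz image is the $0$-line of $E_\infty$, so $\frac{\theta}{\rho^k\tau^n}$ is a permanent cycle. By \cref{prop:1[s-w]}, so is $1[-n-2]$ in the Adams spectral sequence of $\bbR P^\infty_{-n-k-2}$. Hence some map $S^{-n-2}\to\bbR P^{-n-2}_{-n-k-2}$ has odd degree on the top cell, and \cref{prop:RPattachingmap}(2) forces $k\le\psi(n+1)-1$; you do not need the duality step. This is essentially the contrapositive of the paper's first proof in \cref{homotopyHurewiczim}. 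It therefore depends on Ma's Borel/genuine comparison and on Lin's theorem, and it is not an independent proof of \cref{thmA}.

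The paper's proof of this theorem is the geometric one you were reaching for. It avoids your ``main obstacle'' by desuspending only by trivial representations. Hauschild's theorem realizes the class by $F\colon S^{(n+k+2)\sigma+\delta}\to S^{n+2+\delta}$ with $\delta=\max\{0,k-n\}$. This map is detected on the integer-graded group $H^{n+2+\delta}(-;\underline{\bbF_2})$, which for constant coefficients is mod $2$ cohomology of orbit spaces. The orbit map $\Sigma^{1+\delta}\bbR P^{n+k+1}\to S^{n+2+\delta}$ is null on $\Sigma^{1+\delta}\bbR P^{n}$ for dimensional reasons. So it factors through $\Sigma^{1+\delta}\bbR P^{n+k+1}_{n+1}$ as a coreduction, and James--Atiyah finish.

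Your own geometric sketch would not work as written, for two reasons. Suspending by $a+N\sigma$ gives the target a nontrivial action, so the detecting degree is no longer an integer and the orbit-space identification is lost. Also, a general unstable representative is not the suspension of a map of unit spheres, so restricting to the free part of unit spheres is not available.

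The paper's route buys independence from the spectral-sequence machinery and makes the link to James' criterion literal, as a converse to \cref{thm:unstablegeoconstr}. Yours is shorter given the paper's earlier results, but it is logically the same as the first proof.
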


\begin{proof}
    First, observe that $\frac{\theta}{\rho^k\tau^n}\in\pi_{k,n+k+2}S_{C_2}$. By Hauschild's equivariant suspension theorem \cite[Theorem~II.4]{hauschild1977aquivariante}, $\Sigma^\infty_+$ induces an isomorphism
    \[[S^{(n+k+2)\sigma+\delta},S^{n+2+\delta}]^{C_2}\xrightarrow{\cong}\pi_{k,n+k+2}S_{C_2},\]
    where $\delta=\max\{0,k-n\}$. Consequently, there exists an unstable equivariant map
    \[F:S^{(n+k+2)\sigma+\delta}\to S^{n+2+\delta}\]
    that induces a nontrivial homomorphism on $H^{n+2+\delta}(-;\underline{\bbF_2})$. Passing to orbit spaces, $F$ induces a map
    \[\Sigma^{1+\delta}\bbR P^{n+k+1}\to S^{n+2+\delta}\]
    which remains nontrivial on $H^{n+2+\delta}(-;\bbF_2)$. Furthermore, since any map from $\Sigma^{1+\delta}\bbR P^n$ to $S^{n+2+\delta}$ is nullhomotopic, this map factors through the quotient, yielding
    \[\Sigma^{1+\delta}\bbR P_{n+1}^{n+k+1}\to S^{n+2+\delta}.\]
    This implies that the bottom cell of the stunted projective spectrum $\bbR P_{n+1}^{n+k+1}$ splits. Finally, by \cite{james1959stiefel} and \cite{atiyah1961thom}, the splitting of the bottom cell of $\bbR P_{n+1}^{n+k+1}$ implies that $S^n$ admits $k$ linearly independent vector fields.
\end{proof}

\section{The Hurewicz image of \texorpdfstring{$H\underline{\bbZ}$}{HZ} and \texorpdfstring{$H\underline{A}$}{HA}} \label{sec:HZHA} 
We determine the Hurewicz image of $H\underline{\bbZ}$ and $H\underline{A}$ in this section. Recall the following from \cite{dugger2005atiyah, lewis1987roggraded}.
\begin{prop}
~\begin{enumerate}
    \item The $RO(C_2)$-graded abelian group structure of $H\underline{\bbZ}_{*, *}$ is
    \[H\underline{\bbZ}_{s,w}=\begin{cases}
        \bbZ \quad &\mathrm{if}\ s=0\ \mathrm{and}\ w\ \mathrm{is\ even},\\
        \bbF_2 \quad &\mathrm{if}\ s<0 \ \mathrm{and}\ s-w=2k\ \mathrm{for\ } k\geq 0,\\
        \bbF_2 \quad &\mathrm{if}\ s\geq0 \ \mathrm{and}\ s-w=2k-1\ \mathrm{for\ } k\leq -1,\\
        0\quad &\mathrm{otherwise.}
    \end{cases}\]
    \item The $RO(C_2)$-graded abelian group structure of $H\underline{A}_{*, *}$ is
    \[H\underline{A}_{s,w}=\begin{cases}
        A(C_2)\quad &\mathrm{if}\ s=w=0,\\
        \bbZ \quad &\mathrm{if}\ s=0\ \mathrm{and}\ w=2k\ \mathrm{for\ } k\neq0,\\
        \bbZ \quad &\mathrm{if}\ s=w\neq 0,\\
        \bbF_2 \quad &\mathrm{if}\ s<0 \ \mathrm{and}\ s-w=2k\ \mathrm{for\ } k\geq 1,\\
        \bbF_2 \quad &\mathrm{if}\ s\geq0 \ \mathrm{and}\ s-w=2k-1\ \mathrm{for\ } k\leq -1,\\
        0\quad &\mathrm{otherwise. \rlap{\hspace{17.7em} \tqed}}
    \end{cases}\]
\end{enumerate}
\end{prop}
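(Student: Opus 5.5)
Both computations are classical (\cite{dugger2005atiyah, lewis1987roggraded}), and I would reprove them uniformly with equivariant cellular (co)chains of Mackey functors. For $\underline{M}\in\{\underline{\bbZ},\underline{A}\}$, write the bidegree $(s,w)$ as $(s-w)+w\sigma$, so that
\[\pi^{C_2}_{s,w}H\underline{M}\cong \pi^{C_2}_{s-w}\bigl(S^{-w\sigma}\wedge H\underline{M}\bigr).\]
This group is the Bredon homology $\tilde H^{C_2}_{s-w}(S^{n\sigma};\underline{M})$ when $w=-n\le 0$. It is the Bredon cohomology $\tilde H_{C_2}^{w-s}(S^{w\sigma};\underline{M})$ when $w>0$.

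Next I would fix the standard $C_2$-CW structure on $S^{n\sigma}$. It has one fixed $0$-cell (the point at infinity, or the origin, depending on the basepoint convention) and one free cell $C_2\times e^i$ in each dimension $1\le i\le n$. The reduced cellular chain complex is then
\[\underline{M}(C_2/C_2)\xleftarrow{\ \mathrm{tr}\ }\underline{M}(C_2/e)\xleftarrow{1-\gamma}\underline{M}(C_2/e)\xleftarrow{1+\gamma}\underline{M}(C_2/e)\xleftarrow{1-\gamma}\cdots,\]
with $\underline{M}(C_2/C_2)$ in degree $0$ and the maps $1\pm\gamma$ alternating, up to a sign convention. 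The cochain complex for $\tilde H^*_{C_2}(S^{n\sigma};\underline{M})$ is the analogous one, with the restriction in place of the transfer. In both cases $\underline{M}(C_2/e)=\bbZ$ with trivial Weyl action, so $1-\gamma=0$ and $1+\gamma=2$.

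For $\underline{\bbZ}$ the transfer is multiplication by $2$ and the restriction is the identity. Taking homology for $n\ge 0$ gives the following.
\begin{itemize}
\item A $\bbZ$ in degree $0$ when $n$ is even, which gives the case $s=0$, $w\le 0$ even.
\item Copies of $\bbF_2$ in the appropriate alternating degrees.
\item A top class $\bbZ$ in degree $n$ when $n$ is even (it lies in $\ker(1-\gamma)$ with no incoming map), which gives the case $s=0$.
\end{itemize}
The cochain version for $w>0$ produces $\bbZ$ at $s=0$ for $w$ even, and the $\bbF_2$'s in the range $s\ge 0$, $s-w$ odd $\le -3$. This region is exactly the image of the negative cone.

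For $\underline{A}$ the same complex applies with $\underline{A}(C_2/C_2)=A(C_2)=\bbZ\{1,[C_2]\}$, transfer $1\mapsto[C_2]$, and restriction $1\mapsto 1$, $[C_2]\mapsto 2$. The degree-$0$ group then becomes $A(C_2)/\langle[C_2]\rangle\cong\bbZ$ for $n>0$; this is the extra $\bbZ$ on the line $s=w$, detected via $a_\sigma$ and the geometric fixed points. The restriction kernel in the cochain complex contributes the remaining $\bbZ$'s. At $n=0$ we recover $A(C_2)$ itself. The higher degrees behave as in the $\underline{\bbZ}$ case, except that the lowest torsion class near the diagonal is absorbed. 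This accounts for the condition $k\ge 1$ rather than $k\ge 0$ in the $\bbF_2$ row for $s<0$.

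The main obstacle is bookkeeping rather than ideas. I need to fix the sign and orientation conventions for $1\pm\gamma$ correctly for each parity of $n$, and then translate cleanly between the chain degree $s-w$ and the stated cases. The boundary cases need separate checks: the diagonal $s=w$, the column $s=0$, and $n=0,1$. I would verify these directly, and cross-check the result against the known $\pi_\star H\underline{\bbF_2}$ via the universal coefficient/Bockstein sequence for $\underline{\bbZ}\xrightarrow{2}\underline{\bbZ}\to\underline{\bbF_2}$.
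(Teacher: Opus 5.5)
Your argument is correct, but it does not follow the paper, because the paper gives no proof of this proposition. It quotes the additive structure from \cite{dugger2005atiyah, lewis1987roggraded}, and later quotes the ring structure from \cite[Theorem 2.8]{dugger2005atiyah} and \cite[Theorem 7.7]{sikora2022on}. Your Bredon cellular (co)chain computation on $S^{n\sigma}$ is a sound, self-contained replacement, and it proves exactly the additive statement claimed. The citations buy something extra: the multiplicative relations the paper uses in the proof of \cref{HurewiczofHZHA}, e.g.\ $\tau^{2k}\cdot\frac{\theta}{\tau^{2k-2}}=2$ and $\eta\rho=2-[C_2]$. Cellular chains alone do not give these without further work.

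The sign bookkeeping you flag as the main obstacle is harmless. Nonequivariantly $S^{n\sigma}\simeq S^n$, and the first differential of the underlying reduced $\bbZ[C_2]$-complex is $\pm$ the augmentation. Exactness below the top degree then forces $d_2=\pm(1-\gamma)$, $d_3=\pm(1+\gamma)$, and so on. This pattern is independent of the parity of $n$, and signs do not affect homology.

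There is one real slip. Your first bullet for $\underline{\bbZ}$ is wrong, and it is inconsistent with your own $\underline{A}$ paragraph. Chain degree $j$ corresponds to $s=j-n$ in homology and to $s=w-j$ in cohomology. So chain degree $0$ is the diagonal $s=w$, not the column $s=0$. For $n\ge 1$ it gives $\bbZ/\mathrm{tr}=\bbZ/2$ (the class $\rho^n$) for every $n$, not a $\bbZ$ for even $n$. The $\bbZ$ at $s=0$, $w\le 0$ even, comes only from the top cell, as your third bullet says. Done carefully, for $n,w\ge 1$ one gets
\[
\tilde H_j(S^{n\sigma};\underline{\bbZ})=\begin{cases}\bbZ/2 & j\ \text{even},\ 0\le j<n,\\ \bbZ & j=n\ \text{even},\\ 0 & \text{otherwise},\end{cases}
\qquad
\tilde H^j(S^{w\sigma};\underline{\bbZ})=\begin{cases}\bbZ & j=w\ \text{even},\\ \bbZ/2 & j\ \text{odd},\ 3\le j\le w,\\ 0 & \text{otherwise}.\end{cases}
\]
In the cohomology computation, $H^0=\ker(\mathrm{res})=0$ and $H^1=\bbZ/\mathrm{im}(\mathrm{res})=0$.

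For $\underline{A}$, only chain degree $0$ changes. In homology it becomes $A(C_2)/\langle[C_2]\rangle\cong\bbZ$, and in cohomology it becomes $\ker(\mathrm{res})=\bbZ\{2-[C_2]\}$. These are exactly the diagonal $\bbZ$'s for $s=w<0$ and $s=w>0$. They also explain why the $\bbF_2$-row for $s<0$ starts at $k\ge 1$ rather than $k\ge 0$. With this correction your computation reproduces both tables exactly.
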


\begin{figure}
        \centering
        \begin{tikzpicture}[scale=0.5]
\draw[thick,dashed, ->] (-6,0) -- (6,0);
\draw[thick,dashed, ->] (0,-6) -- (0, 7.5);

\node at (-5, 7){$\bullet= \bbF_2$};
\node at (-5, 6){$\square= \bbZ$};
\node at (-5, 5){$\blacksquare= A(C_2)$};

\node at (6.5,0){s};
\node at (0, 8){w};

\node at (-0.6, 0.3)[scale=0.75]{0};
\node at (-0.6, 2)[scale=0.75]{2};
\node at (-0.6, 4)[scale=0.75]{4};
\node at (-0.6, 6)[scale=0.75]{6};
\node at (0.6, -2)[scale=0.75]{-2};
\node at (0.6, -4)[scale=0.75]{-4};

\node at (2, -0.4)[scale=0.75]{2};
\node at (4, -0.4)[scale=0.75]{4};
\node at (-2, 0.4)[scale=0.75]{-2};
\node at (-4, 0.4)[scale=0.75]{-4};

\node at (0,2){$\square$};
\node at (0.4, 1.7)[scale=0.75]{$\theta$};

\node at (0,3){$\bullet$};
\node at (1,4){$\bullet$};
\node at (2,5){$\bullet$};
\node at (3,6){$\bullet$};
\node at (4,7){$\bullet$};

\node at (0,4){$\square$};

\node at (0,5){$\bullet$};
\node at (1,6){$\bullet$};
\node at (2,7){$\bullet$};

\node at (0,6){$\square$};

\node at (0,7){$\bullet$};

\node at (0.4, 2.7)[scale=0.75]{$\frac{\theta}{\tau}$};
\node at (0.4, 3.7)[scale=0.75]{$\frac{\theta}{\tau^2}$};
\node at (0.4, 4.7)[scale=0.75]{$\frac{\theta}{\tau^3}$};
\node at (0.4, 5.7)[scale=0.75]{$\frac{\theta}{\tau^4}$};
\node at (0.4, 6.7)[scale=0.75]{$\frac{\theta}{\tau^5}$};

\node at (0,0){$\square$};
\node at (-1,-1){$\bullet$};
\node at (-2,-2){$\bullet$};
\node at (-3,-3){$\bullet$};
\node at (-4,-4){$\bullet$};
\node at (-5,-5){$\bullet$};

\node at (-1.3, -0.7)[scale=0.75]{$\rho$};
\node at (-2.3, -1.7)[scale=0.75]{$\rho^2$};
\node at (-3.3, -2.7)[scale=0.75]{$\rho^3$};
\node at (-4.3, -3.7)[scale=0.75]{$\rho^4$};
\node at (-5.3, -4.7)[scale=0.75]{$\rho^5$};

\node at (0,-2){$\square$};
\node at (-1,-3){$\bullet$};
\node at (-2,-4){$\bullet$};
\node at (-3,-5){$\bullet$};

\node at (0,-4){$\square$};
\node at (-1,-5){$\bullet$};

\node at (-0.4, -1.7)[scale=0.75]{$\tau^2$};
\node at (-0.4, -3.7)[scale=0.75]{$\tau^4$};

\draw[thick,dashed, ->] (9,0) -- (21,0);
\draw[thick,dashed, ->] (15,-6) -- (15, 7.5);

\node at (21.5,0){s};
\node at (15, 8){w};

\node at (14.4, 0.3)[scale=0.75]{0};
\node at (14.4, 2)[scale=0.75]{2};
\node at (14.4, 4)[scale=0.75]{4};
\node at (14.4, 6)[scale=0.75]{6};
\node at (15.6, -2)[scale=0.75]{-2};
\node at (15.6, -4)[scale=0.75]{-4};

\node at (17, -0.4)[scale=0.75]{2};
\node at (19, -0.4)[scale=0.75]{4};
\node at (13, 0.4)[scale=0.75]{-2};
\node at (11, 0.4)[scale=0.75]{-4};

\node at (15,2){$\square$};
\node at (15.4, 1.7)[scale=0.75]{$\theta$};

\node at (15,3){$\bullet$};
\node at (16,4){$\bullet$};
\node at (17,5){$\bullet$};
\node at (18,6){$\bullet$};
\node at (19,7){$\bullet$};

\node at (15,4){$\square$};

\node at (15,5){$\bullet$};
\node at (16,6){$\bullet$};
\node at (17,7){$\bullet$};

\node at (15,6){$\square$};

\node at (15,7){$\bullet$};

\node at (15.4, 2.7)[scale=0.75]{$\frac{\theta}{\tau}$};
\node at (15.4, 3.7)[scale=0.75]{$\frac{\theta}{\tau^2}$};
\node at (15.4, 4.7)[scale=0.75]{$\frac{\theta}{\tau^3}$};
\node at (15.4, 5.7)[scale=0.75]{$\frac{\theta}{\tau^4}$};
\node at (15.4, 6.7)[scale=0.75]{$\frac{\theta}{\tau^5}$};

\node at (15,0){$\blacksquare$};
\node at (14,-1){$\square$};
\node at (13,-2){$\square$};
\node at (12,-3){$\square$};
\node at (11,-4){$\square$};
\node at (10,-5){$\square$};

\node at (16,1){$\square$};
\node at (17,2){$\square$};
\node at (18,3){$\square$};
\node at (19,4){$\square$};
\node at (20,5){$\square$};

\node at (13.6, -0.7)[scale=0.75]{$\rho$};
\node at (12.6, -1.7)[scale=0.75]{$\rho^2$};
\node at (11.6, -2.7)[scale=0.75]{$\rho^3$};
\node at (10.6, -3.7)[scale=0.75]{$\rho^4$};
\node at (9.6, -4.7)[scale=0.75]{$\rho^5$};

\node at (16.6, 0.7)[scale=0.75]{$\eta$};
\node at (17.6, 1.7)[scale=0.75]{$\frac{\eta}{\rho}$};
\node at (18.6, 2.7)[scale=0.75]{$\frac{\eta}{\rho^2}$};
\node at (19.6, 3.7)[scale=0.75]{$\frac{\eta}{\rho^3}$};
\node at (20.6, 4.7)[scale=0.75]{$\frac{\eta}{\rho^4}$};

\node at (15,-2){$\square$};
\node at (14,-3){$\bullet$};
\node at (13,-4){$\bullet$};
\node at (12,-5){$\bullet$};

\node at (15,-4){$\square$};
\node at (14,-5){$\bullet$};

\node at (14.6, -1.7)[scale=0.75]{$\tau^2$};

\node at (14.6, -3.7)[scale=0.75]{$\tau^4$};

\end{tikzpicture}
        \caption{$H\underline{\mathbb Z}_{s, w}$ and $H\underline{A}_{s, w}$}
        \label{HZHAfigure}
    \end{figure}

The additive structures of $H\underline{\bbZ}_{*, *}$ and $H\underline{A}_{*, *}$ are depicted in \cref{HZHAfigure}, where each black dot represents $\bbF_2$, each hollow square represents $\bbZ$, and the black square represents $A(C_2)$. References for multiplicative structures are \cite[Theorem 2.8]{dugger2005atiyah} and \cite[Theorem 7.7]{sikora2022on}.

Since the Hurewicz map $\iota_{H\underline{\bbF_2}}: S^0 \to H\underline{\bbF_2}$ factors through
\[S^0\longrightarrow H\underline{A} \to H\underline{\bbZ}\to H\underline{\bbF_2},\]
by abuse of notation, we denote the generators of $H\underline{A}_{*, *}$ and $H\underline{\bbZ}_{*, *}$ by the same names as their images in $H\underline{\bbF_2}_{*, *}$ and the $\bbF_2[\rho]$-module structure is inherited similarly. For $H\underline{A}$, $H\underline{A}_{0,0}=A(C_2)=\bbZ\{1, [C_2]\}$. There is an additional generator $\eta\in H\underline{A}_{1,1}$, which is the Hurewicz image of the non-nilpotent element $\eta_{C_2}\in \pi_{1,1}^{C_2}$ \cite{belmontxuzhang2024reduced}, since both satisfy the multiplicative relation $\eta \cdot \rho = 2-[C_2]$.  The notation $\frac{\eta}{\rho^j}$ suggests that multiplication by $\rho$ induces isomorphisms  \[\cdot \rho: \pi_{n+1,n+1}H\underline{A} \to \pi_{n,n}H\underline{A}, \qquad n\geq 1.\]

\vspace{0.2cm}

\begin{theorem}[\cref{mainHurewiczofHZHA}]\label{HurewiczofHZHA}
~\begin{enumerate}
    \item The $RO(C_2)$-graded Hurewicz image of $H\underline{\bbZ}$ consists of
    \[\im (\iota_{H\underline{\bbZ}} )_{s, w}=\begin{cases}
        \bbZ\{1\} \quad &\mathrm{if}\ s=w=0,\\
        \bbZ\{\frac{\theta}{\tau^{2k-2}}\} \quad &\mathrm{if}\ s=0\ \mathrm{and}\ w=2k\ \mathrm{for}\ k > 0,\\
        \bbZ\{2\tau^{2k}\} \quad &\mathrm{if}\ s=0\ \mathrm{and}\ w=-2k\ \mathrm{for}\ k>0,\\
        \bbF_2\{\rho^{-s}\} \quad &\mathrm{if}\ s=w<0,\\
        \bbF_2\{\frac{\theta}{\rho^s\tau^{w-s-2}}\} \quad &\mathrm{if}\ 0\leq s< \psi(w-s-1) \ \mathrm{and}\ s-w=2k-1\ \mathrm{for\ } k\leq -1,\\
        0\quad &\mathrm{otherwise.}
    \end{cases}\]
    \item The $RO(C_2)$-graded Hurewicz image of $H\underline{A}$ consists of
    \[\im (\iota_{H\underline{A}} )_{s, w}=\begin{cases}
        A(C_2)\quad &\mathrm{if}\ s=w=0,\\
        \bbZ\{\frac{\theta}{\tau^{2k-2}}\} \quad &\mathrm{if}\ s=0\ \mathrm{and}\ w=2k\ \mathrm{for}\ k > 0,\\
        \bbZ\{2\tau^{2k}\} \quad &\mathrm{if}\ s=0\ \mathrm{and}\ w=-2k\ \mathrm{for}\ k>0,\\
        \bbZ\{\rho^{-s}\} \quad &\mathrm{if}\ s=w<0,\\
        \bbZ\{2^{s-n(s)-1}\frac{\eta}{\rho^{s-1}}\} \quad &\mathrm{if}\ s=w>0,\\
        \bbF_2\{\frac{\theta}{\rho^s\tau^{w-s-2}}\} \quad &\mathrm{if}\ 0\leq s< \psi(w-s-1) \ \mathrm{and}\ s-w=2k-1\ \mathrm{for\ } k\leq -1,\\
        0\quad &\mathrm{otherwise.}
    \end{cases}\]
    where 
    \[n(s) = \begin{cases} 
    4t-1 & s = 8t,
    \\4t & s = 8t+j \text{ for } j = 1,2,3,4,
    \\4t+1 & s = 8t+5,
    \\4t+2 & s = 8t+6,
    \\4t+3 & s = 8t+7.
    \end{cases}\]
\end{enumerate}
\end{theorem}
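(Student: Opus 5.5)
The plan is to work degree by degree through the nonzero groups of $H\underline{\bbZ}_{*,*}$ and $H\underline{A}_{*,*}$ listed in the preceding proposition. We use the factorization $S^0\to H\underline{A}\to H\underline{\bbZ}\to H\underline{\bbF_2}$ together with \cref{homotopyHurewiczim} for the torsion classes, and the computation of $\pi^{C_2}_{*,*}S^0$ along $s=0$ and $s=w$ from \cite{belmontxuzhang2024reduced} for the integral classes.

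\textbf{Torsion classes.} In the region $s\ge 0$, $s-w=2k-1$ with $k\le -1$, the group $H\underline{\bbZ}_{s,w}\cong H\underline{A}_{s,w}\cong\bbF_2$ maps isomorphically onto $\bbF_2\{\frac{\theta}{\rho^s\tau^{w-s-2}}\}\subset H\underline{\bbF_2}_{s,w}$. I would check this by the Bockstein/long exact sequence for $2$ and the fact that $w-s-2$ is odd there. Granting it, a class lies in the $H\underline{\bbZ}$ (or $H\underline{A}$) Hurewicz image if and only if its image lies in that of $H\underline{\bbF_2}$, so \cref{homotopyHurewiczim} gives the condition $s<\psi(w-s-1)$ immediately. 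The even-coweight negative cone classes $\frac{\theta}{\rho^s\tau^{w-s-2}}$ with $w-s-2$ even have no integral lift with $s>0$, so they contribute nothing; for $s=0$ they are the $\bbZ$ summands treated below.

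In the region $s<0$, $s-w=2k\ge 0$, the classes are $\rho^{-s}\tau^{2k}$. For $k=0$, the element $\rho^{-s}$ is the image of $a_\sigma^{-s}$. For $k>0$, the image in $H\underline{\bbF_2}$ is $\rho^{-s}\tau^{2k}$, which is not in the $H\underline{\bbF_2}$ image by \cref{homotopyHurewiczim}. Since the map $H\underline{\bbZ}_{s,w}\to H\underline{\bbF_2}_{s,w}$ is injective on these $\bbF_2$'s, the class is also not in the $H\underline{\bbZ}$ or $H\underline{A}$ image. For $H\underline{A}$ the group on $s=w<0$ is $\bbZ$, generated by $\rho^{-s}$, which is the image of $a_\sigma^{-s}$.

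\textbf{Integral classes.} On the line $s=0$, the source $\pi^{C_2}_{0,w}S^0=[S^{w\sigma},S^{w}]^{C_2}$ is computed by tom Dieck splitting / degrees: it is determined by the pair of degrees (underlying degree, fixed-point degree), subject to a congruence mod $2$. Under the Hurewicz map to $H\underline{\bbZ}_{0,w}\cong\bbZ$ only the underlying degree survives. Because of the Borsuk--Ulam constraint, for $w=-2k<0$ the map $S^{2k\sigma}\leftarrow$ must have even underlying degree, which gives $2\tau^{2k}$. For $w=2k>0$ the geometric map of \cref{thm:unstablegeoconstr} with $n=2k-2$, $k'=0$ (the Hopf-type quadratic map) realizes the generator $\frac{\theta}{\tau^{2k-2}}$. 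I would verify the generator of $H\underline{\bbZ}_{0,2k}$ restricts to the orbit-space degree computed there.

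On the line $s=w>0$ for $H\underline{A}$, the source contains $\eta_{C_2}^s$ together with the image-of-$J$-type classes of \cite{belmontxuzhang2024reduced}. Writing $\eta^s=2^{s-1}\frac{\eta}{\rho^{s-1}}$ up to the relation $\eta\rho=2-[C_2]$, the maximal $2$-divisibility of the image is governed by $n(s)$. This is exactly the content of the $\bbZ$-classes computed in \cite{belmontxuzhang2024reduced}, whose $\rho$-localization/fixed points relate to $\pi_*J$ of $\bbR P$-type spectra. The value $s=w=0$ is the Burnside ring, where the Hurewicz map is an isomorphism.

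\textbf{Main obstacle.} The main obstacle is the $s=w>0$ divisibility $2^{s-n(s)-1}$. My plan there is to quote the reduced-Burnside computation of \cite{belmontxuzhang2024reduced} and match generators via the known Hurewicz image of $\eta_{C_2}$, rather than reprove it.
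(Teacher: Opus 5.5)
\textbf{Overall.} Your proposal is correct in outline. For the torsion classes and for $s=w>0$ it follows the paper; on the line $s=0$ it takes a different route. It needs one added step (realizing underlying degree $2$ for $w<0$) and one inaccurate claim removed.

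\textbf{Where you match the paper.}

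For the torsion classes, you reduce to \cref{homotopyHurewiczim} through the factorization $S^0\to H\underline{A}\to H\underline{\bbZ}\to H\underline{\bbF_2}$. You make explicit that $H\underline{\bbZ}_{s,w}\to H\underline{\bbF_2}_{s,w}$ is injective on the $\bbF_2$'s. This is immediate from the long exact sequence for multiplication by $2$, and the paper uses it only implicitly.

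For $s=w>0$, you quote the torsion-free generator $\eta_{C_2}^s/2^{n(s)}$ from \cite{belmontxuzhang2024reduced}. Your identity $\eta^s=2^{s-1}\frac{\eta}{\rho^{s-1}}$ in $H\underline{A}$ is equivalent to the paper's multiplication by $a_\sigma^{s-1}$. Deriving it from $\eta\rho=2-[C_2]$ also needs $[C_2]\cdot\eta=0$, which holds because $\eta$ restricts to $\pi_1H\bbZ=0$.

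\textbf{Where you differ: the line $s=0$.}

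The paper's argument there:
\begin{itemize}
\item it uses the generators $\omega_{\pm k}\in\pi^{C_2}_{0,\pm 2k}$ from \cite{belmontxuzhang2024reduced} and the relation $\omega_k\omega_{-k}=2[C_2]$;
\item it combines these with $\tau^{2k}\cdot\frac{\theta}{\tau^{2k-2}}=2$ and $[C_2]\mapsto 2$.
\end{itemize}

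Your argument instead:
\begin{itemize}
\item uses that restriction $H\underline{\bbZ}_{0,w}\to\pi_0H\bbZ=\bbZ$ is injective, sending $\tau^{2k}\mapsto\pm1$ and $\frac{\theta}{\tau^{2k-2}}\mapsto\pm2$, so the image is controlled by underlying degrees;
\item for $w=2k>0$, realizes a generator by the quadratic map of \cref{thm:unstablegeoconstr} with no vector fields. Integrally its orbit map $\bbR P^{2k-1}\to S^{2k-1}$ is the top-cell collapse, an isomorphism on $H^{2k-1}(-;\bbZ)$ because $2k-1$ is odd. So the image is exactly $\pm\frac{\theta}{\tau^{2k-2}}$, not merely nonzero mod $2$.
\end{itemize}

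Trade-off: your route is more self-contained and gives explicit unstable representatives, but it does not identify the named generators of \cite{belmontxuzhang2024reduced}. The paper's route is shorter once that reference is granted.

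\textbf{Two repairs.}

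First, for $w=-2k<0$ you only show that every underlying degree is even. That gives the inclusion $\subseteq\bbZ\{2\tau^{2k}\}$. You also need an element of underlying degree exactly $2$. The transfer $\mathrm{tr}_e^{C_2}(1)\in\pi^{C_2}_{0,-2k}$ works, since its restriction is $1+(-1)^{2k}=2$. The parity also follows more cleanly from \cref{homotopyHurewiczim} than from a Borsuk--Ulam argument. An odd degree would give $H\underline{\bbF_2}$-Hurewicz image $\tau^{2k}\neq 0$ in degree $(0,-2k)$, and \cref{homotopyHurewiczim} rules that out.

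Second, drop the claim that $\pi^{C_2}_{0,w}$ is determined by the pair (underlying degree, fixed-point degree) subject to a congruence. For $w\neq 0$ the fixed-point dimensions of source and target differ, so there is no fixed-point degree in these gradings. The claim is not needed: only the injectivity of restriction on $H\underline{\bbZ}_{0,w}$ enters.
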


\begin{figure}
        \centering
        \begin{tikzpicture}[scale=0.5]
\draw[thick,dashed, ->] (-6,0) -- (6,0);
\draw[thick,dashed, ->] (0,-6) -- (0, 7.5);

\node at (-5, 7){$\color{blue}{\bullet}\ \&\ \color{black}\circ = \bbF_2$};
\node at (-5, 6){$\color{blue}\square\color{black}= \mathbb Z$};
\node at (-5, 5){$\color{blue}\blacksquare \color{black}= A(C_2)$};

\node at (6.5,0){s};
\node at (0, 8){w};

\node at (-0.6, 0.3)[scale=0.75]{0};
\node at (-0.6, 2)[scale=0.75]{2};
\node at (-0.6, 4)[scale=0.75]{4};
\node at (-0.6, 6)[scale=0.75]{6};
\node at (0.6, -2)[scale=0.75]{-2};
\node at (0.6, -4)[scale=0.75]{-4};

\node at (2, -0.4)[scale=0.75]{2};
\node at (4, -0.4)[scale=0.75]{4};
\node at (-2, 0.4)[scale=0.75]{-2};
\node at (-4, 0.4)[scale=0.75]{-4};

\node at (0,2){$\color{blue}\square$};
\node at (0.4, 1.7)[scale=0.75]{$\theta$};

\node at (0,3){$\color{blue}\bullet$};
\node at (1,4){$\color{blue}\bullet$};
\node at (2,5){$\color{gray}\circ$};
\node at (3,6){$\color{gray}\circ$};
\node at (4,7){$\color{gray}\circ$};

\node at (0,4){$\color{blue}\square$};

\node at (0,5){$\color{blue}\bullet$};
\node at (1,6){$\color{blue}\bullet$};
\node at (2,7){$\color{blue}\bullet$};

\node at (0,6){$\color{blue}\square$};

\node at (0,7){$\color{blue}\bullet$};

\node at (0.4, 2.7)[scale=0.75]{$\frac{\theta}{\tau}$};
\node at (0.4, 3.7)[scale=0.75]{$\frac{\theta}{\tau^2}$};
\node at (0.4, 4.7)[scale=0.75]{$\frac{\theta}{\tau^3}$};
\node at (0.4, 5.7)[scale=0.75]{$\frac{\theta}{\tau^4}$};
\node at (0.4, 6.7)[scale=0.75]{$\frac{\theta}{\tau^5}$};

\node at (0,0){$\color{blue}\square$};
\node at (-1,-1){$\color{blue}\bullet$};
\node at (-2,-2){$\color{blue}\bullet$};
\node at (-3,-3){$\color{blue}\bullet$};
\node at (-4,-4){$\color{blue}\bullet$};
\node at (-5,-5){$\color{blue}\bullet$};

\node at (-1.3, -0.7)[scale=0.75]{$\rho$};
\node at (-2.3, -1.7)[scale=0.75]{$\rho^2$};
\node at (-3.3, -2.7)[scale=0.75]{$\rho^3$};
\node at (-4.3, -3.7)[scale=0.75]{$\rho^4$};
\node at (-5.3, -4.7)[scale=0.75]{$\rho^5$};

\node at (0,-2){$\color{blue}\square$};
\node at (-1,-3){$\color{gray}\circ$};
\node at (-2,-4){$\color{gray}\circ$};
\node at (-3,-5){$\color{gray}\circ$};

\node at (0,-4){$\color{blue}\square$};
\node at (-1,-5){$\color{gray}\circ$};

\node at (-0.43, -1.6)[scale=0.8]{$2\tau^2$};
\node at (-0.43, -3.6)[scale=0.8]{$2\tau^4$};

\draw[thick,dashed, ->] (9,0) -- (21,0);
\draw[thick,dashed, ->] (15,-6) -- (15, 7.5);

\node at (21.5,0){s};
\node at (15, 8){w};

\node at (14.4, 0.3)[scale=0.75]{0};
\node at (14.4, 2)[scale=0.75]{2};
\node at (14.4, 4)[scale=0.75]{4};
\node at (14.4, 6)[scale=0.75]{6};
\node at (15.6, -2)[scale=0.75]{-2};
\node at (15.6, -4)[scale=0.75]{-4};

\node at (17, -0.4)[scale=0.75]{2};
\node at (19, -0.4)[scale=0.75]{4};
\node at (13, 0.4)[scale=0.75]{-2};
\node at (11, 0.4)[scale=0.75]{-4};

\node at (15,2){$\color{blue}\square$};
\node at (15.4, 1.7)[scale=0.75]{$\theta$};

\node at (15,3){$\color{blue}\bullet$};
\node at (16,4){$\color{blue}\bullet$};
\node at (17,5){$\color{gray}\circ$};
\node at (18,6){$\color{gray}\circ$};
\node at (19,7){$\color{gray}\circ$};

\node at (15,4){$\color{blue}\square$};

\node at (15,5){$\color{blue}\bullet$};
\node at (16,6){$\color{blue}\bullet$};
\node at (17,7){$\color{blue}\bullet$};

\node at (15,6){$\color{blue}\square$};

\node at (15,7){$\color{blue}\bullet$};

\node at (15.4, 2.7)[scale=0.75]{$\frac{\theta}{\tau}$};
\node at (15.4, 3.7)[scale=0.75]{$\frac{\theta}{\tau^2}$};
\node at (15.4, 4.7)[scale=0.75]{$\frac{\theta}{\tau^3}$};
\node at (15.4, 5.7)[scale=0.75]{$\frac{\theta}{\tau^4}$};
\node at (15.4, 6.7)[scale=0.75]{$\frac{\theta}{\tau^5}$};

\node at (15,0){$\color{blue}\blacksquare$};
\node at (14,-1){$\color{blue}\square$};
\node at (13,-2){$\color{blue}\square$};
\node at (12,-3){$\color{blue}\square$};
\node at (11,-4){$\color{blue}\square$};
\node at (10,-5){$\color{blue}\square$};

\node at (16,1){$\color{blue}\square$};
\node at (17,2){$\color{blue}\square$};
\node at (18,3){$\color{blue}\square$};
\node at (19,4){$\color{blue}\square$};
\node at (20,5){$\color{blue}\square$};

\node at (13.6, -0.7)[scale=0.75]{$\rho$};
\node at (12.6, -1.7)[scale=0.75]{$\rho^2$};
\node at (11.6, -2.7)[scale=0.75]{$\rho^3$};
\node at (10.6, -3.7)[scale=0.75]{$\rho^4$};
\node at (9.6, -4.7)[scale=0.75]{$\rho^5$};

\node at (16.6, 0.7)[scale=0.75]{$\eta$};
\node at (17.6, 1.7)[scale=0.75]{$\frac{2\eta}{\rho}$};
\node at (18.6, 2.7)[scale=0.75]{$\frac{4\eta}{\rho^2}$};
\node at (19.6, 3.7)[scale=0.75]{$\frac{8\eta}{\rho^3}$};
\node at (20.6, 4.7)[scale=0.75]{$\frac{8\eta}{\rho^4}$};

\node at (15,-2){$\color{blue}\square$};
\node at (14,-3){$\color{gray}\circ$};
\node at (13,-4){$\color{gray}\circ$};
\node at (12,-5){$\color{gray}\circ$};

\node at (15,-4){$\color{blue}\square$};
\node at (14,-5){$\color{gray}\circ$};

\node at (14.57, -1.6)[scale=0.8]{$2\tau^2$};

\node at (14.57, -3.6)[scale=0.8]{$2\tau^4$};

\end{tikzpicture}
        \caption{The Hurewicz image of $H\underline{\mathbb Z}_{s, w}$ and $H\underline{A}_{s, w}$}
        \label{hurewiczHZHAfigure}
    \end{figure}

\begin{proof}
~\begin{enumerate}
    \item Since the Hurewicz map $\iota_{H\underline{\bbF_2}}$ factors through $\iota_{H\underline{\bbZ}}$, \cref{homotopyHurewiczim} forces the result as claimed, with exceptional cases only in degrees $(s, w)$ for $s=0$ and $w$ even.

    If $w>0$, the generators $\omega_{-k}\in \pi_{0, 2k}^{C_2}$ that detects $\frac{\theta}{\tau^{2k-2}}$ are torsion-free \cite{belmontxuzhang2024reduced}, and since its image in $H\underline{\bbF_2}_{*, *}$ is nontrivial, it must have nontrivial image in $H\underline{\bbZ}_{*, *}$.

    If $w<0$, $\tau^{2k}\in H\underline{\bbZ}_{0, -2k}$ satisfies the multiplicative relation
    \[\tau^{2k} \cdot \frac{\theta}{\tau^{2k-2}} = 2.\]
    On the other hand, according to \cite{belmontxuzhang2024reduced}, there are torsion-free generators $\omega_{k} \in \pi_{0, -2k}^{C_2}$ satisfying
    \[\omega_{k}\cdot \omega_{-k} =2[C_2].\]
    Since $[C_2]$ has image $2$ in $H\underline{\bbZ}_{*, *}$, the image of $\omega_k$ in $H\underline{\bbZ}_{*, *}$ must be $2\tau^{2k}$ in $H\underline{\bbZ}_{*, *}$.
    
    \item Since the Hurewicz map $\iota_{H\underline{\bbZ}}$ factors through $\iota_{H\underline{A}}$, part (1) implies the result as claimed, except the cases for $s=w> 0$.

    Recall that $a_\sigma\in\pi_{-1,-1}^{C_2}S^0$ is the Euler class of $\sigma$, whose $H\underline{\bbF_2}$-Hurewicz image is $\rho$. According to \cite{belmontxuzhang2024reduced}, the torsion-free generator of $\pi_{s, s}^{C_2}$ is $\frac{\eta^s_{C_2}}{2^{n(s)}}$ subject to the relation
    \[\frac{\eta^N_{C_2}}{a_\sigma}= \frac{\eta^{N+1}_{C_2}}{2}\]
    for large enough $N$. In particular,
    \[\frac{\eta^s_{C_2}}{2^{n(s)}} \cdot a_\sigma^{s-1} =2^{s-n(s)-1}\cdot \eta_{C_2}.\]
    On the other hand, from the definition,
    \[\frac{\eta}{\rho^{s-1}} \cdot \rho^{s-1}=\eta\]
    in $H\underline{A}_{*, *}$.
    Therefore, the generators $\frac{\eta^s_{C_2}}{2^{n(s)}}$ all map injectively into $H\underline{A}_{s,s}$:
    \[\iota_{H\underline{A}}\left(\frac{\eta^s_{C_2}}{2^{n(s)}}\right)=2^{s-n(s)-1}\frac{\eta}{\rho^{s-1}}.\]
\end{enumerate}
\end{proof}

\cref{hurewiczHZHAfigure} represents the Hurewicz image of $H\underline{\bbZ}$ and $H\underline{A}$. The generators with names that are in the Hurewicz image are indicated in blue.

\begingroup
\raggedright
\bibliography{refs}

@article {adams1962vector,
    AUTHOR = {Adams, J. F.},
     TITLE = {Vector fields on spheres},
   JOURNAL = {Ann. of Math. (2)},
  FJOURNAL = {Annals of Mathematics. Second Series},
    VOLUME = {75},
      YEAR = {1962},
     PAGES = {603--632},
      ISSN = {0003-486X},
   MRCLASS = {57.30},
  MRNUMBER = {139178},
MRREVIEWER = {M. F. Atiyah},
       DOI = {10.2307/1970213},
       URL = {https://doi.org/10.2307/1970213},
}

@article {adams1966on,
    AUTHOR = {Adams, J. F.},
     TITLE = {On the groups {$J(X)$}. {IV}},
   JOURNAL = {Topology},
  FJOURNAL = {Topology. An International Journal of Mathematics},
    VOLUME = {5},
      YEAR = {1966},
     PAGES = {21--71},
      ISSN = {0040-9383},
   MRCLASS = {55.40},
  MRNUMBER = {198470},
MRREVIEWER = {E. Dyer},
       DOI = {10.1016/0040-9383(66)90004-8},
       URL = {https://doi.org/10.1016/0040-9383(66)90004-8},
}

@article {adams1960nonexistence,
    AUTHOR = {Adams, J. F.},
     TITLE = {On the non-existence of elements of {H}opf invariant one},
   JOURNAL = {Ann. of Math. (2)},
  FJOURNAL = {Annals of Mathematics. Second Series},
    VOLUME = {72},
      YEAR = {1960},
     PAGES = {20--104},
      ISSN = {0003-486X},
   MRCLASS = {55.40},
  MRNUMBER = {141119},
MRREVIEWER = {M.\ A.\ Kervaire},
       DOI = {10.2307/1970147},
       URL = {https://doi.org/10.2307/1970147},
}

@book {andrews2015v1,
    AUTHOR = {Andrews, Michael Joseph},
     TITLE = {The v1-periodic part of the {A}dams spectral sequence at an
              odd prime},
      NOTE = {Thesis (Ph.D.)--Massachusetts Institute of Technology},
 PUBLISHER = {ProQuest LLC, Ann Arbor, MI},
      YEAR = {2015},
     PAGES = {(no paging)},
   MRCLASS = {Thesis},
  MRNUMBER = {3427191},
       URL =
              {http://gateway.proquest.com/openurl?url_ver=Z39.88-2004&rft_val_fmt=info:ofi/fmt:kev:mtx:dissertation&res_dat=xri:pqm&rft_dat=xri:pqdiss:0831008},
}

@article {atiyah1961thom,
    AUTHOR = {Atiyah, M. F.},
     TITLE = {Thom complexes},
   JOURNAL = {Proc. London Math. Soc. (3)},
  FJOURNAL = {Proceedings of the London Mathematical Society. Third Series},
    VOLUME = {11},
      YEAR = {1961},
     PAGES = {291--310},
      ISSN = {0024-6115,1460-244X},
   MRCLASS = {57.30},
  MRNUMBER = {131880},
MRREVIEWER = {R.\ Bott},
       DOI = {10.1112/plms/s3-11.1.291},
       URL = {https://doi.org/10.1112/plms/s3-11.1.291},
}

@article {balderramaculverquigley2025motivic,
    AUTHOR = {Balderrama, William and Culver, Dominic Leon and Quigley, J.
              D.},
     TITLE = {The motivic lambda algebra and motivic {H}opf invariant one
              problem},
   JOURNAL = {Geom. Topol.},
  FJOURNAL = {Geometry \& Topology},
    VOLUME = {29},
      YEAR = {2025},
    NUMBER = {3},
     PAGES = {1489--1570},
      ISSN = {1465-3060,1364-0380},
   MRCLASS = {14F42 (55Q25 55Q45 55S10 55T15)},
  MRNUMBER = {4918111},
       DOI = {10.2140/gt.2025.29.1489},
       URL = {https://doi.org/10.2140/gt.2025.29.1489},
}

@Article{behrenshillhopkinsmahowald2020detecting,
 Author = {Behrens, M. and Hill, M. and Hopkins, M. J. and Mahowald, M.},
 Title = {Detecting exotic spheres in low dimensions using coker {{\(J\)}}},
 FJournal = {Journal of the London Mathematical Society. Second Series},
 Journal = {J. Lond. Math. Soc., II. Ser.},
 ISSN = {0024-6107},
 Volume = {101},
 Number = {3},
 Pages = {1173--1218},
 Year = {2020},
 Language = {English},
 DOI = {10.1112/jlms.12301},
 zbMATH = {7226678},
 Zbl = {1460.55017}
}

@article {behrensshah2020c2,
    AUTHOR = {Behrens, Mark and Shah, Jay},
     TITLE = {{$C_2$}-equivariant stable homotopy from real motivic stable
              homotopy},
   JOURNAL = {Ann. K-Theory},
  FJOURNAL = {Annals of K-Theory},
    VOLUME = {5},
      YEAR = {2020},
    NUMBER = {3},
     PAGES = {411--464},
      ISSN = {2379-1683},
   MRCLASS = {14F42 (55N91 55P91 55Q91)},
  MRNUMBER = {4132743},
       DOI = {10.2140/akt.2020.5.411},
       URL = {https://doi.org/10.2140/akt.2020.5.411},
}

@article {behrensmahowaldquigley2023hurewicz,
    AUTHOR = {Behrens, Mark and Mahowald, Mark and Quigley, J. D.},
     TITLE = {The 2-primary {H}urewicz image of tmf},
   JOURNAL = {Geom. Topol.},
  FJOURNAL = {Geometry \& Topology},
    VOLUME = {27},
      YEAR = {2023},
    NUMBER = {7},
     PAGES = {2763--2831},
      ISSN = {1465-3060},
   MRCLASS = {55Q45 (55Q51 55T15)},
  MRNUMBER = {4645486},
MRREVIEWER = {Andrew J. Baker},
       DOI = {10.2140/gt.2023.27.2763},
       URL = {https://doi.org/10.2140/gt.2023.27.2763},
}

@article {belmontxuzhang2024reduced,
    AUTHOR = {Belmont, Eva and Xu, Zhouli and Zhang, Shangjie},
     TITLE = {The reduced ring of the {$RO(C_2)$}-graded {$C_2$}-equivariant
              stable stems},
   JOURNAL = {Proc. Amer. Math. Soc. Ser. B},
  FJOURNAL = {Proceedings of the American Mathematical Society. Series B},
    VOLUME = {11},
      YEAR = {2024},
     PAGES = {1--14},
      ISSN = {2330-1511},
   MRCLASS = {55Q91 (55Q45)},
  MRNUMBER = {4685813},
       DOI = {10.1090/bproc/203},
       URL = {https://doi.org/10.1090/bproc/203},
}

@ARTICLE{belmontisaksen2020rmotivic,
    AUTHOR = {Belmont, Eva and Isaksen, Daniel C.},
     TITLE = {{$\mathbb R$}-motivic stable stems},
   JOURNAL = {J. Topol.},
  FJOURNAL = {Journal of Topology},
    VOLUME = {15},
      YEAR = {2022},
    NUMBER = {4},
     PAGES = {1755--1793},
      ISSN = {1753-8416,1753-8424},
   MRCLASS = {14F42 (55Q45 55S10 55T15)},
  MRNUMBER = {4461846},
MRREVIEWER = {Anand\ Sawant},
       DOI = {10.1112/topo.12256},
       URL = {https://doi.org/10.1112/topo.12256},
}

@ARTICLE{belmontguillouisaksen2021c2,
    AUTHOR = {Belmont, Eva and Guillou, Bertrand J. and Isaksen, Daniel C.},
     TITLE = {{$C_2$}-equivariant and {$\mathbb{R}$}-motivic stable stems {II}},
   JOURNAL = {Proc. Amer. Math. Soc.},
  FJOURNAL = {Proceedings of the American Mathematical Society},
    VOLUME = {149},
      YEAR = {2021},
    NUMBER = {1},
     PAGES = {53--61},
      ISSN = {0002-9939,1088-6826},
   MRCLASS = {55Q45 (14F42 55Q91 55T15)},
  MRNUMBER = {4172585},
MRREVIEWER = {Niall\ Taggart},
       DOI = {10.1090/proc/15167},
       URL = {https://doi.org/10.1090/proc/15167},
}

@article{bhattacharya2024new,
      title={New infinite families in the stable homotopy groups of spheres}, 
      author={Prasit Bhattacharya and Irina Bobkova and J. D. Quigley},
      journal={To appear in Geom. Topol.},
      year={2026},
      note={arXiv:2404.10062},
      eprint={2404.10062},
      archivePrefix={arXiv},
      primaryClass={math.AT},
      url={https://arxiv.org/abs/2404.10062}, 
}

@Article{bousfieldcurtiskanquillenrectorschlesinger1966modp,
 Author = {Bousfield, A. K. and Curtis, E. B. and Kan, D. M. and Quillen, D. G. and Rector, D. L. and Schlesinger, J. W.},
 Title = {The mod-{{\(p\)}} lower central series and the {Adams} spectral sequence},
 FJournal = {Topology},
 Journal = {Topology},
 ISSN = {0040-9383},
 Volume = {5},
 Pages = {331--342},
 Year = {1966},
 Language = {English},
 DOI = {10.1016/0040-9383(66)90024-3},
 zbMATH = {3254672},
 Zbl = {0158.20502}
}

@book{brunermaymccluresteinberger1986hinfinity,
  title={{$H_\infty$} Ring Spectra and Their Applications},
  author={Bruner, R.R. and May, J.P. and McClure, J.E. and Steinberger, M.},
  isbn={9780387164342},
  lccn={86003845},
  series={Lecture Notes in Mathematics},
  year={1986},
  publisher={Springer Berlin Heidelberg}
}

@article {bruner2022adams,
    AUTHOR = {Bruner, Robert R. and Rognes, John},
     TITLE = {The {A}dams spectral sequence for the image-of-{$J$} spectrum},
   JOURNAL = {Trans. Amer. Math. Soc.},
  FJOURNAL = {Transactions of the American Mathematical Society},
    VOLUME = {375},
      YEAR = {2022},
    NUMBER = {8},
     PAGES = {5803--5827},
      ISSN = {0002-9947},
   MRCLASS = {55Q50 (55T15)},
  MRNUMBER = {4469237},
MRREVIEWER = {Steven R. Costenoble},
       DOI = {10.1090/tran/8680},
       URL = {https://doi.org/10.1090/tran/8680},
}

@book {brunerrognes2021adams,
    AUTHOR = {Bruner, Robert R. and Rognes, John},
     TITLE = {The {A}dams spectral sequence for topological modular forms},
    SERIES = {Mathematical Surveys and Monographs},
    VOLUME = {253},
 PUBLISHER = {American Mathematical Society, Providence, RI},
      YEAR = {[2021] \copyright 2021},
     PAGES = {xix+690},
      ISBN = {978-1-4704-5674-0},
   MRCLASS = {55N34 (18G40 55P43 55Q45 55T15)},
  MRNUMBER = {4284897},
MRREVIEWER = {Tilman\ Bauer},
       DOI = {10.1090/surv/253},
       URL = {https://doi.org/10.1090/surv/253},
}

@article {burklundhahnsenger2023boundaries,
    AUTHOR = {Burklund, Robert and Hahn, Jeremy and Senger, Andrew},
     TITLE = {On the boundaries of highly connected, almost closed
              manifolds},
   JOURNAL = {Acta Math.},
  FJOURNAL = {Acta Mathematica},
    VOLUME = {231},
      YEAR = {2023},
    NUMBER = {2},
     PAGES = {205--344},
      ISSN = {0001-5962,1871-2509},
   MRCLASS = {55P42 (55P43 57N65)},
  MRNUMBER = {4683371},
MRREVIEWER = {Geoffrey\ M. L. Powell},
       DOI = {10.4310/acta.2023.v231.n2.a1},
       URL = {https://doi.org/10.4310/acta.2023.v231.n2.a1},
}

@article {chang2025vanishingline,
    AUTHOR = {Chang, Kevin},
     TITLE = {A {$v_1$}-banded vanishing line for the {${\rm mod}\, 2$}
              {M}oore spectrum},
   JOURNAL = {Homology Homotopy Appl.},
  FJOURNAL = {Homology, Homotopy and Applications},
    VOLUME = {27},
      YEAR = {2025},
    NUMBER = {1},
     PAGES = {29--49},
      ISSN = {1532-0073,1532-0081},
   MRCLASS = {55P42 (55Q10)},
  MRNUMBER = {4870762},
MRREVIEWER = {Tilman\ Bauer},
       DOI = {10.4310/hha.2025.v27.n1.a3},
       URL = {https://doi.org/10.4310/hha.2025.v27.n1.a3},
}

@article {cohenlinmahowald1988adams,
    AUTHOR = {Cohen, Ralph L. and Lin, W\^{e}n Hsiung and Mahowald, Mark E.},
     TITLE = {The {A}dams spectral sequence of the real projective spaces},
   JOURNAL = {Pacific J. Math.},
  FJOURNAL = {Pacific Journal of Mathematics},
    VOLUME = {134},
      YEAR = {1988},
    NUMBER = {1},
     PAGES = {27--55},
      ISSN = {0030-8730,1945-5844},
   MRCLASS = {55T15 (55P42 55Q10)},
  MRNUMBER = {953499},
MRREVIEWER = {Vincent\ Giambalvo},
       URL = {http://projecteuclid.org/euclid.pjm/1102689365},
}

@article {davismahowaldimage1989,
    AUTHOR = {Davis, Donald M. and Mahowald, Mark},
     TITLE = {The image of the stable {$J$}-homomorphism},
   JOURNAL = {Topology},
  FJOURNAL = {Topology. An International Journal of Mathematics},
    VOLUME = {28},
      YEAR = {1989},
    NUMBER = {1},
     PAGES = {39--58},
      ISSN = {0040-9383},
   MRCLASS = {55Q50 (55T15)},
  MRNUMBER = {991098},
MRREVIEWER = {Donald\ W.\ Kahn},
       DOI = {10.1016/0040-9383(89)90031-1},
       URL = {https://doi.org/10.1016/0040-9383(89)90031-1},
}

@book{douglasfrancishenriqueshill2014topological,
  title      = {Topological Modular Forms},
  editor     = {Douglas, Christopher L. and Francis, John and Henriques, Andr\'e G. and Hill, Michael A.},
  series     = {Mathematical Surveys and Monographs},
  volume     = {201},
  year       = {2014},
  publisher  = {American Mathematical Society},
  address    = {Providence, RI},
  pages      = {318},
  isbn       = {978-1-4704-1884-7},
  doi        = {10.1090/surv/201},
  mrnumber   = {3223024}
}

@article {duggerisaksen2010motivic,
    AUTHOR = {Dugger, Daniel and Isaksen, Daniel C.},
     TITLE = {The motivic {A}dams spectral sequence},
   JOURNAL = {Geom. Topol.},
  FJOURNAL = {Geometry \& Topology},
    VOLUME = {14},
      YEAR = {2010},
    NUMBER = {2},
     PAGES = {967--1014},
      ISSN = {1465-3060,1364-0380},
   MRCLASS = {55T15 (14F42 55P42)},
  MRNUMBER = {2629898},
MRREVIEWER = {Markus\ Szymik},
       DOI = {10.2140/gt.2010.14.967},
       URL = {https://doi.org/10.2140/gt.2010.14.967},
}

@article {dugger2005atiyah,
    AUTHOR = {Dugger, Daniel},
     TITLE = {An {A}tiyah-{H}irzebruch spectral sequence for {$KR$}-theory},
   JOURNAL = {$K$-Theory},
  FJOURNAL = {$K$-Theory. An Interdisciplinary Journal for the Development,
              Application, and Influence of $K$-Theory in the Mathematical
              Sciences},
    VOLUME = {35},
      YEAR = {2005},
    NUMBER = {3-4},
     PAGES = {213--256 (2006)},
      ISSN = {0920-3036,1573-0514},
   MRCLASS = {19L64 (55S45 55T25)},
  MRNUMBER = {2240234},
MRREVIEWER = {G\'{e}rald\ Gaudens},
       DOI = {10.1007/s10977-005-1552-9},
       URL = {https://doi.org/10.1007/s10977-005-1552-9},
}

@article {duggerisaksen2017low,
    AUTHOR = {Dugger, Daniel and Isaksen, Daniel C.},
     TITLE = {Low-dimensional {M}ilnor-{W}itt stems over {$\mathbb{R}$}},
   JOURNAL = {Ann. K-Theory},
  FJOURNAL = {Annals of K-Theory},
    VOLUME = {2},
      YEAR = {2017},
    NUMBER = {2},
     PAGES = {175--210},
      ISSN = {2379-1683},
   MRCLASS = {14F42 (55Q45 55S10 55T15)},
  MRNUMBER = {3590344},
MRREVIEWER = {Oliver R\"{o}ndigs},
       DOI = {10.2140/akt.2017.2.175},
       URL = {https://doi.org/10.2140/akt.2017.2.175},
}

@article {eckmann1942gruppen,
    AUTHOR = {Eckmann, Beno},
     TITLE = {Gruppentheoretischer {B}eweis des {S}atzes von
              {H}urwitz-{R}adon \"{u}ber die {K}omposition quadratischer
              {F}ormen},
   JOURNAL = {Comment. Math. Helv.},
  FJOURNAL = {Commentarii Mathematici Helvetici},
    VOLUME = {15},
      YEAR = {1943},
     PAGES = {358--366},
      ISSN = {0010-2571,1420-8946},
   MRCLASS = {09.0X},
  MRNUMBER = {9936},
MRREVIEWER = {R.\ P.\ Agnew},
       DOI = {10.1007/BF02565652},
       URL = {https://doi.org/10.1007/BF02565652},
}

@article {guillouhillisaksenravenel2020cohomology,
    AUTHOR = {Guillou, Bertrand J. and Hill, Michael A. and Isaksen, Daniel
              C. and Ravenel, Douglas Conner},
     TITLE = {The cohomology of {$C_2$}-equivariant {$\mathcal{A}(1)$} and the
              homotopy of {${\rm ko}_{C_2}$}},
   JOURNAL = {Tunis. J. Math.},
  FJOURNAL = {Tunisian Journal of Mathematics},
    VOLUME = {2},
      YEAR = {2020},
    NUMBER = {3},
     PAGES = {567--632},
      ISSN = {2576-7658},
   MRCLASS = {14F42 (55Q91 55T15)},
  MRNUMBER = {4041284},
MRREVIEWER = {Bj\o rn Ian Dundas},
       DOI = {10.2140/tunis.2020.2.567},
       URL = {https://doi.org/10.2140/tunis.2020.2.567},
}

@incollection {greenlees1988borelhomology,
    AUTHOR = {Greenlees, J. P. C.},
     TITLE = {The power of mod {$p$} {B}orel homology},
 BOOKTITLE = {Homotopy theory and related topics ({K}inosaki, 1988)},
    SERIES = {Lecture Notes in Math.},
    VOLUME = {1418},
     PAGES = {140--151},
 PUBLISHER = {Springer, Berlin},
      YEAR = {1990},
   MRCLASS = {55T15 (55N25)},
  MRNUMBER = {1048182},
MRREVIEWER = {Donald M. Davis},
       DOI = {10.1007/BFb0083699},
       URL = {https://doi.org/10.1007/BFb0083699},
}

@article {greenlees1988stable,
    AUTHOR = {Greenlees, J. P. C.},
     TITLE = {Stable maps into free {$G$}-spaces},
   JOURNAL = {Trans. Amer. Math. Soc.},
  FJOURNAL = {Transactions of the American Mathematical Society},
    VOLUME = {310},
      YEAR = {1988},
    NUMBER = {1},
     PAGES = {199--215},
      ISSN = {0002-9947,1088-6850},
   MRCLASS = {55P42 (55T15)},
  MRNUMBER = {938918},
MRREVIEWER = {Donald\ W.\ Kahn},
       DOI = {10.2307/2001117},
       URL = {https://doi.org/10.2307/2001117},
}

@article {guillouisaksen2020bredon,
    AUTHOR = {Guillou, Bertrand J. and Isaksen, Daniel C.},
     TITLE = {The {B}redon-{L}andweber region in {$C_2$}-equivariant stable
              homotopy groups},
   JOURNAL = {Doc. Math.},
  FJOURNAL = {Documenta Mathematica},
    VOLUME = {25},
      YEAR = {2020},
     PAGES = {1865--1880},
      ISSN = {1431-0635,1431-0643},
   MRCLASS = {55Q91 (14F42 55Q45 55T15)},
  MRNUMBER = {4184454},
}

@misc{guillouisaksen2024c2,
      title={{$C_2$}-Equivariant Stable Stems}, 
      author={Bertrand J. Guillou and Daniel C. Isaksen},
      year={2024},
      note={arxiv: 2404.14627},
      eprint={2404.14627},
      archivePrefix={arXiv},
      primaryClass={math.AT},
      url={https://arxiv.org/abs/2404.14627}, 
}

@article {hahnshi2020real,
    AUTHOR = {Hahn, Jeremy and Shi, XiaoLin Danny},
     TITLE = {Real orientations of {L}ubin-{T}ate spectra},
   JOURNAL = {Invent. Math.},
  FJOURNAL = {Inventiones Mathematicae},
    VOLUME = {221},
      YEAR = {2020},
    NUMBER = {3},
     PAGES = {731--776},
      ISSN = {0020-9910},
   MRCLASS = {55P43},
  MRNUMBER = {4132956},
MRREVIEWER = {Drew Heard},
       DOI = {10.1007/s00222-020-00960-z},
       URL = {https://doi.org/10.1007/s00222-020-00960-z},
}

@article {hauschild1977aquivariante,
    AUTHOR = {Hauschild, H.},
     TITLE = {\"Aquivariante {H}omotopie. {I}},
   JOURNAL = {Arch. Math. (Basel)},
  FJOURNAL = {Archiv der Mathematik},
    VOLUME = {29},
      YEAR = {1977},
    NUMBER = {2},
     PAGES = {158--165},
      ISSN = {0003-889X,1420-8938},
   MRCLASS = {57E15 (55D40)},
  MRNUMBER = {467774},
MRREVIEWER = {T.\ tom Dieck},
       DOI = {10.1007/BF01220390},
       URL = {https://doi.org/10.1007/BF01220390},
}

@article {hillhopkinsravenel2016nonexistence,
    AUTHOR = {Hill, M. A. and Hopkins, M. J. and Ravenel, D. C.},
     TITLE = {On the nonexistence of elements of {K}ervaire invariant one},
   JOURNAL = {Ann. of Math. (2)},
  FJOURNAL = {Annals of Mathematics. Second Series},
    VOLUME = {184},
      YEAR = {2016},
    NUMBER = {1},
     PAGES = {1--262},
      ISSN = {0003-486X},
   MRCLASS = {55P91 (55N22 55P42 55Q45 55T15 55U35 57R15)},
  MRNUMBER = {3505179},
MRREVIEWER = {Paul G. Goerss},
       DOI = {10.4007/annals.2016.184.1.1},
       URL = {https://doi.org/10.4007/annals.2016.184.1.1},
}

@article {hukriz2001real,
    AUTHOR = {Hu, Po and Kriz, Igor},
     TITLE = {Real-oriented homotopy theory and an analogue of the
              {A}dams-{N}ovikov spectral sequence},
   JOURNAL = {Topology},
  FJOURNAL = {Topology. An International Journal of Mathematics},
    VOLUME = {40},
      YEAR = {2001},
    NUMBER = {2},
     PAGES = {317--399},
      ISSN = {0040-9383},
   MRCLASS = {55T15 (19L47 55N22 55N91 55P42 55P91)},
  MRNUMBER = {1808224},
MRREVIEWER = {J. P. C. Greenlees},
       DOI = {10.1016/S0040-9383(99)00065-8},
       URL = {https://doi.org/10.1016/S0040-9383(99)00065-8},
}

@article {hurwitz1922komposition,
    AUTHOR = {Hurwitz, A.},
     TITLE = {\"Uber die {K}omposition der quadratischen {F}ormen},
   JOURNAL = {Math. Ann.},
  FJOURNAL = {Mathematische Annalen},
    VOLUME = {88},
      YEAR = {1922},
    NUMBER = {1-2},
     PAGES = {1--25},
      ISSN = {0025-5831,1432-1807},
   MRCLASS = {99-04},
  MRNUMBER = {1512117},
       DOI = {10.1007/BF01448439},
       URL = {https://doi.org/10.1007/BF01448439},
}

@Article{isaksenwangxu2023stable,
 Author = {Isaksen, Daniel C. and Wang, Guozhen and Xu, Zhouli},
 Title = {Stable homotopy groups of spheres: from dimension 0 to 90},
 FJournal = {Publications Math{\'e}matiques},
 Journal = {Publ. Math., Inst. Hautes {\'E}tud. Sci.},
 ISSN = {0073-8301},
 Volume = {137},
 Pages = {107--243},
 Year = {2023},
 Language = {English},
 DOI = {10.1007/s10240-023-00139-1},
 zbMATH = {7692199}
}

@article {james1959stiefel,
    AUTHOR = {James, I. M.},
     TITLE = {Spaces associated with {S}tiefel manifolds},
   JOURNAL = {Proc. London Math. Soc. (3)},
  FJOURNAL = {Proceedings of the London Mathematical Society. Third Series},
    VOLUME = {9},
      YEAR = {1959},
     PAGES = {115--140},
      ISSN = {0024-6115,1460-244X},
   MRCLASS = {55.00},
  MRNUMBER = {102810},
MRREVIEWER = {W.\ S.\ Massey},
       DOI = {10.1112/plms/s3-9.1.115},
       URL = {https://doi.org/10.1112/plms/s3-9.1.115},
}

@incollection {lewis1987roggraded,
    AUTHOR = {Lewis, Jr., L. Gaunce},
     TITLE = {The {$R{\rm O}(G)$}-graded equivariant ordinary cohomology of
              complex projective spaces with linear {${\bf Z}/p$} actions},
 BOOKTITLE = {Algebraic topology and transformation groups ({G}\"{o}ttingen,
              1987)},
    SERIES = {Lecture Notes in Math.},
    VOLUME = {1361},
     PAGES = {53--122},
 PUBLISHER = {Springer, Berlin},
      YEAR = {1988},
      ISBN = {3-540-50528-8},
   MRCLASS = {55N91 (55N22)},
  MRNUMBER = {979507},
       DOI = {10.1007/BFb0083034},
       URL = {https://doi.org/10.1007/BFb0083034},
}

@misc{linwangxu2025lastkervaire,
      title={On the Last {K}ervaire Invariant Problem}, 
      author={Weinan Lin and Guozhen Wang and Zhouli Xu},
      year={2025},
      note={arxiv: 2412.10879},
      eprint={2412.10879},
      archivePrefix={arXiv},
      primaryClass={math.AT},
      url={https://arxiv.org/abs/2412.10879}, 
}

@misc{linwangxu2025machineproofs,
      title={Machine Proofs for {Adams} Differentials and Extension Problems among {CW} Spectra}, 
      author={Weinan Lin and Guozhen Wang and Zhouli Xu},
      year={2025},
      note={arxiv: 2412.10876}, 
      eprint={2412.10876},
      archivePrefix={arXiv},
      primaryClass={math.AT},
      url={https://arxiv.org/abs/2412.10876}, 
}

@article {lin1980on,
    AUTHOR = {Lin, Wen Hsiung},
     TITLE = {On conjectures of {M}ahowald, {S}egal and {S}ullivan},
   JOURNAL = {Math. Proc. Cambridge Philos. Soc.},
  FJOURNAL = {Mathematical Proceedings of the Cambridge Philosophical
              Society},
    VOLUME = {87},
      YEAR = {1980},
    NUMBER = {3},
     PAGES = {449--458},
      ISSN = {0305-0041},
   MRCLASS = {55Q10},
  MRNUMBER = {556925},
MRREVIEWER = {Donald M. Davis},
       DOI = {10.1017/S0305004100056887},
       URL = {https://doi.org/10.1017/S0305004100056887},
}

@article {lishiwangxu2019hurewicz,
    AUTHOR = {Li, Guchuan and Shi, XiaoLin Danny and Wang, Guozhen and Xu,
              Zhouli},
     TITLE = {Hurewicz images of real bordism theory and real
              {J}ohnson-{W}ilson theories},
   JOURNAL = {Adv. Math.},
  FJOURNAL = {Advances in Mathematics},
    VOLUME = {342},
      YEAR = {2019},
     PAGES = {67--115},
      ISSN = {0001-8708},
   MRCLASS = {55N22 (55Q91 55T15)},
  MRNUMBER = {3877362},
MRREVIEWER = {Hans-Werner Henn},
       DOI = {10.1016/j.aim.2018.11.002},
       URL = {https://doi.org/10.1016/j.aim.2018.11.002},
}

@misc{lili2026adams,
      title={The Adams differentials on the $e$-family}, 
      author={Runji Li and Yuxuan Li},
      year={2026},
      eprint={2602.20184},
      note={arxiv: 2602.20184},
      archivePrefix={arXiv},
      primaryClass={math.AT},
      url={https://arxiv.org/abs/2602.20184}, 
}

@article {ma2024geometricboundary,
    AUTHOR = {Ma, Sihao},
     TITLE = {A proof of the generalized geometric boundary theorem using
              filtered spectra},
   JOURNAL = {Topology Appl.},
  FJOURNAL = {Topology and its Applications},
    VOLUME = {355},
      YEAR = {2024},
     PAGES = {Paper No. 109006, 12},
      ISSN = {0166-8641,1879-3207},
   MRCLASS = {55T25 (55P43 55T05 55T15)},
  MRNUMBER = {4772506},
MRREVIEWER = {Sarah\ Whitehouse},
       DOI = {10.1016/j.topol.2024.109006},
       URL = {https://doi.org/10.1016/j.topol.2024.109006},
}

@article{ma2026borel,
      title={The {B}orel and genuine {$C_2$}-equivariant Adams spectral sequences}, 
      journal={To appear in Algebr. Geom. Topol.},
      author={Sihao Ma},
      year={2026},
      note={arxiv: 2208.12883},
      eprint={2208.12883},
      archivePrefix={arXiv},
      primaryClass={math.AT},
      url={https://arxiv.org/abs/2208.12883}, 
}

@article {mahowald1970orderimj,
    AUTHOR = {Mahowald, Mark},
     TITLE = {The order of the image of the {$J$}-homomorphisms},
   JOURNAL = {Bull. Amer. Math. Soc.},
  FJOURNAL = {Bulletin of the American Mathematical Society},
    VOLUME = {76},
      YEAR = {1970},
     PAGES = {1310--1313},
      ISSN = {0002-9904},
   MRCLASS = {55.40 (57.00)},
  MRNUMBER = {270369},
MRREVIEWER = {J.\ F.\ Adams},
       DOI = {10.1090/S0002-9904-1970-12656-8},
       URL = {https://doi.org/10.1090/S0002-9904-1970-12656-8},
}

@article {mahowald1982imageofj,
    AUTHOR = {Mahowald, Mark},
     TITLE = {The image of {$J$} in the {$EHP$} sequence},
   JOURNAL = {Ann. of Math. (2)},
  FJOURNAL = {Annals of Mathematics. Second Series},
    VOLUME = {116},
      YEAR = {1982},
    NUMBER = {1},
     PAGES = {65--112},
      ISSN = {0003-486X},
   MRCLASS = {55Q40},
  MRNUMBER = {662118},
MRREVIEWER = {Donald M. Davis},
       DOI = {10.2307/2007048},
       URL = {https://doi.org/10.2307/2007048},
}

@article {mahowaldravenel1993root,
    AUTHOR = {Mahowald, Mark E. and Ravenel, Douglas C.},
     TITLE = {The root invariant in homotopy theory},
   JOURNAL = {Topology},
  FJOURNAL = {Topology. An International Journal of Mathematics},
    VOLUME = {32},
      YEAR = {1993},
    NUMBER = {4},
     PAGES = {865--898},
      ISSN = {0040-9383},
   MRCLASS = {55Q45},
  MRNUMBER = {1241877},
MRREVIEWER = {Haynes\ R.\ Miller},
       DOI = {10.1016/0040-9383(93)90055-Z},
       URL = {https://doi.org/10.1016/0040-9383(93)90055-Z},
}

@article {may2020structure,
    AUTHOR = {May, Clover},
     TITLE = {A structure theorem for {$RO(C_2)$}-graded {B}redon
              cohomology},
   JOURNAL = {Algebr. Geom. Topol.},
  FJOURNAL = {Algebraic \& Geometric Topology},
    VOLUME = {20},
      YEAR = {2020},
    NUMBER = {4},
     PAGES = {1691--1728},
      ISSN = {1472-2747,1472-2739},
   MRCLASS = {55N91},
  MRNUMBER = {4127082},
MRREVIEWER = {Nansen\ Petrosyan},
       DOI = {10.2140/agt.2020.20.1691},
       URL = {https://doi.org/10.2140/agt.2020.20.1691},
}

@article {miller1981relations,
    AUTHOR = {Miller, Haynes R.},
     TITLE = {On relations between {A}dams spectral sequences, with an
              application to the stable homotopy of a {M}oore space},
   JOURNAL = {J. Pure Appl. Algebra},
  FJOURNAL = {Journal of Pure and Applied Algebra},
    VOLUME = {20},
      YEAR = {1981},
    NUMBER = {3},
     PAGES = {287--312},
      ISSN = {0022-4049,1873-1376},
   MRCLASS = {55T15 (55P42)},
  MRNUMBER = {604321},
MRREVIEWER = {J.\ F.\ Adams},
       DOI = {10.1016/0022-4049(81)90064-5},
       URL = {https://doi.org/10.1016/0022-4049(81)90064-5},
}

@article {radon1922lineare,
    AUTHOR = {Radon, J.},
     TITLE = {Lineare {S}charen orthogonaler {M}atrizen},
   JOURNAL = {Abh. Math. Sem. Univ. Hamburg},
  FJOURNAL = {Abhandlungen aus dem Mathematischen Seminar der Universit\"at
              Hamburg},
    VOLUME = {1},
      YEAR = {1922},
    NUMBER = {1},
     PAGES = {1--14},
      ISSN = {0025-5858,1865-8784},
   MRCLASS = {99-04},
  MRNUMBER = {3069384},
       DOI = {10.1007/BF02940576},
       URL = {https://doi.org/10.1007/BF02940576},
}

@article {sikora2022on,
    AUTHOR = {Sikora, Igor},
     TITLE = {On the {$RO(Q)$}-graded coefficients of
              {E}ilenberg-{M}ac{L}ane spectra},
   JOURNAL = {J. Homotopy Relat. Struct.},
  FJOURNAL = {Journal of Homotopy and Related Structures},
    VOLUME = {17},
      YEAR = {2022},
    NUMBER = {4},
     PAGES = {525--568},
      ISSN = {2193-8407,1512-2891},
   MRCLASS = {55P91 (20J05)},
  MRNUMBER = {4514125},
MRREVIEWER = {David\ Barnes},
       DOI = {10.1007/s40062-022-00314-x},
       URL = {https://doi.org/10.1007/s40062-022-00314-x},
}

@article {szymik2007equivariant,
    AUTHOR = {Szymik, Markus},
     TITLE = {Equivariant stable stems for prime order groups},
   JOURNAL = {J. Homotopy Relat. Struct.},
  FJOURNAL = {Journal of Homotopy and Related Structures},
    VOLUME = {2},
      YEAR = {2007},
    NUMBER = {1},
     PAGES = {141--162},
      ISSN = {1512-2891},
   MRCLASS = {55Q91 (55T15)},
  MRNUMBER = {2369156},
MRREVIEWER = {J.\ P. C. Greenlees},
}

@article {Voevodsky2003motivic,
    AUTHOR = {Voevodsky, Vladimir},
     TITLE = {Motivic cohomology with {${\bf Z}/2$}-coefficients},
   JOURNAL = {Publ. Math. Inst. Hautes \'{E}tudes Sci.},
  FJOURNAL = {Publications Math\'{e}matiques. Institut de Hautes \'{E}tudes
              Scientifiques},
    VOLUME = {98},
      YEAR = {2003},
     PAGES = {59--104},
      ISSN = {0073-8301,1618-1913},
   MRCLASS = {14F42 (12G05 19D45 19E15)},
  MRNUMBER = {2031199},
MRREVIEWER = {Eric\ M.\ Friedlander},
       DOI = {10.1007/s10240-003-0010-6},
       URL = {https://doi.org/10.1007/s10240-003-0010-6},
}
\bibliographystyle{alpha}
\endgroup

\end{document}